    \definecolor{mygreen}{rgb}{0,.6,0}
    \definecolor{myblue}{rgb}{0,0,.6}
    \definecolor{frank}{rgb}{1,.2,.2}
    \definecolor{stefan}{rgb}{.2,.7,.2}
    \definecolor{aaron}{rgb}{.2,.2,1}
\newcounter{FNC}[page]
\def\fauxfootnote#1{{\addtocounter{FNC}{2}$^\fnsymbol{FNC}$%
     \let\thefootnote\relax\footnotetext{$^\fnsymbol{FNC}$\Magenta{#1}}}}
 \numberwithin{equation}{section}
\newtheorem{theorem}{Theorem}[section]
\newtheorem{proposition}[theorem]{Proposition}
\newtheorem{lemma}[theorem]{Lemma}
\newtheorem{notation}[subsection]{Notation}
\newtheorem{defi}[theorem]{Definition}
\newtheorem{exam}[theorem]{Example}
\newtheorem{rema}[theorem]{Remark}
\newenvironment{definition}[1][]{\rm\begin{defi}[#1]\rm}{\end{defi}}
\newcommand{\C}{\mathbb{C}}
\def\ssym{\s\mathit{Sym}}
\def\csym{\c\mathit{Sym}}
\def\s{\mathfrak S}
\def\y{\mathcal Y}
\def\c{\mathfrak C}
\def\KK{{\mathbb K}}
\def\ot{\otimes}
\def\ov{\overline}
\def\ot{\otimes}
\def\Dot2{\ov{\D_m^+\ot \D_m^+}}
\def\Dot2{\ov{\D_m^+\ot \D_m^+}}
\def\St{\mbox {St}}
\newcommand{\Placeholder}%
    {\raisebox{.09ex}{\footnotesize $\bullet$}}
\newcommand{\placeholder}%
    {\raisebox{.04ex}{\tiny $\bullet$}}
\newcommand{\Fmn}{F_{m,n}}
\def\psplit{\stackrel{\curlyvee}{\to}}
\newcommand{\tubeA}{\raisebox{-.5ex}{\includegraphics{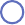}}}
\newcommand{\tubeB}{\raisebox{-.6ex}{\includegraphics{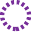}}}
\newcommand{\tubeC}{\raisebox{-.6ex}{\includegraphics{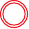}}}
\newcommand{\D} {\Delta}
\newcommand{\K} {\mathcal{K}}                           
\newcommand{\J} {\mathcal{J}}
\newcommand{\KG} {{\K} G}
\newcommand{\JG} {{\J} G}                
\newcommand{\JGr} {\JG_r}
\newcommand{\JGd} {\JG_d}
\author{Lisa Berry} \address{
    Bio-Med Science Academy,\\
    Rootstown, OH
    }
    \email{LBerry114@gmail.com}
\author{Stefan Forcey} \address{
    Department of Mathematics\\
    The University of Akron\\
    Akron, OH 44325-4002
    }
    \email{sf34@uakron.edu}  \urladdr{http://www.math.uakron.edu/\~{}sf34/}
    \author{Maria Ronco} \address{
    Department of Physics and Mathematics\\
    The University of Talca\\
    Talca, Chile.  }
 \author{Patrick Showers} \address{
    Department of Mathematics\\
    The University of Akron\\
    Akron, OH 44325-4002
    }
\title[Polytopes and Trees]{Species substitution, graph suspension, and graded Hopf algebras of painted tree polytopes.}
\begin{document}

\begin{abstract}
Combinatorial Hopf algebras of trees exemplify the connections
between operads and bialgebras. Painted trees were introduced
recently as examples of how graded Hopf operads can bequeath
Hopf structures upon compositions of coalgebras. We put these
trees in context by exhibiting them as the minimal elements of face
posets of certain convex polytopes. The full face posets themselves
often possess the structure of graded Hopf algebras (with one-sided
unit). We can enumerate faces using the fact that they are structure types of
substitutions of combinatorial species. Species considered
here include ordered and unordered binary trees and ordered lists
(labeled corollas). Some of the polytopes that constitute our main
results are well known in other contexts. First we see the classical
permutahedra, and then certain \emph{generalized permutahedra}:
specifically the graph-associahedra
 of suspensions of certain simple graphs. As an aside we show that
 the stellohedra also appear as \emph{liftings} of generalized permutahedra: graph composihedra for complete graphs.
  Thus our results give examples of Hopf algebras of tubings and marked tubings of graphs. We also show an alternative associative
algebra structure on the graph tubings of star graphs.

\includegraphics[width=4.83in]{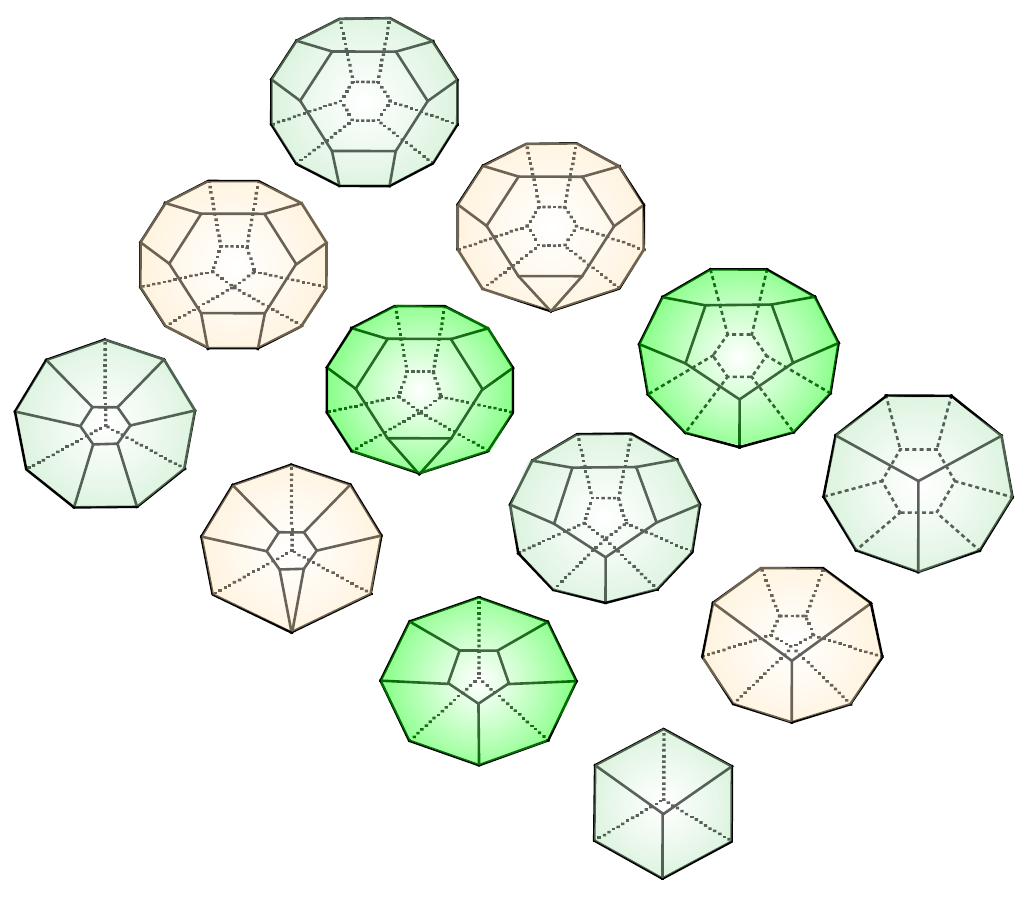}
\end{abstract}

\maketitle

\section{Introduction}
The mathematical operation of grafting trees, root to leaf, is a key
feature in the structure of several important operads and Hopf
algebras. Loday and Ronco initiated the study of these type of structures when they found a Hopf algebra of plane
binary trees in \cite{LR}. The Loday-Ronco algebra is based on the vertices of Stasheff's associahedra, the polytopes that
model homotopy associative spaces.

  There is a surjection from permutations to plane binary
trees: the Tonks projection, defined in \cite{tonks}, from the permutohedron to the associahedron.
 Chapoton found that the Hopf algebras of vertices of these polytopes are subalgebras of
larger ones based on the faces of the respective polytopes
\cite{chap}. Chapoton's algebras are the  differential graded
structures  corresponding to algebras described by Loday and Ronco
in \cite{LR3}. Using that
same surjection on basis elements, the Loday-Ronco algebra is the image
of the Malvenuto-Reutenauer Hopf algebra of permutations \cite{MR}.
There is also a projection from the Loday-Ronco Hopf algebra to the
algebra of quasisymmetric polynomials. The authors of \cite{LR}
showed Hopf algebra maps which factor the descent map from
permutations to Boolean subsets.

 In \cite{LR2} the authors describe the product of planar binary trees in terms of the Tamari order.
  In 2005 and 2006 Aguiar and Sottile characterized operations in these algebras by using M\"obius
functions (of the Tamari order and of the weak Bruhat order) to
obtain new bases, in respectively \cite{AS-LR},\cite{AS-MR}. Their
work gave a nice way to construct a basis of primitive elements,
using the irreducible trees.  In \cite{ForSpr:2010} the authors
characterize the same operations in terms of inclusions (into the
larger polytopes) of products of polytope faces.

 Alternatively, since the Loday-Ronco algebra is self-dual, it can
project to the divided power Hopf algebra. In \cite{FLS3} the
authors used the following notation: $\s Sym$ for the
Malvenuto-Reutenauer Hopf algebra,
   $\y Sym$ for the Loday-Ronco Hopf algebra, and $\c Sym$ for the divided power Hopf algebra. They defined the idea of
grafting with two colors, preserving the colors after the graft in
order to have two-tone, or painted, trees with various structures
possible in each colored region. Here we review the definitions,
adding some generality and defining poset structures on each set of
painted trees. We extend the coalgebra structure to twelve new
vector spaces, and we extend the Hopf algbra structure to nine of
those. We are also able to conclude that eight of the new coalgebras
defined have underlying geometries of polytope sequences.

The \emph{stellohedra}, or star-graph-associahedra, were first
defined using the latter terminology by Carr and Devadoss in
\cite{dev-carr}.  The former terminology was introduced in
\cite{post2}, where these polytopes were studied as special cases of
nestohedra. In \cite{FLS1} the 3-dimensional version of the stellohedron
appears graphically,
 as the domain
and range quotient of the multiplihedra for the complete graphs.
These quotients are the \emph{composihedra} and \emph{cubeahedra}
respectively, but this source does not identify them as stellohedra.
Also in \cite{FLS1} it is claimed without proof that grafted trees
represent these quotients in all dimensions, although the
corresponding trees in that source are associated in error to the
wrong polytope. (We correct the mistake here; compare our
Figures~\ref{pgdiamond} and~\ref{bigpoly_cccc_ext_lit} to Figures 3
and 4 of \cite{FLS1}.)

 In \cite{pilaud2} the authors do actually
prove that the stellohedra for all dimensions are in fact the
cubeahedra of complete graphs (which we will review). Also in
\cite{pilaud2} the stellohedron of dimension $n$ is recognized as
the secondary polytope of pairs of nested concentric $n$-dimensional
simplices. The stellohedra have also been seen as special cases of
signed-tree associahedra in \cite{vincent2}.

\subsection{ Main Results }

Our algebraic results are twelve new graded coalgebras of painted
trees, as described in Theorem~\ref{coalg}. Nine of those contain as
subalgebras the cofree graded coalgebras defined in \cite{FLS3},
shown here in Figure~\ref{pdiamond}. Eight of our new coalgebras
also possess new one-sided Hopf algebra structures, some in multiple
ways, as seen in Theorem~\ref{hopf_alg}.  See
Table~\ref{restab} for reference. Eleven of these new coalgebras are based on the structure types of substitutions (partitional compositions) of
species: nine seen in Theorem~\ref{species} (and the last two by bijection), leaving only one example without a way to calculate numbers of faces. For instance, the composihedra faces are counted by the ordinary generating function
$$(Y \circ L_+\circ L_+ )(x) = \frac{2x}{1 - x + \sqrt{1 - 10x + 17x^2}}   $$
$$=x + 3 x^2 + 11 x^3 + 49 x^4 + 253 x^5 + 1439 x^6 +\dots.$$

\begin{table}
\noindent
 \resizebox{\textwidth}{!}{
\begin{tabular}{||c|c||c|c|c|c|c|c||}
\hline\hline
$C=$& $D=$ & New  & New  & Face  & Composition  & Tubings  & Lifted \\
rooted&rooted& graded & Hopf & poset of & (Substitution)  &  of Graph  & Gen.\\
forests&trees&Coalgebra&Algebra&polytopes& of Species &Suspension& Perm.\\
\hline \hline
Corollas & Corollas & x & x & x& x & & x\\
\cline{2-8}  & Plane & x & x & x  & x & & x\\
\cline{2-8}
 & Weak order & x & x & x & x & x & x\\
\cline{1-8}
Plane & Corollas & x & x & x& x & & x \\
\cline{2-8} trees & Plane & x & x & x  & x &  & x\\
\cline{2-8}
 & Weak order & x & x & x & x & x & x\\
\cline{1-8}
Weakly   & Corollas & x & x & conj. & x && \\
\cline{2-8} ordered  & Plane & x & x & conj.  & x & & \\
\cline{2-8} trees & Weak order & x & conj. & conj. & x & &\\
\cline{1-8}
Weakly  & Corollas & x & x & x & x & x & x\\
\cline{2-8} ordered  & Plane & x & conj. & conj.  &  & & \\
\cline{2-8}forest & Weak order & x & conj. & x & x & x & x\\
 \hline\hline
\end{tabular}
 }\caption{Summary of known results about the sets $\widetilde{C/D}$, organized by tree
types as seen in Figure~\ref{pgdiamond}.}\label{restab}
\end{table}

We show that  eight sequences of our 12 sets of painted trees, with
defined relations, are isomorphic as posets to face lattices of
convex polytopes. Six of these are in the collection of Hopf
algebras just mentioned. Four of these isomorphisms are well known
from previous work: the associahedra, multiplihedra, composihedra
and cubes. Four of our new coalgebras are based on tubings of suspensions of simple graphs.
In Theorem~\ref{perma} we show that weakly ordered
forests grafted to weakly ordered trees are isomorphic to the
permutohedra. In Theorem~\ref{stella1} we show that forests of
corollas grafted to weakly ordered trees are isomorphic to the
star-graph-associahedra, or stellohedra. In Theorem~\ref{stella2}
and Theorem~\ref{stella3} we show that weakly ordered forests
grafted to a corolla are also isomorphic to stellohedra. In Theorem
~\ref{ptera} we show that forests of plane trees grafted to weakly
ordered trees are isomorphic to the fan-graph-associahedra, or
pterahedra. In Theorem~\ref{lift} we show that the stellohedra again
appear as graph-composihedra for the complete graphs.

In Section~\ref{def} we define the sets of trees and the surjective
functions between those sets. In Section~\ref{hopf} we define the
operations on our trees and explain which sets are graded coalgebras
and which are Hopf algebras. We give examples of products,
coproducts, and antipodes.

In Section~\ref{poly} we define a partial ordering of painted trees
and show which of our posets of trees represent combinatorial
equivalence classes of polytopes. In Section~\ref{s:alg} we describe
our Hopf algebra of faces of the stellohedra using graph tubings. In
Proposition~\ref{assofstell} we show that a new, less forgetful,
product on vertices of the stellohedra is associative.

\section{Definitions}\label{def}

Graphs with unlabeled vertices are isomorphism classes of graphs. In
this paper, {\it trees} are unlabeled, connected, acyclic, simple
graphs. A {\it rooted } tree is an oriented tree having one maximal
vertex or node, called the {\it root}. For any node $v$ of a tree,
the edges oriented towards $v$ are called {\it inputs} of $v$ and
the edges leaving from $v$ are called {\it outputs} of $v$. We
denote by ${\mbox {In}(v)}$ the set of inputs of $v$, and by ${\mbox
{Out}(v)}$ the set of outputs of $v$.

We denote by ${\mbox {Nod}(t)}$ the set of nodes of a tree $t$. All
the trees we work with satisfy that $\vert {\mbox {In}(v)}\vert >1$
and $\vert {\mbox {Out}(v)}\vert = 1$. We admit edges which are
linked to a unique node, one of them is the output of the root, the
others are called {\it leaves}. The {\it degree} of a tree is the
number of its leaves minus $1$.

We use the following terms:
\begin{itemize}
\item A {\it plane} tree is a rooted tree satisfying that
 the set ${\mbox {In}(v)}$ is totally ordered, for any node $v$. Sometimes this is also referred to as {\it planar}, and can be equivalently
 satisfied by requiring the leaves to lie in one horizontal line, in order, and the root at a lesser $y$-value.
\item A {\it binary} tree is a rooted tree such that $\vert {\mbox {In}(v)} \vert = 2$, for any node $v$.
\end{itemize}

An example of \emph{plane rooted binary tree}, often called a binary
tree when the context is clear, is the following, where the
orientation of edges is higher to lower on the plane:
\[
    \includegraphics[width = 3in]{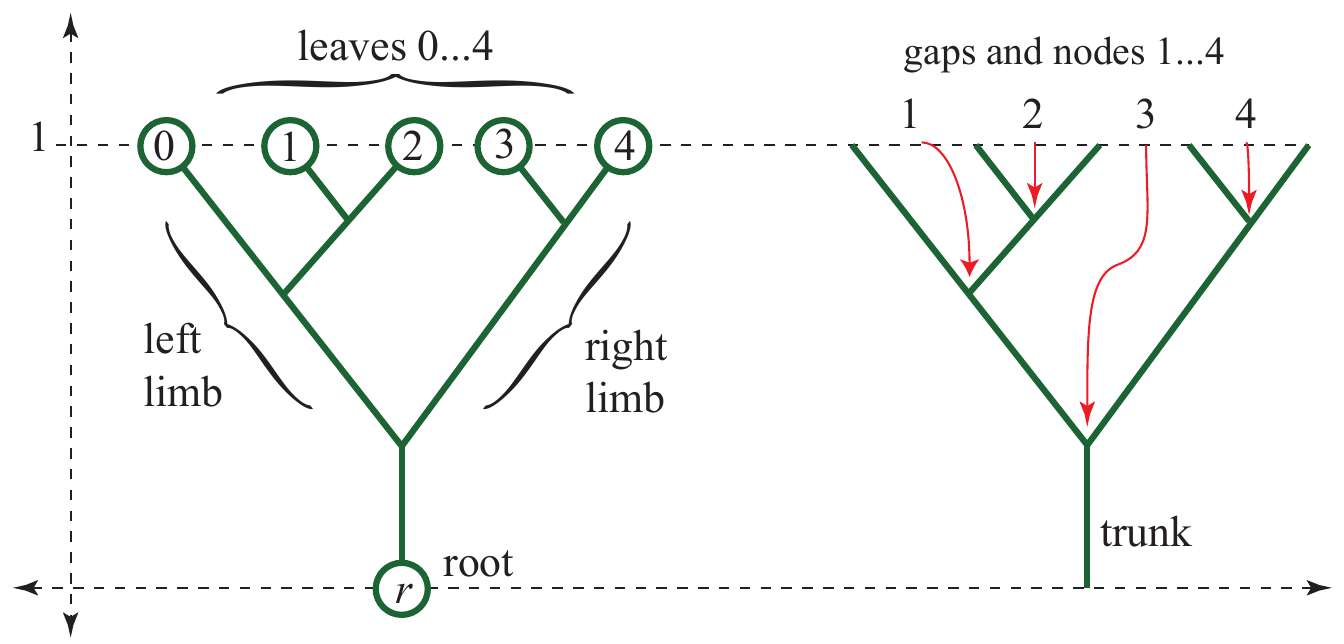}
\]

The leaves are ordered left to right as shown by the circled labels.
The horizontal node ordering  corresponds to the order of \emph{gaps
between leaves}: the $n^{th}$ gap is just to the left of the
$n^{th}$ leaf and the $n^{th}$ node is the one where a raindrop
would be caught which fell in the $n^{th}$ gap.  This ordering is
also described as a depth first traversal of the nodes. Non-leafed
edges are referred to as internal edges. The set of plane rooted
binary trees with $n$ nodes and $n+1$ leaves is denoted $\y_n.$ The
cardinality of these sets are the Catalan numbers:$$|\y_n| =
\frac{1}{n+1}{2n \choose n}.$$

We will also need to consider rooted plane trees whose vertices, or
nodes, have more than two inputs. We denote by ${\mathcal T}_n$ the
set of all plane rooted trees with $n+1$ leaves. The cardinal of
${\mathcal T}_n$ is the $n^{th}$ super-Catalan number (also called
the little Schr\" oder number).

 An $(n+1)$-leaved rooted tree with only
one node (it will have degree $n+2 \ge 3$), or, for $n=0,$ a single
leaf tree with zero nodes, is called a \emph{corolla}, denoted
$\c_n.$ This notation for the (set of one) corolla with $n+1$ leaves
is the same as used for the set of one \emph{left comb} in
\cite{FLS3}. In the current paper we have decided that the corollas
are more easily recognized than the combs.

\subsection{Ordered and painted plane trees}\label{F_sec: bi-leveled trees}
Many variations of the idea of the plane tree have proven useful in
applications to algebra and topology.

\begin{notation}\label{[n]} For any positive integer $n\geq 1$,
we denote by $[n]$ the ordered set $\{1, 2,\dots ,n\}$, and by
$[n]_0$ the set $\{0\}\cup [n]$.\end{notation}

\begin{defi} An \emph{ordered tree}
(sometimes called \emph{leveled}) is a plane rooted tree $t$,
equipped with a vertical linear ordering of ${\mbox {Nod}(t)}$, in
addition to the horizontal one. That is, an ordered tree is a plane
rooted tree $t$ equipped with a bijective map $L: {\mbox {Nod}(t)}
\longrightarrow [\vert{\mbox {Nod}(t)} \vert ]$, which respects the
order given by the vertical order. Clearly $L({\mbox {root}}) =
\vert{\mbox {Nod}(t)} \vert $. \end{defi}

This vertical linear ordering extends the partial vertical ordering
given by distance from the root.  This vertical ordering allows a
well-known bijection between the ordered trees with $n$ nodes,
denoted $\s_n,$ and the permutations on $[n].$

We may draw an ordered tree in three different styles:

\includegraphics[width=4.7in]{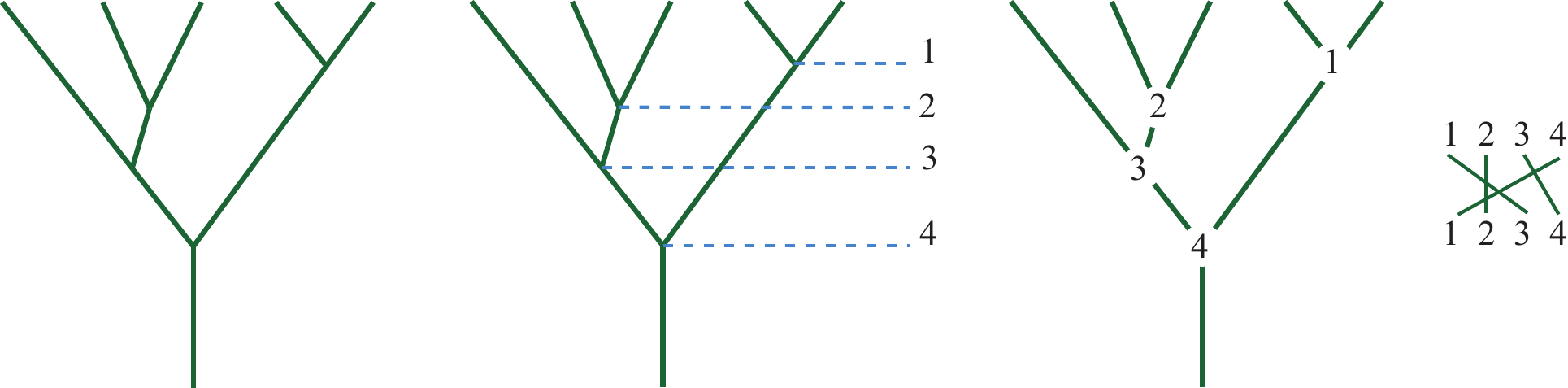}

The corresponding permutation in the above picture is $\sigma  =
(3,2,4,1)$, in the notation
$(\sigma(1),\sigma(2),\sigma(3),\sigma(4).)$


We will also consider \emph{forests} of trees. In this paper, all
forests will be a linearly ordered list of trees, drawn left to
right. This linear ordering can also be seen as an ordering of all
the nodes of the forest, left to right. On top of that, we can also
order all the nodes of the forest vertically, giving a
\emph{vertically ordered forest}, which we often shorten to
\emph{ordered forest}. This initially gives us four sorts of forests
to consider, shown in Figure~\ref{forest_forget}.

\begin{figure}[hb!]
\begin{center}
\includegraphics[width=4.7in]{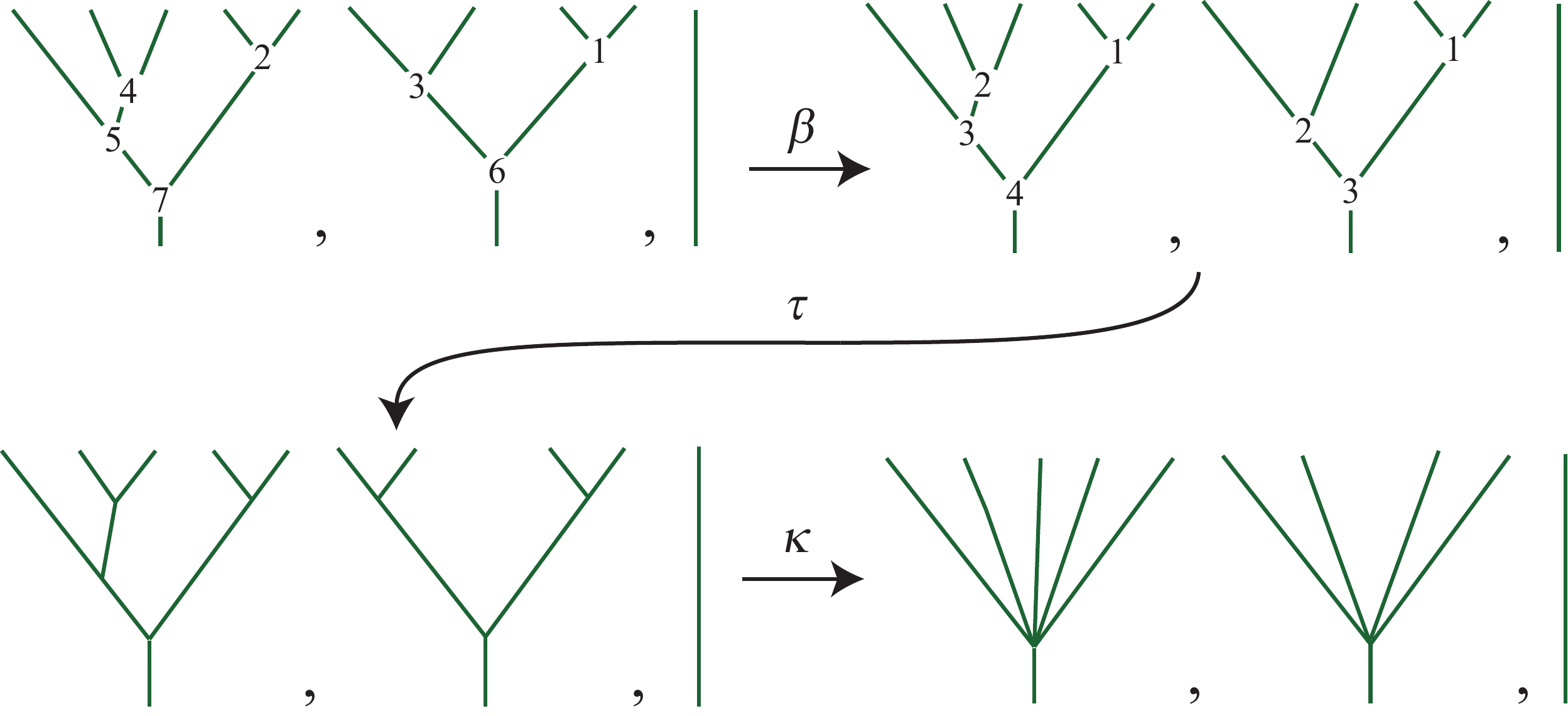}
\caption{Following the arrows: a (vertically) ordered forest, a
forest of ordered trees, a forest of binary trees, and a forest of
corollas.}\label{forest_forget}
\end{center}
\end{figure}

Also shown in Figure~\ref{forest_forget} are three canonical,
forgetful maps between the types of forests.
\begin{defi}
 We define $\beta$ to
be the function that takes an ordered forest $F$ and gives a forest
of ordered trees. The output $\beta(F)$ will have the same list of
trees as $F$, and for a tree $t$ in $\beta(F)$  the vertical order
of the nodes of $t$ will respect the vertical order of the nodes in
$F.$ That is, for two nodes $a,b$ of $t$ we have $a\le b$ in $t$ iff
$a\le b$ in $F.$

We define $\tau$ to be the function that takes an ordered tree and
outputs the tree itself, forgetting all of the vertical ordering of
nodes (except for the partial ordering based on distance from the
root.) We define $\kappa$ to be the function that takes a tree and
gives the corolla with the same number of leaves.
\end{defi}

Note that $\tau$ and $\kappa$ are immediately both functions on
forests, simply by applying them to each tree in turn. Also note
that $\tau$ and $\kappa$ are described in \cite{FLS3}, but that
there $\kappa$ yields a left comb rather than a corolla.

Now we define larger sets of trees that generalize the binary ones.
First we drop the word binary; we will consider plane rooted trees
with nodes that have any degree larger than two. Then, from the
non-binary vertically ordered trees we further generalize by
allowing more than one node to reside at a given level.
 Instead of
corresponding to a permutation, or total ordering, these trees will
correspond to an ordered partition, or weak ordering, of their
nodes.
\begin{defi}
 A \emph{weakly ordered tree} is a plane rooted tree with a weak
 ordering of its nodes that respects the partial order of proximity
 to the root.
\end{defi}

Recall that this means all sets of nodes are comparable--but some
are considered as tied when compared, forming a block in an ordered
partition of the nodes. The linear ordering of the blocks of the
partition respects the partial order of nodes given by paths to the
root.

For a weakly ordered tree with $n+1$ leaves the ordered partition of
the nodes determines an ordered partition of  $S=\{1,\dots,n\}$, as
described in \cite{tonks}. Here we see $S$ as the set of gaps
between leaves. (Recall that a gap between two adjacent leaves
corresponds to the node where a raindrop would eventually come to
rest; $S$ is partitioned into the subsets of gaps that all
correspond to nodes at a given level.) Weakly ordered trees are
drawn using nodes with degree greater than two, and using numbers
and dotted lines to show levels.

\includegraphics[width=\textwidth]{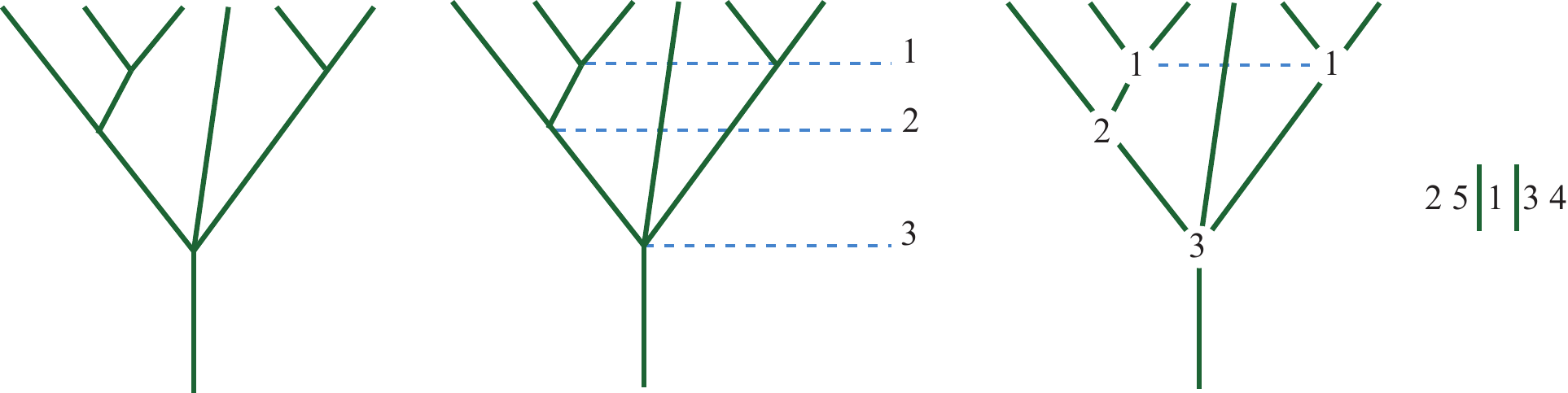}

 The ordered partition corresponding to the above pictures is $(\{2,5\},\{1\},\{3,4\}).$
  Note
that an ordered tree is a (special) weakly ordered tree.


 As well as
forests of weakly ordered trees we also consider weakly ordered
forests.
  This gives
us three more sorts of forests to consider, shown in
Figure~\ref{forest_forget_gen}. As indicated in that figure, the
maps $\beta, \tau$  and $\kappa$ are easily extended to forests of
the non-binary and/or weakly ordered trees: $\beta$ forgets the weak
ordering of the forest to create a forest of weakly ordered trees,
$\tau$ forgets the weak ordering, and $\kappa$ forgets the partial
order to create corollas.

\begin{figure}[hb!]
\begin{center}
\includegraphics[width=4.7in]{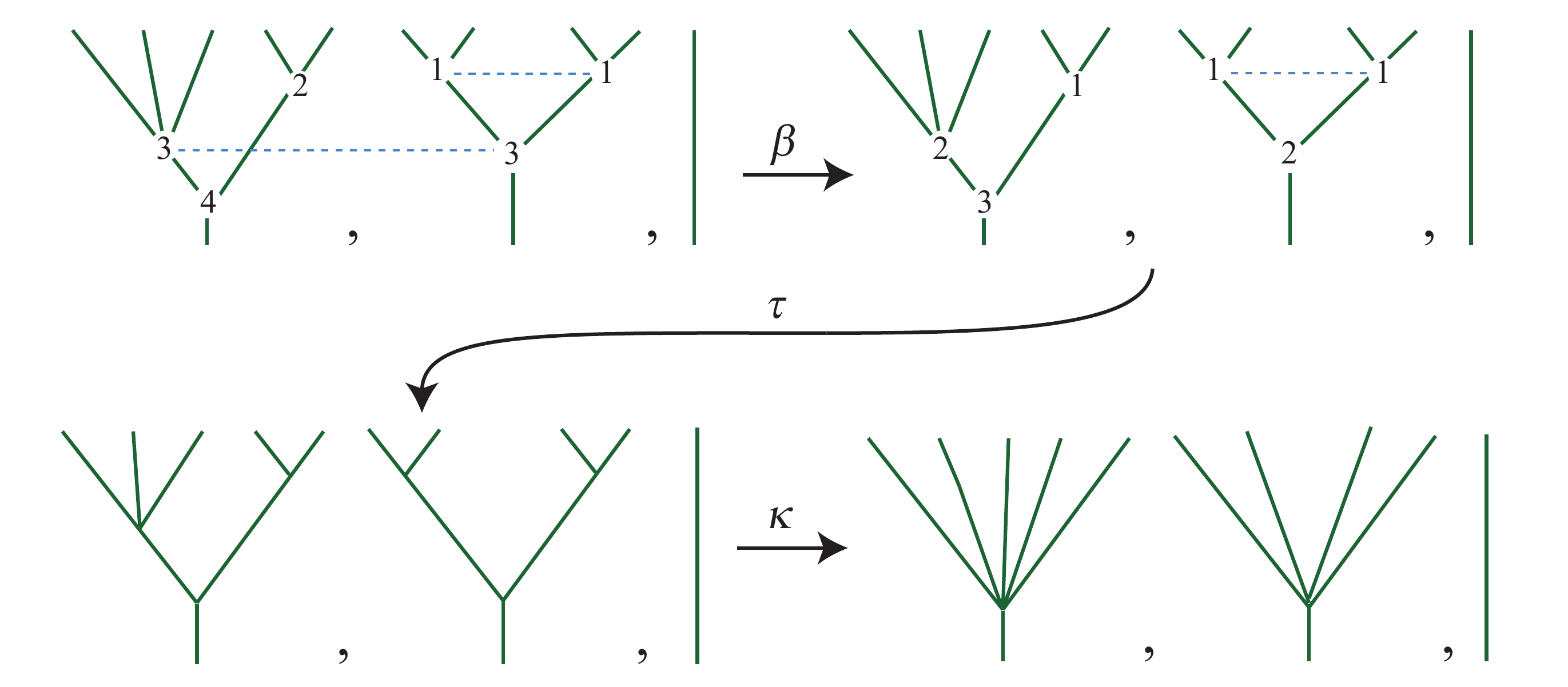}
\caption{Following the arrows: a (vertically) weakly ordered forest,
a forest of weakly ordered trees, a forest of plane rooted trees,
and a forest of corollas. Note that the forests in
Figure~\ref{forest_forget} are special cases of
these.}\label{forest_forget_gen}
\end{center}
\end{figure}

 The trees we focus on in this paper generalize those introduced in \cite{FLS3}. They are constructed by grafting together combinations of ordered
trees, binary trees, and corollas.  Visually, this is accomplished
by attaching the roots of one of the above forests $w\in C$ to the
leaves of one of the above types of trees $v\in D$, but remembering
the originals $w$ and $v$. The result is denoted $w/v \in C/D.$  We
use two colors, which we refer to as ``painted'' and ``unpainted.''
The forest is described as unpainted, and the base tree (which the
forest is grafted to) is painted. At a graft the leaf is identified
with the root, and in the diagram that point is no longer considered
a node, but is rather drawn as a change in color (and thickness, for
easy recognition) of the resulting edge. (Note that in some papers
such as \cite{multi} our mid-edge change in color is described
instead as a new node of degree two.)

With regard to the partial ordering of nodes by proximity to the
root (with the closest to the root being least), we can describe a
painted tree as having a distinguished order ideal of painted nodes.

We refer to the result as a \emph{(partly) painted tree}, regardless
of the types of upper (unpainted) and lower (painted) portions.
Notice that in a painted tree the original trees (before the graft)
are still easily observed since the coloring creates a boundary,
called the \emph{paint-line} halfway up the edges where the graft
was performed. Thus the paint line separates the painted tree into a
single tree of one color and a forest of trees of another color. In
Figure~\ref{pdiamond} we show all 12 ways to graft one of our types
of partially ordered forest with one of our types of tree.

\begin{defi}The maps $\beta, \tau$ and $\kappa$ are now extended to the painted
trees, just by applying them to the unpainted forest and/or to the
painted tree beneath. We indicate this by writing a fraction:
$\displaystyle{\frac{f}{g}}$ for two of our three maps, or the identity map, as
seen in Figure~\ref{pdiamond}. That is, $\displaystyle{\frac{f}{g}}$ indicates
applying $f$ to the forest and $g$ to the painted base tree, for
$f,g\in\{\beta,\tau,\kappa,1\}$.
\end{defi}

\begin{figure}[ht!]
\includegraphics[width=\textwidth]{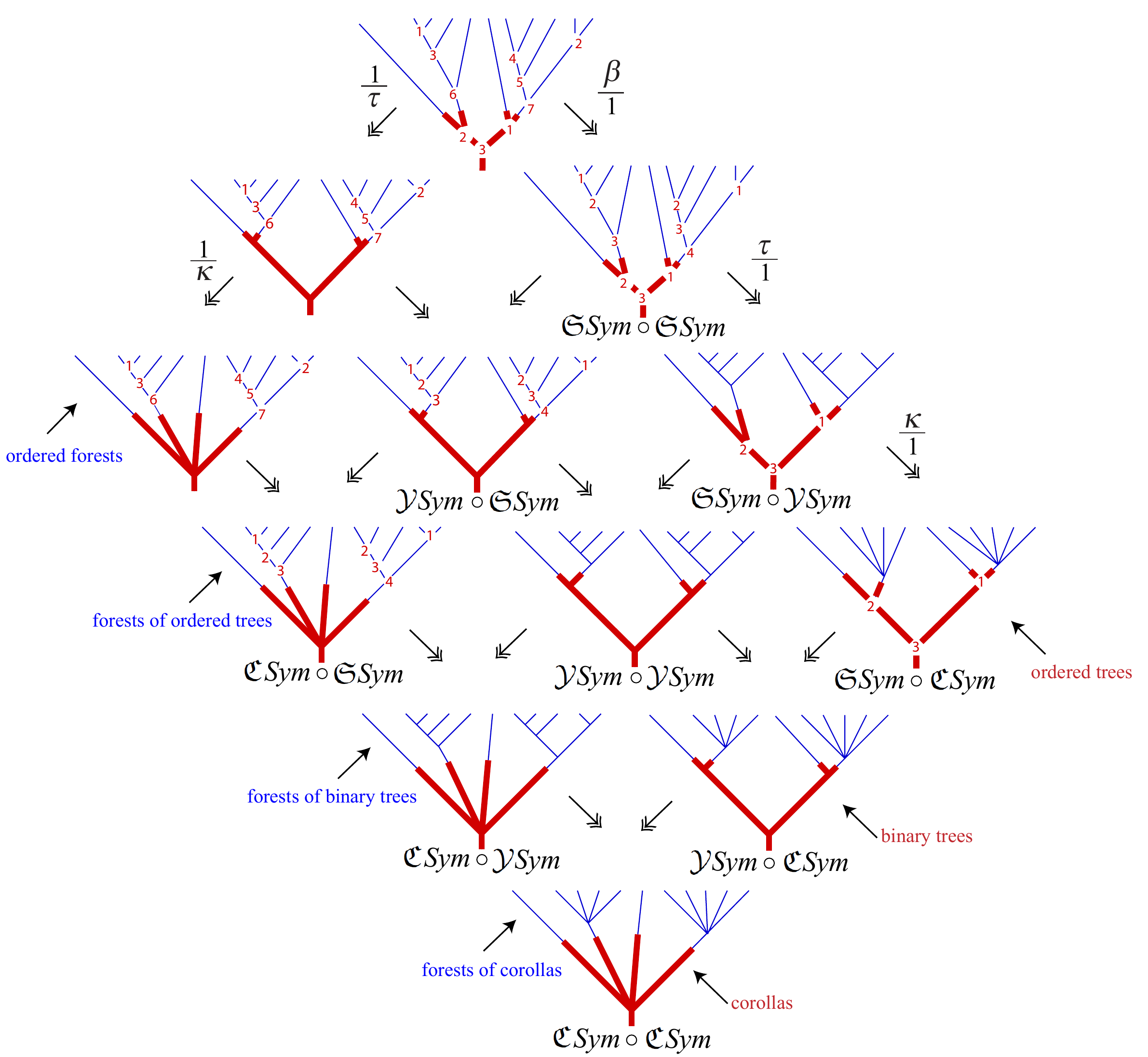}
\caption{Varieties of grafted, painted trees. Each diagonal shares a
type of tree on the bottom (painted) or a type of forest grafted on,
as indicated by the labels. Nine are labeled with the compositions
of coalgebras from \cite{FLS3}.  Trees in this figure correspond to
vertex labels of the 10-dimensional polytopes in sequences whose
3-dimensional versions are shown in
Example~\ref{bigpoly_cccc_ext_lit}. The forgetful maps are shown
with example input and output. Parallel arrows all denote the same
map, except of course that the identity is context
dependent.}\label{pdiamond}
\end{figure}

\subsection{General painted trees.}
Now our definition of painted trees is expanded to include any of our types of
forest grafted to any of our types of tree.
 On top of that we will also permit a further
broadening of the allowed structure of our painted trees. The
paint-line, where the graft occurs, is allowed to coincide with
nodes, where branching occurs. We call it a \emph{half-painted
node}. In terms of the grafting of a forest onto a tree our
description depends on the type of forest. If the forest is weakly
ordered, or is a forest of weakly ordered trees, then we see each
half-painted node as grafting on a single tree at its least node,
after removing its trunk and root. If the forest is only partially
ordered (i.e. of binary trees or corollas) then we see the
half-painted nodes as (possibly) several roots of several trees
simultaneously grafted to a given leaf. For a choice $C$ of forests
and a choice $D$ of trees, the resulting general trees are denoted
$\widetilde{C/D}.$ See the examples in Figures~\ref{pgdiamond}
and~\ref{detail}.

%

\begin{figure}[ht!]
\includegraphics[width=\textwidth]{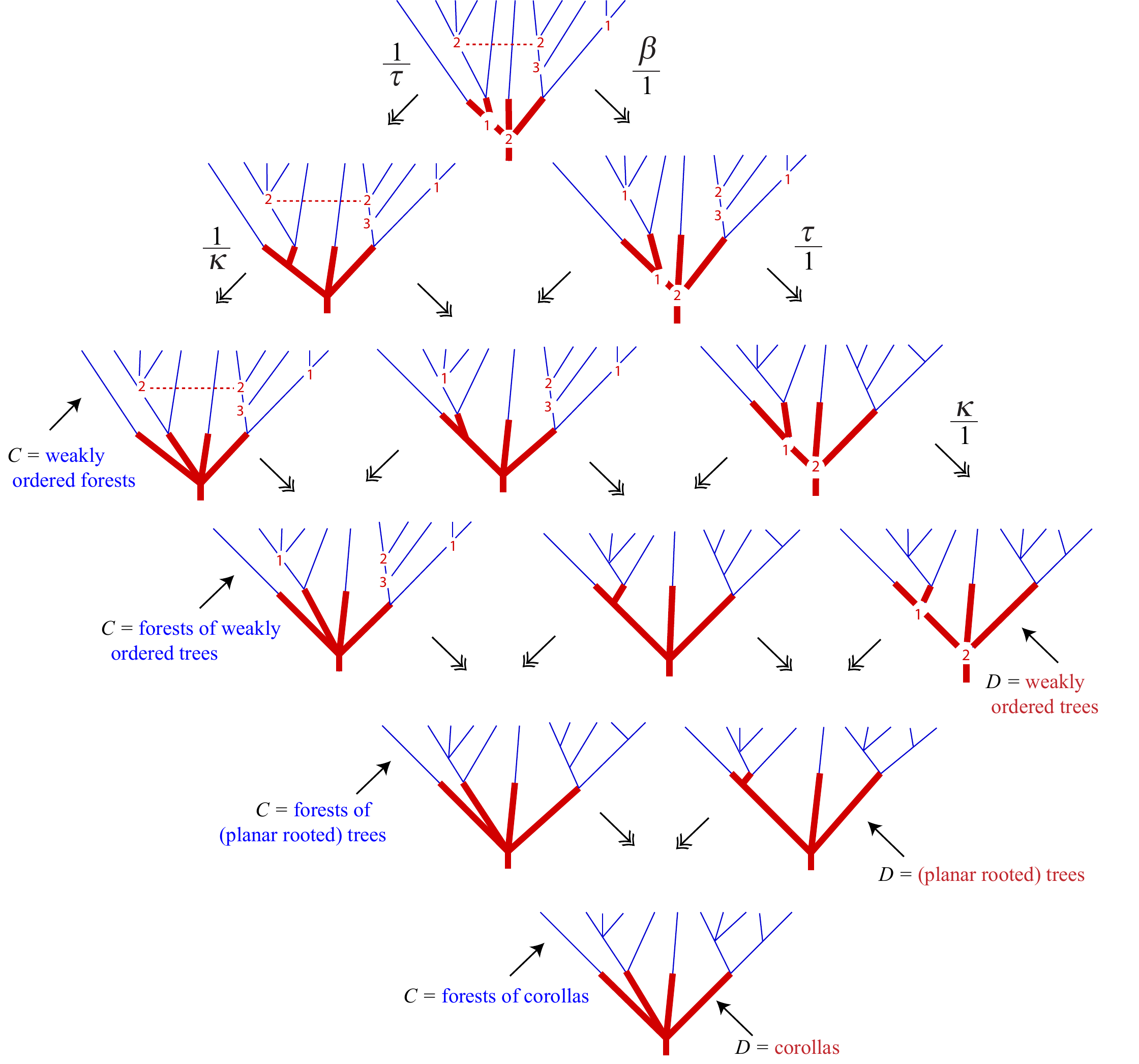}
\caption{General varieties of grafted, painted trees
$\widetilde{C/D}$. For those proven to be polytopes in
Section~\ref{poly}, these correspond to face labels of the
10-dimensional polytope sequences whose 3-dimensional versions are
shown in Example~\ref{bigpoly_cccc_ext_lit}. Parallel arrows all
denote the same map. Note that the trees in Figure~\ref{pdiamond}
are special cases---vertex trees, or minimal in the face
lattice---of the types illustrated in this figure.
}\label{pgdiamond}
\end{figure}

\begin{figure}[ht!]
  \includegraphics[width=4in]{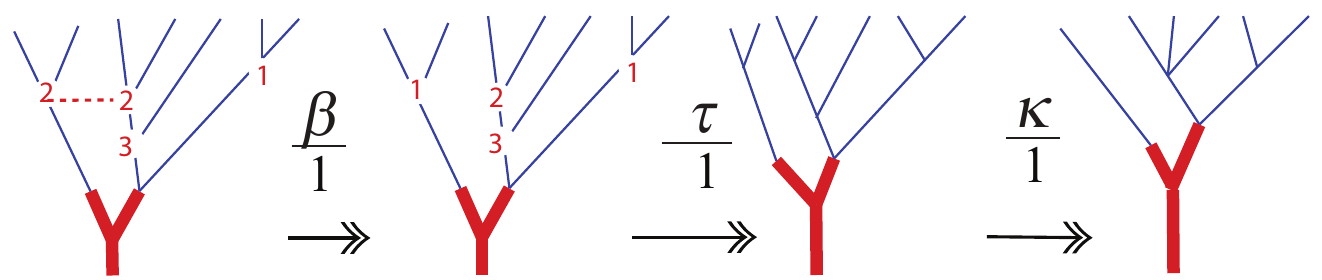}
\caption{Action of the projections, detail from
Figure~\ref{pgdiamond}. At first on the left there is a weakly ordered
forest. Then there are weakly ordered trees, one attached at each half-painted node. Next there are two or three plane trees attached at the paint line;
 we can see the righthand half painted node  as incident with either one or two trees
without any contradiction. Finally there is a forest of 3 separate corollas. }\label{detail}
\end{figure}

For these general painted trees we can again extend the
``fractional'' maps using $\beta, \tau$ and $\kappa.$ We reiterate
from above how the half-painted nodes are interpreted, since that
determines the input for the ``numerator'' map. Specifically
$\displaystyle{\frac{\beta}{g}}$ operates by taking as input for $\beta$ the weakly
ordered forest of trees, one tree for each half-painted node. That
is, $\displaystyle{\frac{\beta}{g}}$ treats the half-painted nodes as being the
location of  a single tree that is grafted on without a trunk. This
description is the same for $\displaystyle{\frac{\tau}{g}}.$ In contrast however,
the map $\displaystyle{\frac{\kappa}{g}}$ takes as input the forest found by
listing all the unpainted trees while assuming each has a visible
trunk, some of which are simultaneously grafted at the same
half-painted node. Examples of these maps are shown in
Figure~\ref{pgdiamond}, where we show 12 general painted trees that
consist of one of the four general types of
 forest and
  one of the three general types of trees.
 Figure~\ref{detail} is a detail from Figure~\ref{pgdiamond} showing how the
  actions of the projections differ.

\section{Hopf Algebras}\label{hopf}

Let $\KK $ denote a field. For any set $X$, we denote by $\KK[X]$
the $\KK$-vector space spanned by $X$. As in \cite{FLS3} we work
over a fixed field of characteristic zero, and our vector spaces
will be constructed by using the sets of trees as graded bases.

Recall from \cite{FLS3} the concept of splitting a tree, given a
multiset of its leaves. Here, modified from an example in
\cite{FLS1}, is a 4-fold splitting into an ordered list of 5 trees:

\includegraphics[width=\textwidth]{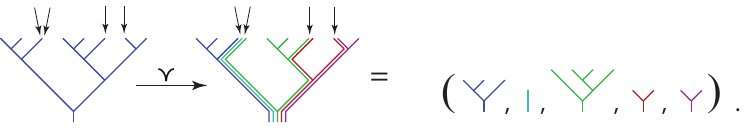}

Also recall the process of grafting an ordered forest to the leaves
of a tree:

\includegraphics[width=\textwidth]{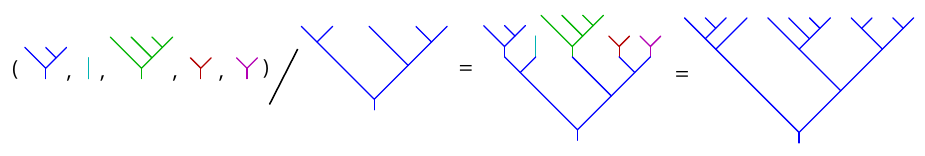}

In \cite{FLS3} there are defined coproducts on nine of the families
of painted trees shown in Figure~\ref{pdiamond} (the ones with
labels denoting their membership in a composition of coalgebras).
Eight of these, all but the composition of coalgebras $\ssym \circ
\ssym$, are shown to possess various Hopf algebraic structures in
\cite{FLS3}. Now we show which of those structures can be extended
to our generalized painted trees in Figure~\ref{pgdiamond}.

The Hopf algebras we are interested in first are the algebra of
corollas, called $\c Sym$ and shown to be identical to the divided
power Hopf algebra in \cite{FLS3}; second the algebra of rooted
planar binary trees $\y Sym$ which is known as the Loday-Ronco Hopf
algebra, and finally the algebra of rooted planar trees
$\y\widetilde{Sym}.$ The latter is the Hopf algebra of faces of the
associahedra as described in \cite{chap}, and in terms of graph
tubings in \cite{ForSpr:2010}.

The coproducts and products are all defined in \cite{FLS3} using
subscripts: the element of the vector space is $F_w$ where $w$ is a
tree of the given type. The coproduct is defined by splitting:

$$\Delta(F_w) = \sum_{w\psplit(w_0,w_1)} F_{w_0}\otimes F_{w_1}, $$
where the sum is over all ways to split the tree $w$ at one leaf; so
has $n$ terms. Multiplication on the left is defined by splitting
the left operand and grafting to the right operand:

$$F_w\cdot F_v = \sum_{w\psplit(w_0,\dots, w_n)} F_{(w_0,\dots, w_n)/v}, $$
where the sum is over all ways to split the tree $w$ at a multiset
of $n-1$ leaves (where $n$ is the number of leaves of $v.$)

We will often eliminate the subscript notation and simply draw the
basis element. For example, here is the coproduct in $\y Sym$:

\includegraphics[width=5.5in]{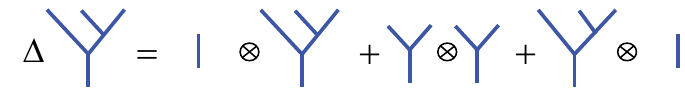}

 Here is how to multiply two trees in $\y Sym$:

\includegraphics[width=5.5in]{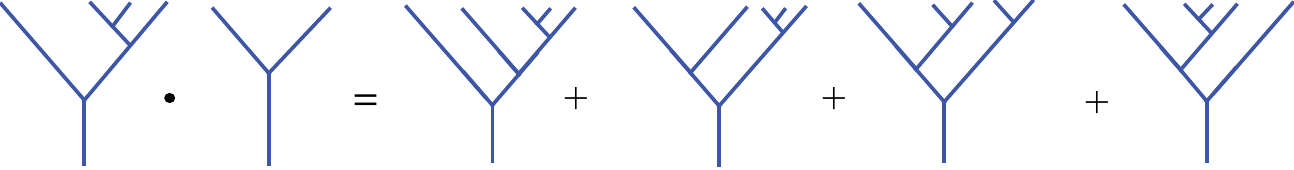}

For more examples see \cite{FLS3}.

 Given any of the 12 types of painted tree from
Figure~\ref{pgdiamond}, we get a graded vector space where trees
with $n$ leaves comprise the basis of degree $n-1.$ The basis of
degree 0 is the single-leaved painted tree--this is the same for
each of the 12 cases. The degree 1 basis is also identical for all
12 cases: the three painted trees with 2 leaves. The 12 cases differ
when it comes to the degree 2 bases, as seen in
Figure~\ref{2s_labels}. Note that while most of the trees in
Figure~\ref{pdiamond} can be seen as coming from a composition of
coalgebras, as labeled, the general trees in Figure~\ref{pgdiamond}
cannot since the unpainted forests can be grafted in multiple ways.
However, they can often still possess a coproduct, given by
splitting the trees leaf to root. Splitting a tree of a given type
always produces two trees of that same type. The weakly ordered
trees and weakly ordered forests can be split into two weakly
ordered trees or forests. In fact we have the following:

\begin{theorem}\label{coalg}
The action of splitting trees leaf to root makes each of the tree
types in Figure~\ref{pgdiamond} into the basis of a graded
coassociative coalgebra.
\end{theorem}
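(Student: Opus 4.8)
The plan is to verify coassociativity directly from the defining formula $\Delta(F_w)=\sum_{w\psplit(w_0,w_1)} F_{w_0}\otimes F_{w_1}$, where the sum runs over the $n$ ways to split an $(n+1)$-leaved tree $w$ at a single leaf. The key observation is that splitting is a \emph{local, leaf-indexed} operation: a single split is determined by the choice of a leaf (equivalently, a gap between leaves, or an edge of the paint-line data), and it produces an ordered pair of trees of the same type. Coassociativity will follow once I show that applying $(\Delta\otimes 1)\circ\Delta$ and $(1\otimes\Delta)\circ\Delta$ to $F_w$ both yield the same sum indexed by \emph{ordered pairs of leaves} of $w$, i.e. by the set of ways to split $w$ into an ordered triple $(w_0,w_1,w_2)$.

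The main steps I would carry out are as follows. First, I fix a tree $w$ of one of the twelve types and describe a single split at a leaf combinatorially in terms of the paint-line, the weak/linear vertical order on nodes, and the horizontal plane order; the central point to record is that this data restricts coherently to each of the two output trees $w_0,w_1$, so each output is again a bona fide tree of the same type (this is the assertion ``splitting a tree of a given type always produces two trees of that same type'' quoted in the excerpt, which I would make precise for all four forest types and three base-tree types). Second, I would set up a bijection
\[
\bigl\{\, w\psplit(w_0,w_1),\ w_0\psplit(u_0,u_1)\,\bigr\}\;\longleftrightarrow\;\bigl\{\, w\psplit(v_0,v_1),\ v_1\psplit(z_0,z_1)\,\bigr\}
\]
showing that splitting the left factor of a split and splitting the right factor of a split enumerate the same set of triples $(w_0,w_1,w_2)$. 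The natural way to organize this is to prove that a double split is equivalent to a simultaneous split at an \emph{ordered pair of leaves} $(\ell_1,\ell_2)$ of $w$, independent of the order in which the two cuts are made; the two iterated coproducts then correspond to the two orders of performing the cuts, and their equality is immediate from this symmetric description.

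I expect the main obstacle to be the bookkeeping at \emph{half-painted nodes} and at the paint-line when the two cut leaves are close together or lie in the same block of the weak order, since here the general trees of Figure~\ref{pgdiamond} (as opposed to the vertex trees of Figure~\ref{pdiamond}) do not come from a clean composition of coalgebras, and the interpretation of a cut differs between the weakly ordered forest types and the binary/corolla forest types (as spelled out in the discussion preceding Figure~\ref{detail}). The resolution is to check that the split still localizes: cutting at a leaf commutes with the structure at a half-painted node because the paint-line and the vertical order data each split independently along the chosen leaf, so the two cut loci never interfere. Once this independence is established, the ordered-pair-of-leaves description holds uniformly across all twelve types, and coassociativity of $\Delta$ follows; coassociativity for the remaining types is verified by the same argument, case by case, with only the descriptive language (weak order versus plane order versus corolla) changing.
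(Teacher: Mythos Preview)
Your proposal is correct and follows essentially the same approach as the paper: both iterated coproducts are identified with the single sum $\sum_{w\psplit(w_0,w_1,w_2)} F_{w_0}\otimes F_{w_1}\otimes F_{w_2}$ over simultaneous splits at two leaves, which is exactly the paper's one-line argument. Your discussion of half-painted nodes and paint-line bookkeeping anticipates subtleties the paper leaves implicit, but the underlying idea---that the two orders of cutting enumerate the same set of triples---is identical.
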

\begin{proof} The coproduct of a basis element is the sum of pairs of trees which are
formed by splitting at each leaf. Note that the degrees of the pairs
each sum to $n-1.$ Coassociativity is seen by comparing both orders
of applying the coproduct to the result of choosing any two leaves
at which to split at the same time:
$$(\Delta\otimes 1)(\Delta(F_w)) = (1\otimes\Delta)(\Delta(F_w)) = \sum_{w\psplit(w_0,w_1,w_2)} F_{w_0}\otimes F_{w_1} \otimes F_{w_2}.$$
\end{proof}
The compositions of coalgebras labeled in Figure~\ref{pdiamond} are
subcoalgebras of the corresponding generalizations in
Figure~\ref{pgdiamond}. We will denote these larger coalgebras by
${\mathcal E} = \widetilde{C/D}$ where $C$ and $D$ are the
corresponding sets of trees.  For example here is a coproduct in
$\widetilde{C/D};$ in this picture the painted trees could be any of
our twelve varieties.

\includegraphics[width=5.5in]{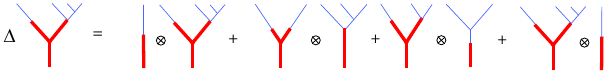}

Next we point out the \emph{actions} of certain Hopf algebras on
many of our coalgebras of generalized painted trees.

\subsection{Hopf algebra modules}
Using the same operations of splitting and grafting, we can often
show that the Hopf algebras ${\mathcal D} = \y S\widetilde{ym},$ and
${\mathcal D} = \csym$ possess actions on painted trees which make
our coalgebras of the latter into ${\mathcal D}$-modules.

\begin{theorem} For each
${\mathcal E} = \widetilde{C/D}$ with $C$ being the planar trees or
corollas, the coalgebra ${\mathcal E}$ with basis $C/D$ is
respectively a ${\mathcal D}=\y S\widetilde{ym}$-module coalgebra or
${\mathcal D}=\csym$-module coalgebra.
\end{theorem}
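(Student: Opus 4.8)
The plan is to make the action of $\mathcal{D}$ explicit, using the same splitting-and-grafting recipe that defines left multiplication, and then to verify in turn that $\mathcal{E}$ is a unital $\mathcal{D}$-module and that the comultiplication $\Delta$ and counit of Theorem~\ref{coalg} are morphisms of $\mathcal{D}$-modules (where $\mathcal{E}\ot\mathcal{E}$ carries the tensor-module structure coming from the coproduct of $\mathcal{D}$). Given a basis element $d$ of $\mathcal{D}$ --- a planar tree when $C$ is the planar trees, a corolla when $C$ is the corollas --- and a basis painted tree $e\in\mathcal{E}$ with $m$ leaves, I would set
$$d\act e \;=\; \sum_{d\psplit(d_1,\dots,d_m)} (d_1,\dots,d_m)/e,$$
the sum over all ways to split $d$ at a multiset of $m-1$ leaves into an ordered forest of $m$ trees, each grafted onto the corresponding (top) leaf of $e$ above the paint line. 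Because splitting a planar tree produces planar trees and splitting a corolla produces corollas, and because the graft occurs strictly above the paint line, every summand is again a painted tree of type $C/D$; hence the action is well defined on $\mathcal{E}$.

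Next I would check that this is a unital, associative action. The degree-zero single-leaf tree is the unit of $\mathcal{D}$; it splits into a forest of single-leaf trees, so grafting leaves $e$ unchanged and it acts as the identity. Associativity $(d\cdot d')\act e = d\act(d'\act e)$ follows from the same reindexing of iterated splits that establishes associativity of the product of $\mathcal{D}$ in \cite{FLS3}: both sides equal the single sum over all ways of splitting $d$ and $d'$ and grafting the resulting forests, in order, onto the leaves of $e$. Since the paint line and the painted base are untouched, this is literally the associativity computation for $\mathcal{D}$, carried out in the unpainted region.

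The substantive step is the compatibility of the action with the coproduct, namely that $\Delta(d\act e) = \sum (d_{(1)}\act e_{(1)})\otimes(d_{(2)}\act e_{(2)})$ in Sweedler notation for the splitting coproducts of $\mathcal{D}$ and $\mathcal{E}$. I would prove this by exhibiting a bijection between the single-leaf splits of $d\act e$ and the pairs consisting of a single-leaf split of $e$ together with a compatible split of $d$. Concretely, a cut of $d\act e$ from a top leaf down to the root passes through exactly one grafted piece of $d$ and then continues through $e$, crossing the paint line once; the grafted trees lying entirely to the left or right of the cut are distributed into the corresponding tensor factor. Tracking this bijection is exactly the product--coproduct compatibility already established for the bialgebra $\mathcal{D}$ in \cite{FLS3}, now relativized by the inert painted base $v$, which is split once and carried along into both factors. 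The counit $\varepsilon_{\mathcal{E}}$, the projection onto the degree-zero single-leaf tree, satisfies $\varepsilon_{\mathcal{E}}(d\act e)=\varepsilon_{\mathcal{D}}(d)\,\varepsilon_{\mathcal{E}}(e)$ because grafting adds degrees.

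The main obstacle is precisely the bookkeeping of this last bijection: one must verify that splitting after grafting distributes the forest pieces over the two halves in exactly the way that independently splitting $d$ and $e$ and then acting does. Once this is pinned down, both asserted cases follow from the identical argument: for $C$ the planar trees with $\mathcal{D}=\y S\widetilde{ym}$, and for $C$ the corollas with $\mathcal{D}=\csym$, the unpainted forest is built from the same species on which $\mathcal{D}$ is modeled, so the splitting--grafting recipe and the reduction to the bialgebra identity for $\mathcal{D}$ are uniform.
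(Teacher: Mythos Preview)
Your argument is essentially the paper's for the planar-tree case, but there is a genuine gap in the corolla case. You write that ``splitting a corolla produces corollas, and because the graft occurs strictly above the paint line, every summand is again a painted tree of type $C/D$.'' The first clause is true, but it does not give the conclusion: the painted tree $e$ already carries an unpainted forest of corollas, and grafting a nontrivial corolla onto a leaf of a nontrivial corolla produces a tree with two internal nodes, which is not a corolla. So $(d_1,\dots,d_m)/e$ need not lie in $\widetilde{\c/D}$ as written.

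The paper repairs exactly this: after grafting, one applies the forgetful map $\kappa/1$ to collapse each unpainted tree back to a corolla (equivalently, one uses the operadic composition in the operad of corollas rather than naive grafting). With this correction the action is well defined, and one must then recheck associativity and the coproduct compatibility with the extra $\kappa$ step in place; this is still straightforward, but it is why the example in the paper's proof shows multiplicities appearing after the collapse. Your bijection argument for $\Delta(d\act e)$ goes through in the planar case; in the corolla case you need to observe that $\kappa$ commutes with splitting (splitting a corolla at any leaf yields two corollas, and splitting a tree and then applying $\kappa$ to each piece agrees with applying $\kappa$ and then splitting), so the same bijection survives.
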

\begin{proof}We show that ${\mathcal E}$ is an associative left
module, and that the action of ${\mathcal D}$ (denoted $\star$)
commutes with the coproducts as follows: $\Delta_{\mathcal E}(d\star
e) = \Delta_{\mathcal D}(d)\star\Delta_{\mathcal E}(e).$ We consider
the action on basis elements. The action of a planar tree $d\in \y
S\widetilde{ym}$ (or corolla  in $\csym$ respectively) on a painted
tree $e$ involves splitting $d$ and grafting the resulting forest
onto the leaves of $e$.  In the case of $C$ being corollas, the
result of the grafting is then subjected to the application of
$\kappa/1.$ Note that this application of $\kappa$ is equivalent to
the composition in the operad of corollas, as pointed out in
\cite{FLS3}. For example, where $C$ is the set of corollas:

\includegraphics[width=3in]{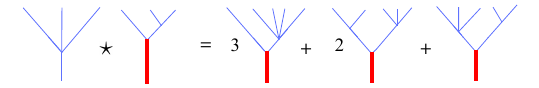}

Note in the above example that six terms result from choosing any
two splits of the three leaves in the corolla. After applying
$\kappa,$ there are duplicates as enumerated by the coefficients.

 The associativity of the action is then
straightforward to show on basis elements: given three layers of
trees (the bottom layer is the painted tree) the result does not
depend upon the order in which one makes the grafts.

The commutativity property is also straightforward on basis
elements. Recall that the coproduct is applied linearly to each term
in a sum, on the left side of the equation: $\Delta_{\mathcal
E}(d\star e)$. Also recall that the action of a tensor product on a
tensor product is performed componentwise: $(x\otimes y) \star
(z\otimes w) = (x\star z)\otimes(y\star w)$ on the right side. Thus
each term on the left-hand side is a pair of painted trees, formed
by splitting after grafting a splitting of $d$ onto $e$. That pair
is found on the right-hand side: all the splits are just performed
before the grafting occurs. See the following example, where we pick
out the matching terms.

\includegraphics[width=\textwidth]{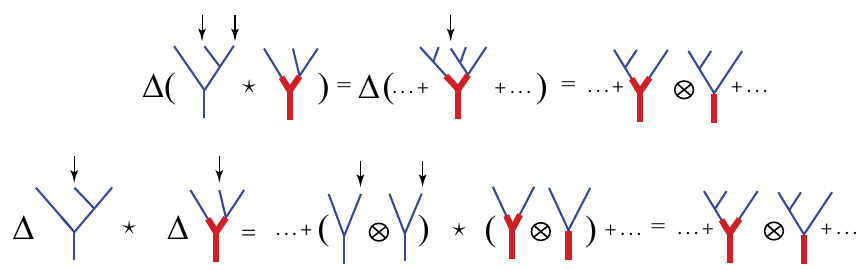}
\end{proof}

\begin{theorem} For each
${\mathcal E} = \widetilde{C/D}$ for $D$ being the planar trees or
corollas, and $C$ being either corollas, planar trees, or weakly
ordered trees, the coalgebra ${\mathcal E}$ with basis $C/D$ is
respectively a ${\mathcal D}=\y S\widetilde{ym}$-module coalgebra or
${\mathcal D}=\csym$-module coalgebra.
\end{theorem}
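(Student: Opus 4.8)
The plan is to mirror the proof of the previous theorem, interchanging the roles of the painted base and the unpainted forest: there $\mathcal{D}$ enlarged the unpainted forest of type $C$ at the top of $e$, whereas here $\mathcal{D}$ will enlarge the painted base of type $D$ at the bottom. Concretely, I would define the (right) action of $d\in\mathcal{D}$ on a painted tree $e=w/v\in C/D$ by splitting $e$ into painted pieces, one for each leaf of $d$, grafting those pieces onto the leaves of $d$, and then declaring $d$ to be painted so that it becomes the new lowest portion of the base; this is the product $F_e\cdot F_d$ followed by painting the bottom tree. When $D$ is the planar trees we have $\mathcal{D}=\y S\widetilde{ym}$ and no correction is needed, while when $D$ is corollas, so $\mathcal{D}=\csym$, one post-composes with $\tfrac{1}{\kappa}$, exactly as $\tfrac{\kappa}{1}$ was used in the previous proof, so that the enlarged base is again a single corolla. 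Since each split piece of $e$ retains its own unpainted forest and grafting onto the painted $d$ grows only the painted region downward, the forest type $C$ is never disturbed; this is precisely what permits $C$ to range over corollas, planar trees, and weakly ordered trees while forcing only $D$ to match $\mathcal{D}$. The first check is that $e\star d$ lands back in $C/D$: its base is assembled from $d$ and the bases of the pieces, all of type $D=\mathcal{D}$, and its forest from the type-$C$ forests of the pieces.

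Next I would verify the two module-coalgebra axioms, reusing the earlier arguments almost verbatim. Associativity, $e\star(d_1\cdot d_2)=(e\star d_1)\star d_2$, is again the observation that the result of a succession of grafts is independent of the order in which the grafts are carried out, now performed below the paint line rather than above it. For compatibility with the coproduct, $\Delta_{\mathcal{E}}(e\star d)=\Delta_{\mathcal{E}}(e)\star\Delta_{\mathcal{D}}(d)$ with $\star$ acting componentwise on tensors, I would match terms as before: a leaf of $e\star d$ at which we split corresponds to a leaf-split of $d$ together with a compatible leaf-split of the relevant piece of $e$, so that performing all splits before grafting produces the same family of pairs of painted trees as splitting after grafting.

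The step I expect to be the main obstacle is this coproduct compatibility, and it is genuinely more delicate here than in the previous theorem for a structural reason. There the algebra element $d$ was the one split by the action and the grafting occurred near the leaves, so a comultiplication split met the grafted material only at the top and the bookkeeping was transparent. Here the action itself splits the module element $e$ and attaches its pieces at the base, so a comultiplication split of $e\star d$ is a second splitting of $e$ superimposed on the action's splitting, now threaded through the paint line and through the copy of $d$ sitting at the root. I must therefore check carefully that such a combined split of $e\star d$ decomposes uniquely into a coproduct split of $e$ and a coproduct split of $d$ which are then regrafted, respecting the half-painted-node conventions of Section~\ref{def} illustrated in Figure~\ref{detail}; and in the corolla case that the collapsing effected by $\tfrac{1}{\kappa}$ commutes with splitting and yields matching multiplicities on both sides of the identity, just as the coefficients were tracked in the preceding proof. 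Once these term-by-term correspondences are established, the module and coalgebra-morphism properties follow and furnish the asserted $\mathcal{D}$-module coalgebra structure.
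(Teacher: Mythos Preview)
Your proposal is correct and follows the same approach as the paper: define a right action by splitting $e$ and grafting onto $d$ (applying $\tfrac{1}{\kappa}$ when $D$ consists of corollas), then verify associativity and coproduct compatibility just as in the previous theorem. The paper's own proof is in fact a single sentence invoking this symmetry with the preceding result. Your worry that the coproduct compatibility is ``genuinely more delicate'' here is overcautious: the two situations are mirror images, since in the previous theorem the coproduct split was already superimposed on the action's multi-splitting of $d$, and here the same happens with $e$; the term-by-term matching works identically in both directions.
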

\begin{proof}
The same features need to be checked as in the proof of the previous
theorem, which is straightforward. Now however the action of the
Hopf algebra is on the right, so the product $e\star d$ involves
splitting $e\in {\mathcal E}$ and then grafting to $d\in {\mathcal
D}.$
\end{proof}

The fact that one sided Hopf algebras exist for the generalized
painted trees follows from the use of the maps $\beta, \tau,$ and
$\kappa$ defined on trees and forests. We recall the definition of
the sort of map we need from \cite{FLS3}. We let ${\mathcal D}$ be
one of our connected graded Hopf algebras with product $v\cdot w.$

\begin{definition}
A map $f:{\mathcal E}\to{\mathcal D}$ of connected graded coalgebras
is a \emph{connection} on ${\mathcal D}$ if ${\mathcal E}$ is a left
(right) ${\mathcal D}$-module coalgebra and $f$ is both a coalgebra
map and a module map:
$$(f\otimes f)\Delta_{\mathcal E}(e) = \Delta_{\mathcal D}f(e) ~~and~~ f(d\star e) =  d\cdot f(e).$$
\end{definition}

We have examples of connections $f$ using the maps $\tau$ and
$\kappa$. If the target is corollas, we apply first $\tau$  and then
$\kappa$ to a painted tree $w$. Then we forget the painting and
apply $\kappa$ once more. The result is just a corolla with the same
number of leaves as $w.$ If the target is planar trees we apply
$\tau$ only, and then forget the painting. The result is a planar
tree with the same branching structure as $w.$ These example
connections are seen to be coalgebra and module maps by inspecting
their action on basis elements: the result is the same if $f$ is
applied before or after splitting and grafting.

Here is an example connection from planar trees over weakly ordered
trees to planar trees:

\includegraphics[width=4in]{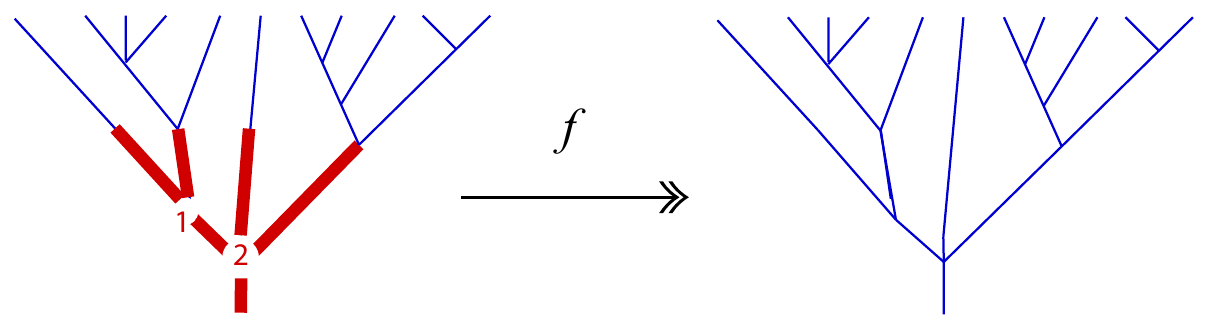}

Here is an example connection from corollas over weakly ordered
trees to corollas:

\includegraphics[width=4in]{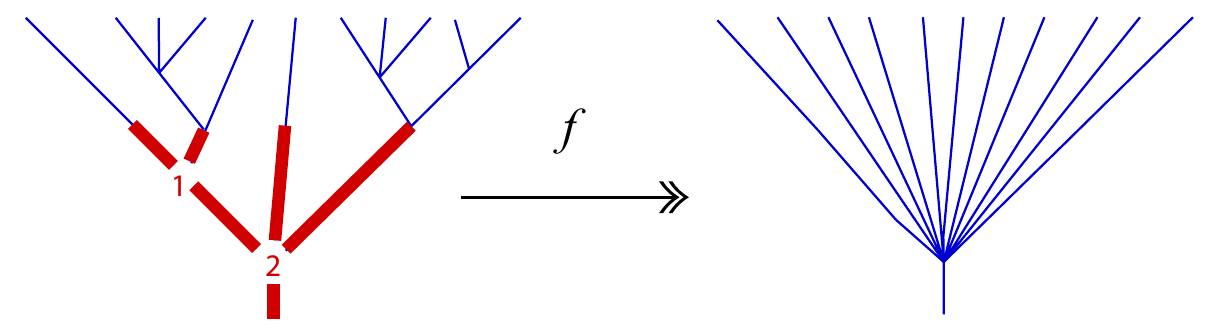}

Here is an example connection from corollas over planar trees to
planar trees:

\includegraphics[width=4in]{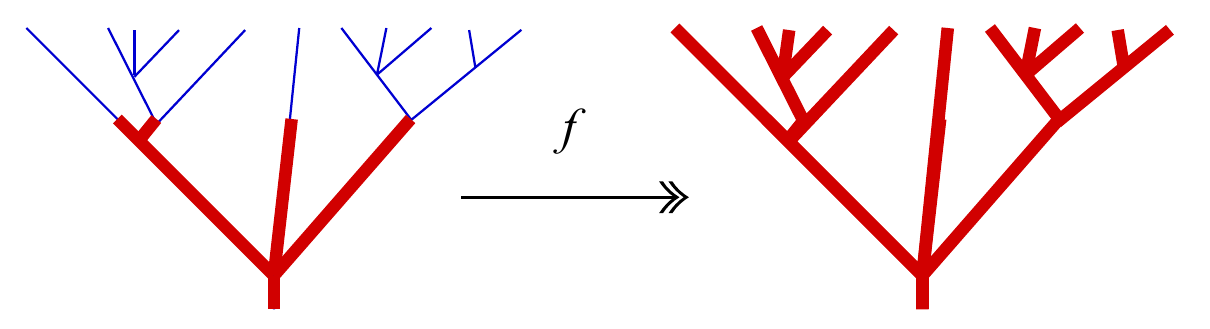}

\begin{theorem}\label{hopf_alg}
Consider the coalgebras ${\mathcal E}$ with graded bases the painted
trees $C/D$ with $C$ (top) consisting of forests of planar trees or
forests of corollas; and those with $D$ being planar trees or
corollas (bottom) and with forests of planar trees, corollas or
weakly ordered trees on top. Each of these eight coalgebras are
one-sided Hopf algebras (they possess a one-sided unit and one-sided
antipode.)
\end{theorem}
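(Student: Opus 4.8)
The plan is to obtain all eight one-sided Hopf structures from the two module coalgebra theorems just proved, combined with the connections built from $\tau$ and $\kappa$, by invoking the general principle (the mechanism of \cite{FLS3}) that a connection $f\colon{\mathcal E}\to{\mathcal D}$ onto a connected graded Hopf algebra ${\mathcal D}$ endows the ${\mathcal D}$-module coalgebra ${\mathcal E}$ with a one-sided Hopf structure. First I would record this principle explicitly. For a left action $\star$ one sets $e\cdot e' := f(e)\star e'$, which agrees with the general product $F_w\cdot F_v$ after reducing the left factor by $f$ (split $f(e)$, graft the resulting forest onto $e'$). Associativity then reads $(e\cdot e')\cdot e''=(f(e)\cdot f(e'))\star e''=e\cdot(e'\cdot e'')$, using that $f$ is a module map and that $\star$ is associative, while the degree-$0$ painted tree $\mathbbm{1}$ is a left unit because $f(\mathbbm{1})=\mathbbm{1}_{\mathcal D}$ gives $\mathbbm{1}\cdot e=\mathbbm{1}_{\mathcal D}\star e=e$ (the right unit can fail, which is the source of the one-sidedness). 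Bialgebra compatibility follows from $f$ being a coalgebra map and the action being componentwise on tensor products, so that $\Delta_{\mathcal E}(e\cdot e')=\Delta_{\mathcal D}(f(e))\star\Delta_{\mathcal E}(e')=(f\otimes f)\Delta_{\mathcal E}(e)\star\Delta_{\mathcal E}(e')=\Delta_{\mathcal E}(e)\cdot\Delta_{\mathcal E}(e')$. For a right action the roles of the two sides are interchanged, with $e\cdot e':=e\star f(e')$ and $\mathbbm{1}$ a right unit.

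Next I would match each of the eight cases to a module structure and a connection. The cases whose top forest $C$ is a forest of planar trees or of corollas use the left action of the first module theorem: the connection $f=\tau$ (forget the vertical ordering, then the painting) lands in $\y S\widetilde{ym}$ when $C$ is planar, and $f=\kappa\circ\tau$ lands in $\csym$ when $C$ is corollas. The cases whose bottom tree $D$ is planar trees or corollas use the right action of the second module theorem, with the analogous connections onto $\y S\widetilde{ym}$ or $\csym$; this regime is the only one available for the two cases carrying a forest of weakly ordered trees on top, since those admit no left action. The three connections drawn before the statement are representatives of these families. Every one of the eight coalgebras thus acquires at least one such structure, and the four cases with both a legal top and a legal bottom acquire two (one per side), which accounts for the remark that some are Hopf algebras in multiple ways. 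The one-sided antipode is then produced by the usual recursion along the grading: because each ${\mathcal E}$ is connected, the identity map admits a convolution inverse constructed degree by degree, and this inverse serves as a one-sided antipode compatible with the one-sided unit.

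The hard part will be confirming that each $f$ is genuinely a connection on the \emph{generalized} painted trees of Figure~\ref{pgdiamond}, where the operations must be reconciled with the half-painted nodes. The coalgebra- and module-map identities have to be checked against the nonstandard readings of $\frac{\tau}{g}$ and $\frac{\kappa}{g}$ at a half-painted node---a single trunkless tree grafted on in the $\tau$ case, versus possibly several trees grafted simultaneously in the $\kappa$ case---so the crux is verifying that applying $f$ before or after splitting-and-grafting returns the same basis trees in each of these regimes. As the connection examples indicate, this is ultimately an inspection on basis elements, but it is the step that requires care, since a mismatch at half-painted nodes would break either the module-map identity $f(d\star e)=d\cdot f(e)$ or the coalgebra-map identity $(f\otimes f)\Delta_{\mathcal E}=\Delta_{\mathcal D}f$ on which the entire reduction rests.
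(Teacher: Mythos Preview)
Your proposal is correct and follows essentially the same route as the paper: invoke the connection mechanism of \cite{FLS3} (the paper cites it as Theorem~4.1 there) to pass from the left/right $\mathcal D$-module coalgebra structures of the two preceding theorems to one-sided Hopf structures via $e\cdot e'=f(e)\star e'$ or $e\cdot e'=e\star f(e')$, with the connections being the forgetful maps to planar trees or corollas. The only difference is cosmetic: you unpack the associativity, bialgebra, and unit verifications that the paper leaves packaged inside the citation, and you flag the half-painted-node check that the paper handles by the remark just before the statement that the connections are ``seen to be coalgebra and module maps by inspecting their action on basis elements.''
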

\begin{proof}
We rely on Theorem 4.1 of \cite{FLS3}, which states that when there
is a connection $f:{\mathcal E}\to{\mathcal D}$ then ${\mathcal E}$
is a Hopf module and a comodule algebra over ${\mathcal D},$ and
also a one-sided Hopf algebra in its own right. For those coalgebras
$\widetilde{C/D}$ with planar trees or corollas on top (as $C$), the
connection $f$ is the map to $C,$ the planar trees or corollas
respectively. Note that in this case the product  $\cdot:{\mathcal
E}\otimes{\mathcal E}\to {\mathcal E}$ will be on the left: for
$e,e' \in {\mathcal E}$ we have, from the proof of Theorem 4.1 of
\cite{FLS3}, that $e\cdot e' = f(e) \star e'.$

 For those coalgebras
$\widetilde{C/D}$ with planar trees or corollas on bottom (as $D$),
the connection $f$ is the map to $D,$ the planar trees or corollas
respectively. Note that in this case the product  $\cdot:{\mathcal
E}\otimes{\mathcal E}\to {\mathcal E}$ will be on the right: for
$e,e' \in {\mathcal E}$ we have, from the proof of Theorem 4.1 of
\cite{FLS3}, that $e\cdot e' = e \star f(e').$

We note that the one-sided unit $\eta=\eta(1)$ for either left or
right products is the painted corolla with one leaf. The counit
$\epsilon$ is a projection from ${\mathcal E}$ onto the base field:
its value is the coefficient of the painted corolla with one leaf.
\end{proof}

Notice that some of our structures (the four painted trees that use
no ordered trees) are Hopf algebras in two different ways. One has a
left-sided unit and the other has a right-sided unit. Here is an
example of the product in the Hopf algebra with left-side unit on
the coalgebra $\widetilde{C/C}$ for $C$ the binary trees:

\includegraphics[width=5.5in]{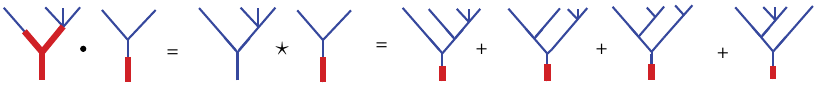}

Here is an example of the product in the Hopf algebra with
right-side unit on the coalgebra $\widetilde{C/C}$ for $C$ the
binary trees:

\includegraphics[width=5.5in]{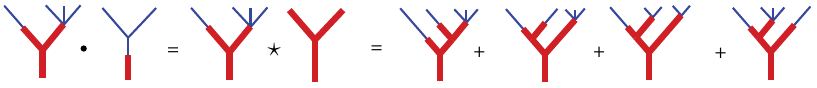}

Finally, we exhibit some antipodes $S:
\widetilde{C/D}\to\widetilde{C/D}$. These are guaranteed to exist in
connected graded bialgebras. For example, for left multiplication,
the left antipode may be calculated with the recursive formula
developed in \cite{FLS3}, section 4: $$ S(\eta) =
\eta;~~~~~~~~~~~~~~~~~~~ S(e) = -\eta\cdot e - \sum S(e_1)\cdot e_2
$$ where the sum is over all splittings of $e$ into two painted
trees $e_1$ and $e_2$, both with more than a single leaf.

Here are some examples, where the antipodes of larger trees can be
found recursively using antipodes of their splittings:

\includegraphics[width=3.5in]{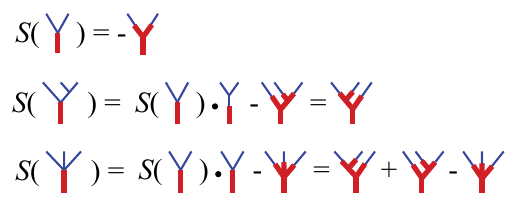}

\subsection{Enumeration using species composition}
Next we calculate the graded dimensions $d_n = $ dim$(\widetilde{C/D})_n$.
 For each coalgebra $\widetilde{C/D}$ this dimension is the sequence of numbers of basis elements.
 We find generating functions for the
sequences of cardinalities of sets of trees, for most of the 12 tree varieties. In each case we want to count the trees themselves, but achieve this by
 finding the cardinality of a species $G$  which has structures those trees with labeled leaves. Recall that the set of \emph{structure types}
 of a species is the collection of unlabeled versions of the combinatorial objects, as described in \cite{species}.  That source is also our reference for \emph{species substitution},
 also known as \emph{(partitional) composition}.

However, since $n$ labels are assigned to the $n$ leaves in a row left to right, counting the number of structures of one of our species $G$
just means overcounting the structure types by a factor of $n!$. Thus
 the exponential generating function $\sum|G(n)|x^n/n!$ of the species is the same as the ordinary generating function of the graded dimension: $\sum d_nx^n.$
Note that for a given type of tree, the graded dimension is also the numbers of faces of the polytopes in the sequence, plus one for the polytope itself.
 The first few numbers in each sequence can be checked by hand-counting the faces
in Example~\ref{bigpoly_cccc_ext_lit} and Figure~\ref{2s_labels}.

Consider the combinatorial species $L_+$ of non-empty lists, $F$ of
weakly ordered trees with labeled leaves, and $Y$ of plane rooted
trees with labeled leaves. The exponential generating functions of
two of these have nice closed forms: they are $\displaystyle{L_+(x)
= \frac{x}{1-x},}$ whose derivatives at 0 give the factorials; and
$$\displaystyle{Y(x) = \frac{2x}{1 + x + \sqrt{1 - 6x + x^2}},}$$
whose derivatives at 0 give the super-Catalan numbers (numbers of faces of associahedra, A001003).

For the species $F$ of weakly ordered trees with $n$ labeled leaves,
the structure types are counted by the Fubini numbers $f(n-1)$,
since the number of gaps between leaves is $n-1.$ We are forced to
use the exponential generating function:
$$F(x) = \sum_{n=1}^\infty  f(n-1)x^n = x+ x^2 + 3x^3+ 13x^4 + 75x^5 +\dots$$
Notice that $F(x)$ is an exponential generating function for the series  $n!f(n-1),$ which is the sequence counting the
number of ordered trees with $n$ labeled leaves.

Now we are able to set up ordinary generating functions for the sets
$\widetilde{C/D},$ for the cases in which $\widetilde{C/D}$ is
describable as the structure types of a composition of
species. These are listed in Table~\ref{tspecies}. In each case the
middle species is $L_+.$ Here the nonempty lists are of one or
several trees grafted at the paint line to single leaves. For
instance, when $C = D =$ corollas, then the set $\widetilde{C/D}$ of trees is the set of
structure types of the composite species $L_+\circ L_+\circ L_+.$ Thus the exponential generating function is

$$(L_+\circ L_+\circ L_+) (x) = \frac{\frac{x}{1-x}}{1-2\frac{x}{1-x}} = \frac{x}{1-3x} = x + 3x^2 + 9x^3 + 27x^4 +\dots$$

Notice that this is the ordinary generating function of the faces of the cubes. In our
examples, the ordinary generating function of the structure types is
the exponential generating function of the species, since the labeled leaves always contribute
a factorial.

For the trees whose half-painted nodes are seen as grafting on a single tree at its least node,
after removing its trunk and root, we instead compose with $2G-x$. The factor of two accounts for the options of grafting with or without a trunk,
and subtracting $x$ corrects for the over-counting of grafting on a tree with one leaf--with or without a trunk are both the same.
  These facts just recounted imply the following:

\begin{theorem}\label{species} The sets $\widetilde{C/D}$ for $C$ and $D$ forests of corollas,
plane trees, or weakly ordered trees are
in bijection with the structure types of the substitutions of species as listed in Table~\ref{tspecies}.
\end{theorem}
\begin{table}
\noindent
 \resizebox{5.5in}{!}{
\begin{tabular}{||c|c|c|c|c||}
\hline\hline
$C=$& $D=$ & Species  & Sequence of & OEIS \\
rooted&rooted&Composition & dimensions & entry\\
\hline \hline
Corollas & Corollas & $L_+\circ L_+\circ L_+$ & $1, 3, 9, 27, 81, \dots $ & A000244\\
\cline{2-5}  & Plane & $Y\circ L_+\circ L_+$ & $1, 3, 11, 49, 253, 1439, \dots $ & \\
\cline{2-5}
 & Weak order & $F \circ L_+\circ L_+$ & $1, 3, 11, 51, 299, \dots$ & A007047\\
\cline{1-5}
Plane & Corollas & $L_+\circ L_+\circ Y = L_+\circ(2Y-x)$ & $1, 3, 11, 45, 197, 903, \dots$ & A001003\\
\cline{2-5} trees & Plane & $Y\circ L_+\circ Y = Y\circ(2Y-x)$ & $1, 3, 13, 67, 381, 2311, \dots$ &  \\
\cline{2-5}
 & Weak order & $F \circ L_+\circ Y = F\circ(2Y-x)$ &$1, 3, 13, 69, 427, \dots$ & \\
\cline{1-5}
 Weakly   & Corollas & $L_+\circ(2F-x)$ & $1, 3, 11, 49, 265, 1739, \dots$ & \\
\cline{2-5} ordered  & Plane & $Y\circ(2F-x)$ & $1, 3, 13, 71, 449, \dots$ & \\
\cline{2-5} trees & Weak order & $F\circ(2F-x)$ & $1, 3, 13, 73, 495, \dots$ & \\
 \hline\hline
\end{tabular}
 }\caption{Compositions (substitutions) of species whose structure types enumerate the sets $\widetilde{C/D}$, with reference to the OEIS \cite{Slo:oeis} if known.}\label{tspecies}
\end{table}

For example, when $C = $ forests of corollas and $D=$ weakly ordered
trees we have:

 $$(F \circ L_+\circ L_+ )(x) = \frac{x}{1-2x} +\left(\frac{x}{1-2x}\right)^2 +3\left(\frac{x}{1-2x}\right)^3
+13\left(\frac{x}{1-2x}\right)^4+75\left(\frac{x}{1-2x}\right)^5
+\dots$$
$$ = x + 3 x^2 + 11 x^3 + 51 x^4 + 299 x^5 + \dots$$
which has coefficients the numbers of faces of the stellohedra, which is sequence A007047 in the OEIS \cite{Slo:oeis}.

Another example, when $C = $ forests of plane trees and $D=$ corollas we
have:

 $$(L_+ \circ L_+\circ Y )(x) = \frac{\frac{2x}{1 + x + \sqrt{1 - 6x + x^2}}}{1-2\frac{2x}{1 + x + \sqrt{1 - 6x + x^2}}} = \frac{2 x}{\sqrt{x^2 - 6 x + 1} - 3 x + 1}= \frac{2}{1 + x + \sqrt{1 - 6x + x^2}}-1$$
$$ = x + 3 x^2 + 11 x^3 + 45 x^4 + 197 x^5 + 903 x^6 + \dots$$

Notice here that the numbers of faces of the associahedra (A001003) are seen as coefficients, but shifted as expected.

For another example, we count the faces of the composihedra:
$$(Y \circ L_+\circ L_+ )(x) = \frac{2\frac{x}{1-2x}}{1 + \frac{x}{1-2x} + \sqrt{1 - 6\frac{x}{1-2x} + \left(\frac{x}{1-2x}\right)^2}} = \frac{2x}{1 - x + \sqrt{1 - 10x + 17x^2}}  $$
$$=x + 3 x^2 + 11 x^3 + 49 x^4 + 253 x^5 + 1439 x^6 +\dots.$$
which does not yet appear in the OEIS.

For a final example, we show the generating function for weakly ordered trees grafted onto corollas.

$$L_+\circ(2F-x) = \frac{x+ 2x^2 + 6x^3+ 26x^4 + 150x^5+\dots}{1 -(x+ 2x^2 + 6x^3+ 26x^4 + 150x^5+\dots) }$$
$$= x + 3 x^2 + 11 x^3 + 49 x^4 + 265 x^5 + \dots$$

Note that as seen in Table~\ref{restab} there is only one of our tree types that we cannot yet count.
 That is because in the three cases of grafting on a weakly ordered forest the resulting structures do not fit the
definition of species composition--there is an ordering structure that encompasses multiple ``substituted'', or grafted on, trees.
For two of those cases however the resulting structures
are in bijection with well known species, leaving the one open case.

In the next section we will define twelve poset structures, one for each of our sets of trees. In most cases this poset is equivalent to the face poset of a polytope.
\newpage

\section{Polytopes}\label{poly}

\subsection{Partial ordering of nodes and gaps.} Each painted tree $t$ (any of our 12
types of painted tree with $n+1$ leaves) determines a
partial ordering of a partition $\pi(t)$ of the set $\{0,\dots,n\}$. The numbers from 1 to $n$ denote the gaps between leaves, and 0 is adjoined.
One part of the partition $\pi(t)$ consists of 0 and the gaps of $t$ which end at half-painted nodes. Other parts of $\pi(t)$ consist of
all gaps which end at nodes sharing a given vertical level, in the original tree and forest.
 Elements in a part of the partition $\pi(t)$ are considered tied, that is, equal in a weak partial ordering of the set $\{0,\dots,n\}$ of gaps between leaves plus 0.
We observe that 1) all half-painted nodes must be forced to remain at
the same level, that is, tied in a
weak order; and 2) that gaps ending at nodes above the paint line will never
surpass gaps ending at half painted nodes, and neither of the former will surpass
gaps ending at painted nodes.

For example consider the tree $t\in \widetilde{C/D}$ , with $C=$ weakly ordered forests, $D=$ ordered trees, and $n=10,$  at the top of Figure~\ref{pgdiamond}. This tree is
also shown in part (a) of Example 4.6 with $\pi(t) = \{\{0,3,9\},\{1\},\{2,3,7\},\{5,6\},\{8\},\{10\}\}$.
The partial ordering of $\pi(t)$ here is a single chain: $\{10\} < \{2,3,7\} < \{8\} < \{0,3,9\} < \{1\} < \{5,6\}.$
 We can consider the gaps 2,3, and 7, for instance, as equivalent or tied in
the weak order of gaps. Note that all the parts of $\pi$ are comparable here because of the type $\widetilde{C/D}$.

Now we can define 12 separate posets whose elements are painted trees: one
poset on each of our 12 types of painted trees shown in
Figure~\ref{pgdiamond}. Note that the simplest painted tree with $n$
leaves has one half-painted node: $n$ single leafed unpainted trees
all grafted to a painted trunk, the node coinciding with the paint
line. This \emph{half-painted corolla} can be interpreted as one of
any of the 12 painted tree varieties, and it will be the unique
maximal element in all 12 posets. Its ordered partition $\pi$ has only one part.

\begin{definition}
 Given two painted trees $s$ and $t$ that are of the same painted type (i.e.
they share the same types of tree and forest, below and above the
paint line) we define the \emph{painted growth preorder}, (shortly proven to be a partial order) in which :
$$s \prec  t$$ if $s=t$ or if $s$ is formed from $t$ using a series of pairs $(a,b)_i$ of the following two moves, each pair performed in the following order:
\end{definition}
\begin{enumerate}
\item[a)] \emph{growing} internal edges of $t:$ introducing new internal edges or increasing the length of some internal edges
(either painted or unpainted). This growth results in a
 refinement of the (weak) partial order on the set $\{0,\dots,n\}$ of gaps between leaves plus 0, by adding relations between
previously incomparable (or tied) elements.

\item[b)] ...followed by  \emph{forgetting} any superfluous structure introduced by the edge
growing. This is described precisely by taking the tree that results
from growing edges, and applying to it the forgetful map (from the
set of $\beta, \tau, \kappa, 1$ and their fractions and
compositions) that is needed to ensure that the result is in the
original type of the painted tree $t$.

 For example if the original type of $t$ had
weakly ordered forests grafted to weakly ordered trees, we only
apply the identity. However if $t$ originally was a forest of weakly
ordered trees grafted to a weakly ordered tree we should apply
$\displaystyle{\frac{\beta}{1}}.$

\end{enumerate}

 Note that internal edges can grow where there was no
internal edge before, such as at a half-painted node or any node
that had degree larger than three. Note also that the rules for
painted trees must be obeyed by the growing of edges--for instance an
unpainted edge cannot grow from a completely painted node (and vice
versa), and if some painted edges are grown from a half-painted node this cannot result in a node where painted and unpainted edges exist at the same level.

To find examples of (non-covering) relations in the 12 posets see the
trees in respective locations of figure~\ref{pdiamond} and
Figure~\ref{pgdiamond}: certain of the latter are comparable to the former
in the same positions. (Not all: for instance the uppermost respective trees in these two figures are not comparable.) Several more covering relations for some of
our 12 classes of general painted trees are shown next:

                  \includegraphics[width=\textwidth]{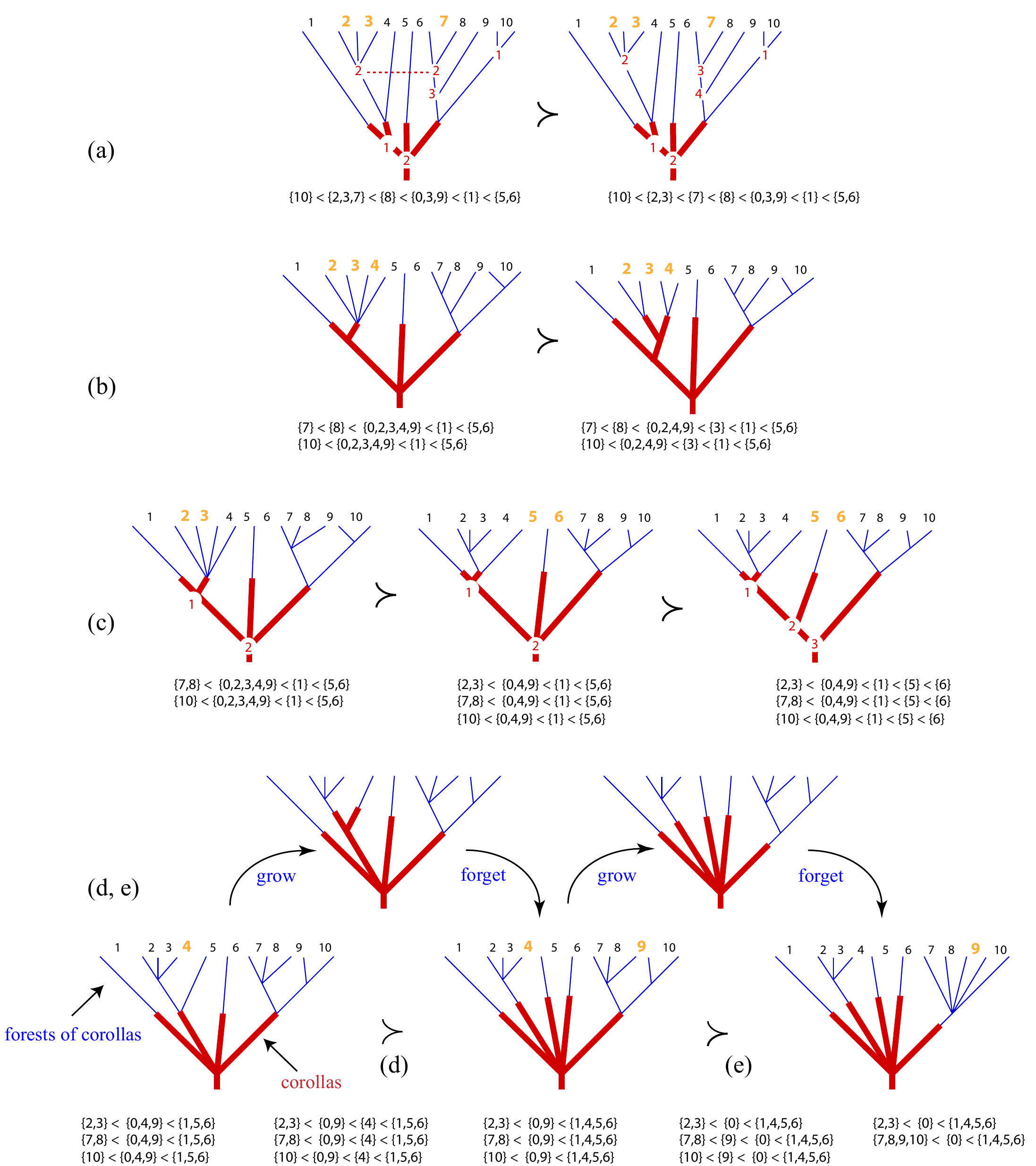}
\begin{exam}\label{covering}
Above we see covering relations, with the associated partially ordered partitions $\pi$ shown below each tree. In the first (a) we are looking at
weakly ordered forests grafted to a weakly ordered tree, so growing
an edge is a covering relation. In the next relation (b) we are
looking at rooted trees above and below the graft--again no
forgetting is needed. Relations (c) are in the stellohedron. At the
bottom, relations (d,e) are in the cube. For both covering relations the forgetful map is $\kappa.$
\end{exam}

                  \includegraphics[width=\textwidth]{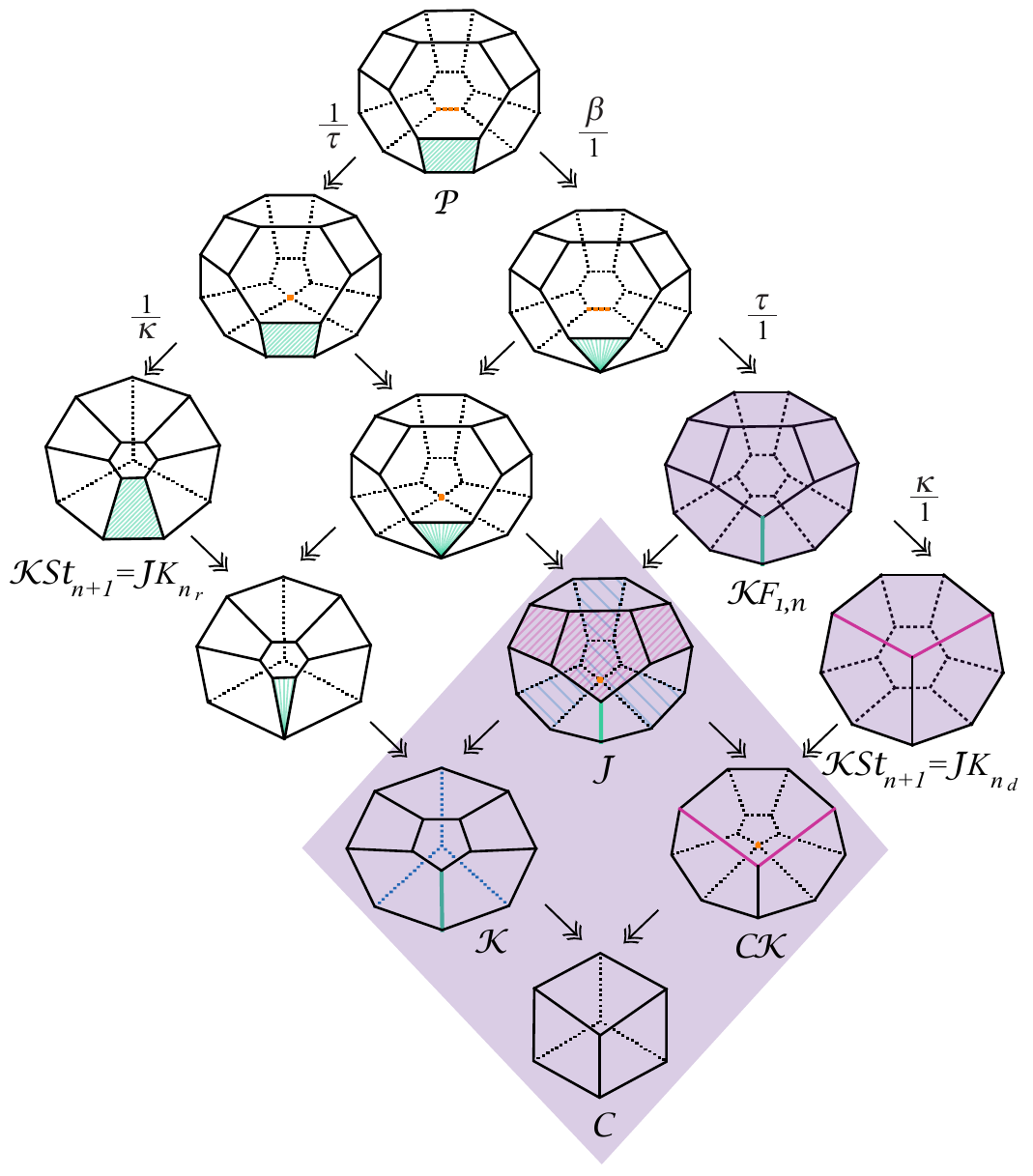}
\begin{exam}\label{bigpoly_cccc_ext_lit}
~ \end{exam} The 3-dimensional polytopes which represent the painted
trees in our 12 sequences. The four in the shaded diamond are the
cube $\mathcal {C}$, associahedron $\mathcal{K},$ multiplihedron
$\mathcal {J}$ and composihedron $\mathcal{CK}.$ The other two
shaded  polytopes are the pterahedron $\K F_{1,n}$ (fan graph
associahedron) and the stellahedron $\K St.$ The topmost is the
permutohedron. The furthest to the left is again the stellohedron.
The other four, unlabeled, are conjectured to be polytopes (clearly
they are in three dimensions--the conjecture is about all
dimensions.) Each of these corresponds to the type of tree shown in
Figure~\ref{pdiamond}, in the corresponding position. Hatching indicates that a face will collapse
under the action of the fractional map.

\begin{theorem}
The growth preorder on painted trees results in a poset for each of our 12 types of painted trees.
\end{theorem}
\begin{proof} Reflexivity and transitivity are by construction, since relations are any series of
 moves $(a,b)_i$ (including the empty series). Antisymmetry is straightforward
for most of the 12 types since the relations correspond purely to the refinement of the
 partially ordered partitions of the gaps between leaves. For the relations
that use the non-trivial forgetful maps, we note that these always involve a refinement of the partition
which subdivides the part of the partition containing 0.
This suffices to demonstrate antisymmetry since a relation cannot result in increasing
the size of the part containing 0 (that is, the half painted nodes never increase in number). For example
see (d) and (e) in Example~\ref{covering}.
\end{proof}

Here is a detail from Example~\ref{bigpoly_cccc_ext_lit}, in which
we label the vertices (and one complete triangular facet) of the
polytope of weakly ordered trees over corollas on its Schlegel
diagram.
\begin{center}\includegraphics[width=3in]{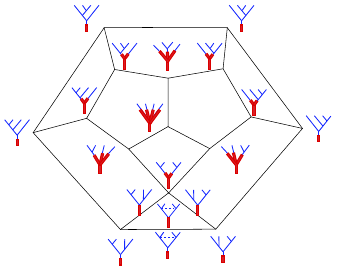}
\end{center}
In what follows we argue that most of the posets just described are
realized by inclusion of faces in a convex polytope.

\subsection{ Bijections } We conjecture
that in all 12
 cases the the painted growth partial order is realized as the face
posets of sequences of convex polytopes. Four of the cases have been
proven in previous work. These four appear as the highlighted
diamond in Example~\ref{bigpoly_cccc_ext_lit}. The polytope
sequences are the cubes, associahedra, composihedra and
multiplihedra. The latter three are shown (with pictures of painted
trees) in \cite{multi}; the fact that the cubes result from
forgetting all the branching structure is equivalent to the fact
that cubes arise when both of two product spaces are associative, as
pointed out in \cite{BV1}, also (with design tubings) in
\cite{devcube}.

\begin{figure}[htb!]
\begin{center}
\includegraphics[width=4.7in]{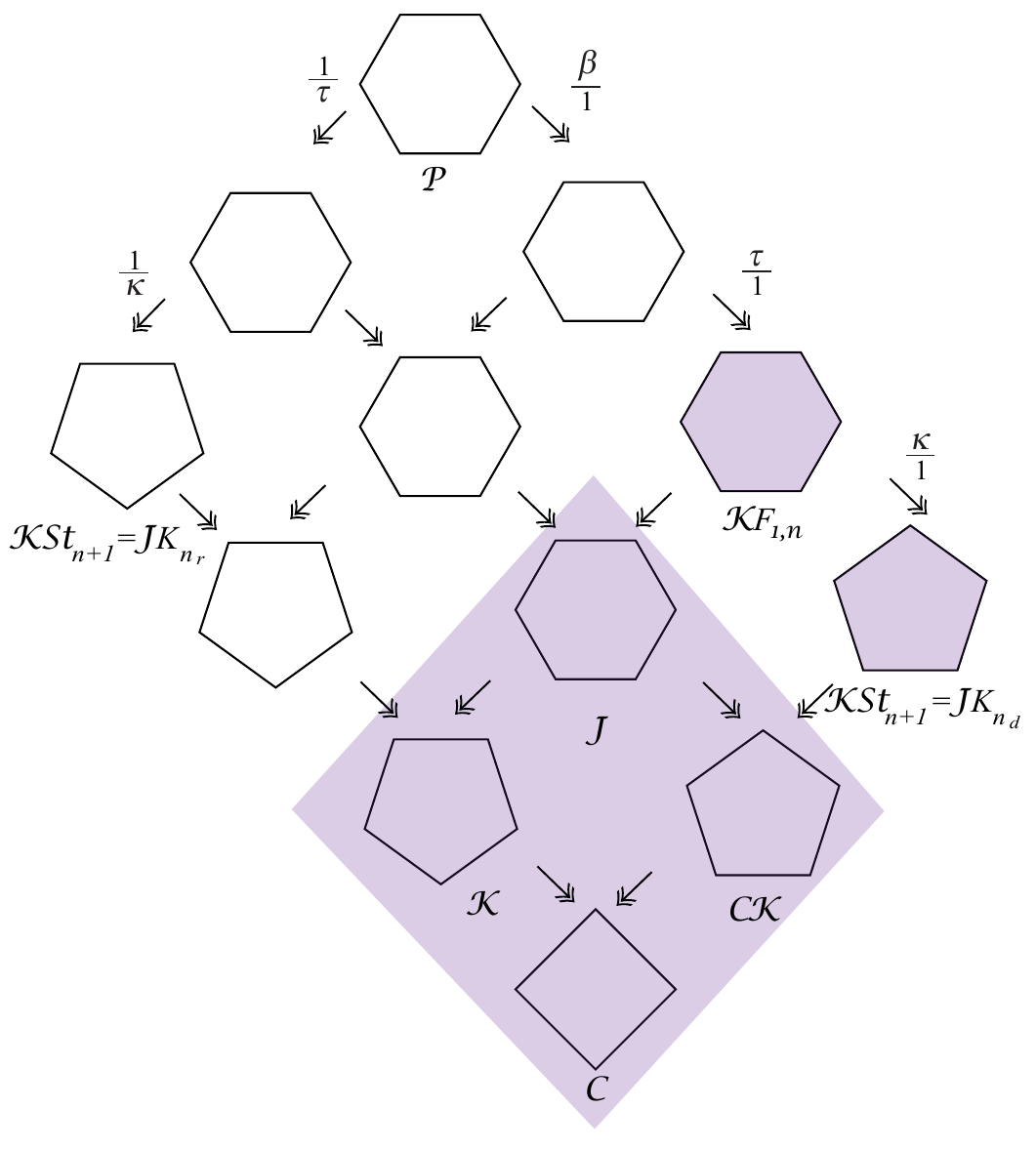}
\caption{These are the 2-dimensional terms in the same sequences as
in Example~\ref{bigpoly_cccc_ext_lit}.}\label{bigpoly_ext_2d}
\end{center}
\end{figure}

\begin{figure}[htb!]
\begin{center}
\includegraphics[width=\textwidth]{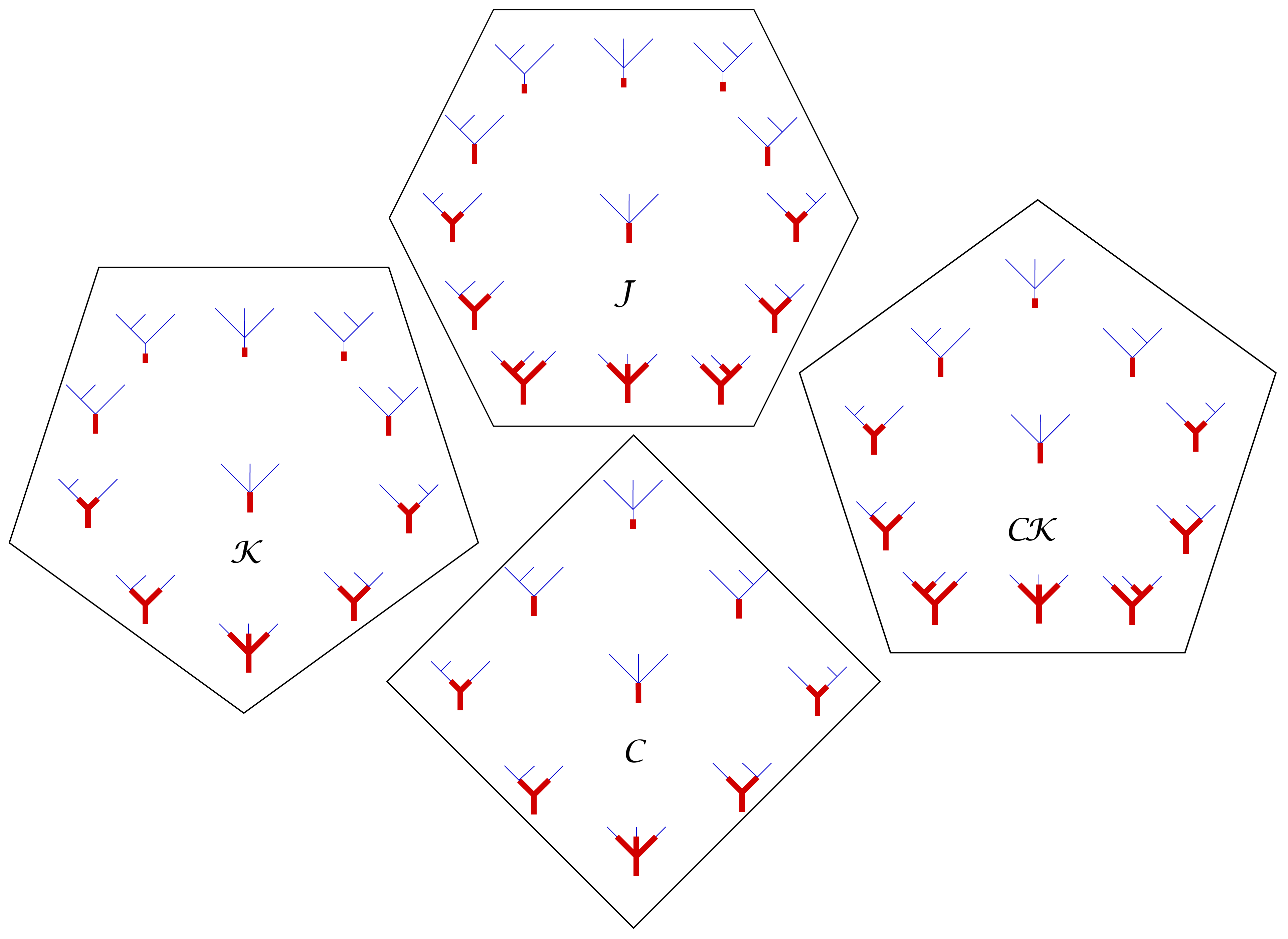}
\caption{These are the 2-dimensional terms with their faces labeled.
The same labels are used no matter where the shape occurs in
figure~\ref{bigpoly_ext_2d}. }\label{2s_labels}
\end{center}
\end{figure}

In this section four more sequences of our sets of painted trees,
with their relations, will be shown to be isomorphic as posets to
face lattices of convex polytopes. Two of these are the species
whose structure types are:   a forest of corollas grafted to a
weakly ordered tree (stellohedra) or a weakly ordered forest grafted
to a corolla (stellohedra again). A third is the species whose
structure type is the weakly ordered forest grafted to a weakly
ordered tree (permutohedra). Finally the species whose structure
type is a forest of plane rooted trees grafted to a weakly ordered
tree (pterahedra). There remain four cases in
Example~\ref{bigpoly_cccc_ext_lit} that we leave as a conjecture.
(These latter four are the ones which do not have a label naming
them under their picture).

 Some of our proofs and corollaries will
use the concept of tubings, which we review next.

\subsection{Tubes, tubings and marked tubings.}
The definitions and examples in this section are largely taken from
\cite{ForSpr:2010} and \cite{dev-real}. They are based on the
original definitions in \cite{dev-carr}, with only the slight change
of allowing a universal tube, as in \cite{dev-forc}.

\begin{definition}
Let $G$ be a finite connected simple graph.  A \emph{tube} is a set
of nodes of $G$ whose induced graph is a connected subgraph of $G$.
For a given tube $t$ and a graph $G$, let $G(t)$ denote the induced
subgraph on the graph $G$. We will often refer to the induced graph
itself as the tube. Two tubes $u$ and $v$ may interact on the graph
as follows:
\begin{enumerate}
\item Tubes are \emph{nested} if  $u \subset v$.
\item Tubes are \emph{far apart} if $u \cup v$ is not a tube in $G,$ that is, the induced subgraph of the union
 is not connected, (equivalently
none of the nodes of $u$ are adjacent to a node of $v$).
\end{enumerate}
Tubes are \emph{compatible} if they are either nested or far apart. We call $G$
itself the \emph{universal tube}.
 A \emph{tubing} $U$ of $G$ is a set of tubes of $G$ such that every pair of tubes in $U$ is
 compatible; moreover, we force every tubing of $G$ to contain (by default) its universal tube.
By the
 term $k$-\emph{tubing} we refer to a tubing made up of $k$ tubes, for $k \in \{1,\dots,n\}.$
 \end{definition}~\\\\

\begin{rema}For connected graphs our definition here is equivalent to
that in \cite{post2}. In \cite{dev-carr} and \cite{dev-real} the
universal tube is not considered a tube, nor included in tubings.
This however leads to a poset of tubings which is isomorphic to the
one in this paper.~\end{rema}

When $G$ is a disconnected graph with connected components $G_1$,
\ldots, $G_k$, there are alternate definitions in the literature. In
\cite{post} and \cite{post2}, as well as in \cite{ardila}, the
connected components are all tubes and must all be included in every
tubing. We will refer to this as a \emph{building set tubing} since
it contains all maximal elements.

Alternatively, in \cite{dev-carr}, \cite{dev-forc} and
\cite{forc-spring}, as well as in \cite{FLS3}, the additional
condition for disconnected graphs is as follows: If $u_i$ is the
tube of $G$ whose induced graph is $G_i$, then no tubing of $G$
 contains all of the tubes $\{u_1, \ldots, u_k\}$. However, the
universal tube is still included in all tubings despite being itself
disconnected.

 Parts (a)-(c) of Figure~\ref{f:legaltubing} from \cite{dev-forc} show examples of allowable tubings,
 whereas (d)-(f) depict the forbidden ones.

\begin{figure}[htb!]
\includegraphics[width=\textwidth]{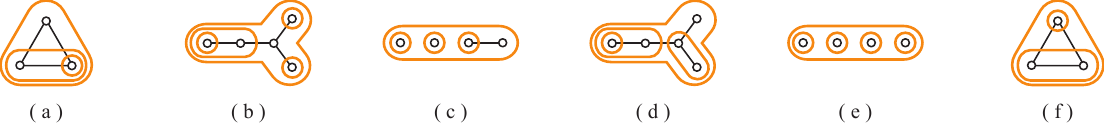}
\caption{(a)-(c) Allowable tubings and (d)-(f) forbidden tubings, figure from
\cite{dev-forc}.} \label{f:legaltubing}
\end{figure}


Let ${\mbox {Tub}(G)}$ denote the set of tubings on a graph $G.$ As
shown in {\textup{\cite[Section 3]{dev-carr}}}, for a graph $G$ with
$n$ nodes, the \emph{graph associahedron} $\KG$ is a simple, convex
polytope of dimension $n-1$ whose face poset is isomorphic to
${\mbox {Tub}(G)}$, partially ordered by the relationship $U \prec
U'$ if $U'\subseteq U.$

 The vertices of the graph associahedron are the $n$-tubings of $G.$
 Faces of dimension $k$ are indexed by $(n-k)$-tubings of $G.$ In fact,
 the barycentric subdivision of ${\mathcal K}G$ is precisely the geometric realization of the described poset of tubings.

To describe the face structure of the graph associahedra we need a
definition from \cite[Section 2]{dev-carr}.

\begin{defi}
For graph $G$ and a collection of nodes $t$, construct a new graph
$G^{*}(t)$ called the \emph{reconnected complement}: If $V$ is the
set of nodes of $G$, then $V-t$ is the set of nodes of $G^{*}(t)$.
There is an edge between nodes $a$ and $b$ in $G^{*}(t)$ if $\{a,b\}
\cup t'$ is connected in $G$ for some $t'\subseteq t$.
\end{defi}

\noindent Figure~\ref{f:recon} illustrates some examples of graphs
along with their reconnected complements.
\begin{figure}[htb!]
\includegraphics {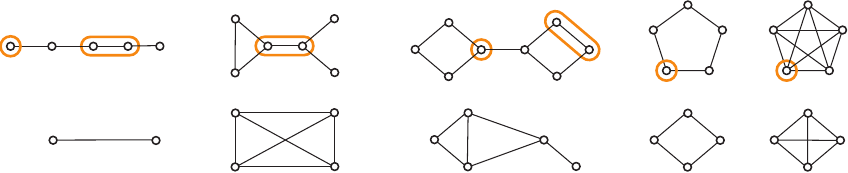}
\caption{Examples of tubes and their reconnected complements. Figure
from \cite{ForSpr:2010}.} \label{f:recon}
\end{figure}


\begin{theorem} \label{t:facet} \cite[Theorem 2.9]{dev-carr}
Let $V$ be a facet of $\KG,$ that is, a face of dimension $n-2$ of
${\mathcal K}G$, where $G$ has $n$ nodes. $V$ corresponds to  $t$, a
single, non-universal, tube of $G$ . The face poset of $V$ is
isomorphic to $ \KG(t) \times \K G^{*}(t)$.
\end{theorem}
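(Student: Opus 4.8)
The plan is to exhibit an explicit order-isomorphism between the face poset of $V$ and the face poset of the product polytope, working entirely with tubings. Recall that the face poset of a product of polytopes is the product of the two face posets, so it suffices to match the faces of $V$ with pairs consisting of a tubing of $G(t)$ and a tubing of $G^{*}(t)$. By the correspondence recalled above, a face of $\KG$ of dimension $k$ is an $(n-k)$-tubing, ordered by $U \prec U'$ iff $U' \subseteq U$; thus $V$ is the $2$-tubing $\{t, G\}$, and the faces of $V$ are exactly the tubings $U$ of $G$ with $t \in U$. I will biject these with $\mathrm{Tub}(G(t)) \times \mathrm{Tub}(G^{*}(t))$ in an inclusion-respecting way.

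First I would define the forward map. Given a tubing $U \ni t$, partition its tubes according to whether they are nested in $t$, setting $U_1 = \{u \in U : u \subseteq t\}$ and $U_2 = \{u \setminus t : u \in U,\ u \not\subseteq t\}$, and send $U \mapsto (U_1, U_2)$. The first task is to verify that $U_1$ is a tubing of $G(t)$ and $U_2$ a tubing of $G^{*}(t)$. For $U_1$ this is immediate, since a subset of $t$ is connected in $G$ iff it is connected in the induced subgraph $G(t)$, and $t$ itself is the universal tube of $G(t)$. For $U_2$, I would show that $u \not\subseteq t$ implies $u \setminus t$ is connected in $G^{*}(t)$, by shortcutting any path running through $t$ using the reconnecting edges from the definition of the reconnected complement; here the universal tube $G$ maps to the universal tube $V \setminus t$ of $G^{*}(t)$.

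The heart of the argument is the inverse on single tubes. For a tube $w$ of $G^{*}(t)$, the only subsets of $V$ that restrict to $w$ and are compatible with $t$ are $w$ and $w \cup t$, since any partial overlap with $t$ would be neither nested in nor far apart from $t$. I would then check that in every case exactly one of these is a tube of $G$ compatible with $t$ and not contained in $t$: if $w$ is connected in $G$ with no edge to $t$, then $w$ itself is the (far-apart) lift while $w \cup t$ is disconnected; otherwise $w \cup t$ is connected and is the ($t$-containing) lift, while $w$ is either disconnected in $G$ or incompatible with $t$. Combined with the identity on tubes inside $t$, this produces a two-sided inverse to the forward map at the level of individual tubes.

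The remaining, and I expect most delicate, step is compatibility. Tubes of $U_1$ and of $U_2$ live in different graphs and so are never compared, and compatibility within $U_1$ is inherited directly from $G$. What must be proved is that for two outside tubes $u, u'$, the relation ``nested or far apart in $G$'' is equivalent to ``nested or far apart in $G^{*}(t)$'' for the images $u \setminus t$ and $u' \setminus t$, and likewise for the lifts in the inverse direction. I anticipate this requiring a short case analysis over the four combinations (each of $u, u'$ either containing $t$ or far apart from $t$), again invoking the reconnection rule; this is the only place where the structure of $G^{*}(t)$ is used essentially in both directions, and so is the main obstacle. Once it is settled, the map is a bijection, and since each tube of $U$ is routed to exactly one factor, $U \subseteq U'$ holds iff $U_1 \subseteq U_1'$ and $U_2 \subseteq U_2'$; under the reverse-inclusion face order this is precisely the componentwise order of the product poset. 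A dimension check, $(\lvert t \rvert - 1) + (n - \lvert t \rvert - 1) = n - 2 = \dim V$, confirms the count.
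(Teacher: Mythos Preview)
The paper does not prove this theorem; it is quoted from \cite[Theorem~2.9]{dev-carr} and used as a black box, so there is no in-paper argument to compare against.  Your outline is the standard one and is essentially correct: split a tubing $U\ni t$ into the tubes contained in $t$ and the complements $u\setminus t$ of the remaining tubes, and invert by the unique-lift analysis you describe.

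One small point you should not skip when you write it out: in constructing the inverse you must also check cross-compatibility, i.e.\ that every tube $u\in U_1$ is compatible in $G$ with every lifted tube coming from $U_2$.  Your sentence ``tubes of $U_1$ and of $U_2$ live in different graphs and so are never compared'' justifies that the \emph{product order} ignores such pairs, but it does not yet show that the inverse image is a tubing of $G$.  The check is easy---if the lift is $w$ (far apart from $t$) then it is far apart from any $u\subseteq t$, and if the lift is $w\cup t$ then $u\subseteq t\subseteq w\cup t$---but it should be stated.  With that addition and the case analysis you already flag for the outside--outside compatibility, the argument goes through.
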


We will consider a related operation on graphs. The {\it suspension}
of $G$ is the graph ${\mathfrak S} G$ whose set of nodes is obtained
by adding a node $0$ to the set ${\mbox {Nod}(G)}$, of nodes of $G$,
and whose edges are defined as all the edges of $G$ together with
the edges $\{0,v\}$ for ${ v\in \mbox{Nod}(G)}.$

The reconnected complement of $\{0\}$ in ${\mathfrak S} G$ is the
complete graph ${\mathcal K}_n$ for any graph $G$ with $n$ nodes.
Note that the star graph $St_n$ is the suspension of the graph
which has $n$ nodes and no edge, while the fan graph $F_{1,n}$ is the
suspension of the path graph on $n$ nodes. The suspension of a
complete graph $K_n$ is the complete graph $K_{n+1}.$

It turns out that this construction of the graph multiplihedra is a
special case of a more general construction on certain polytopes
called the \emph{generalized permutahedra} as defined by Postnikov
in \cite{post}. The \emph{lifting} of a generalized permutahedron,
and a nestohedron in particular, is a way to get a new generalized
permutahedron of one greater dimension from a given example, using a
factor of $q\in[0,1]$ to produce new vertices from some of the old
ones \cite{ardila}. This procedure was first seen in the proof that
Stasheff's multiplihedra complexes are actually realized as convex
polytopes \cite{multi}.\vspace{.05in}

 Soon afterwards the lifting
procedure was applied to the graph associahedra--well-known examples of
nestohedra first described by Carr and Devadoss. We completed
    an initial study of the resulting polytopes, dubbed \emph{graph multiplihedra}, published as \cite{dev-forc}.

 This application raised the question of a
general definition of lifting using $q.$ At the time it was also unknown
whether the results of lifting, then just the multiplihedra and the
graph-multiplihedra, were themselves generalized permutahedra. These questions
were both answered in the recent paper of Ardila and Doker \cite{ardila}. They
defined nestomultiplihedra and showed that they were generalized permutohedra
of one dimension higher in each case.


We refer the reader to Ardila and Doker \cite{ardila} for the
general definitions. Here we need only the following definitions,
from \cite{dev-forc}. Combinatorially, lifting of a graph
associahedron occurs when the notion of a tube is extended to
include markings.

\begin{definition}
A \emph{marked tube} of a graph $G$ is a tube with one of three
possible markings:
\begin{enumerate}
\item a \emph{thin} tube \ \tubeA \ \ given by a solid line,
\item a \emph{thick} tube \ \tubeC \ \ given by a double line, and
\item a \emph{broken} tube  \ \tubeB \ \ given by fragmented pieces.
\end{enumerate}
Marked tubes $u$ and $v$ are \emph{compatible} if
\begin{enumerate}
\item they form a tubing and
\item if $u \subset v$ where $v$ is not thick, then $u$ must be thin.
\end{enumerate}
A \emph{marked tubing} of $G$ is a tubing of pairwise compatible
marked tubes of $G$.
\end{definition}

A partial order is now given on marked tubings of a graph $G$.  This poset
structure is then used to construct the \emph{graph multiplihedron} below.

\begin{definition} \label{d:poset}
The collection of marked tubings on a graph $G$ can be given the
structure of a poset.   For two marked tubings $U$ and $U'$ we have
$U \prec U'$ if $U$ is obtained from $U'$ by a combination of the
following four moves. Figure~\ref{f:poset} provides the appropriate
illustrations, with the top row depicting $U'$ and the bottom row
$U$.
\begin{enumerate}
\item \emph{Resolving markings}:  A broken tube becomes either a thin tube (\ref{f:poset}a) or a thick tube (\ref{f:poset}b).
\item \emph{Adding thin tubes}:  A thin tube is added inside either a thin tube (\ref{f:poset}c) or broken tube (\ref{f:poset}d).
\item \emph{Adding thick tubes}:  A thick tube is added inside a thick tube (\ref{f:poset}e).
\item \emph{Adding broken tubes}:  A collection of compatible broken tubes $\{u_1, \ldots, u_n\}$ is added simultaneously inside a broken tube $v$ only when $u_i \Subset v$ \emph{and} $v$ becomes a thick tube; two examples are given in (\ref{f:poset}f) and (\ref{f:poset}g).
\end{enumerate}
\end{definition}

\begin{figure}[ht!]
\begin{center}
\includegraphics[width=\textwidth]{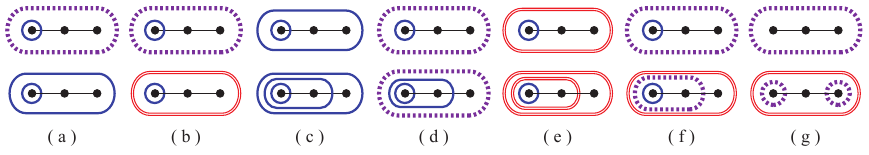}
\end{center}
\caption{The top row are the tubings and bottom row their
refinements. Figure based on original in \cite{dev-forc}.}
\label{f:poset}
\end{figure}

Here is the key idea from \cite{dev-forc}: for a graph $G$ with $n$
nodes, the \emph{graph multiplihedron} $\JG$ is a convex polytope of
dimension $n$ whose face poset is isomorphic to the set of marked
tubings of $G$ with the poset structure given above.

There are two important quotient polytopes mentioned in
\cite{dev-forc}: $\JGd$ and $\JGr$ for a given graph $G.$ The former
is called the \emph{graph composihedron}. Its faces correspond to
marked tubings, but for which no thin tubes are allowed to be inside
another thin tube.  In terms of equivalence of tubings, the face
poset of $\JGd$ is isomorphic to the poset $\JG$ modulo the
equivalence relation on marked tubings generated by identifying any
two tubings $U\sim V$ such that $U\prec V$ in $\JG$ precisely by the
addition of a thin tube inside another thin tube, as in
Figure~\ref{f:poset}(c). Thus an equivalence class of tubings can be
represented by its maximum member: a tubing with no thin tubes
inside any other thin tube. The graph composihedron is defined via
geometric realization in \cite{dev-forc}. The relations in
Figure~\ref{f:poset} still hold, but some of them appear
differently, and one (c) is no longer present in
Figure~\ref{f:poset_composi}.

\begin{figure}[ht!]
\begin{center}
\includegraphics[width=\textwidth]{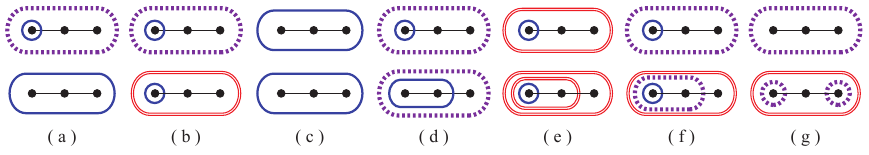}
\end{center}
\caption{The top row are the tubings and bottom row their
refinements, in the graph composihedron. These are altered versions
(shown up to equivalence) of the relations in Figure~\ref{f:poset}.
In fact (c) is the reflective relation.} \label{f:poset_composi}
\end{figure}

The polytope $\JGr$ has faces which correspond to marked tubings,
but for which no
 thick tubes are allowed to be inside another thick tube.
  In terms of equivalence of tubings, the face poset of
$\JGr$ is isomorphic to the poset $\JG$ modulo the equivalence
relation on marked tubings generated by identifying any two tubings
$U\sim V$ such that $U\prec V$ in $\JG$ precisely by the addition of
a thick tube, as in Figure~\ref{f:poset}(e).Thus an equivalence
class of tubings can be represented by its maximum member: a tubing
with no thin tubes inside any other thin tube. $\JGr$ is defined via
geometric realization in \cite{dev-forc}. For connected graphs $G$,
the polytope $\JGr$ is combinatorially equivalent to the graph
cubeahedron $\C G,$ as defined in \cite{devcube}.

The graph cubeahedron $\C K_n$ is described in \cite{devcube} as
comprising the \emph{design-tubings} on the complete graph. In
Figure~\ref{fig: biject5} we show the correspondence between labels
of vertices: range-equivalence classes of marked tubings and design
tubings.
 The isomorphism claimed in
\cite{devcube} is easily described: design tubes (square tubes)
correspond to the nodes not inside any thin or broken tube; while
round tubes in the design tubing correspond to thin tubes. Broken
tubes contain any nodes not in any tube of the design tubing.

 For
this reason we refer to the entire class of polytopes $\JGr$ as the
\emph{(general) graph cubeahedra.} In fact the description of $\C G$
using design tubings which is given in \cite{devcube} is not
difficult to extend to graphs with multiple components: we only need
to introduce the universal (round) tube. For example, the graph
cubeahedron for the edgeless graph is the hypercube with a single
truncated vertex.

The four well-known examples of polytopes from
Example~\ref{bigpoly_cccc_ext_lit} can be seen as tubing posets, as
pointed out in \cite{dev-forc}. The multiplihedra $\J = \J P$ have
face posets equivalent to the  marked tubes on path graphs $P$. The
composihedra are the domain quotients of these: $\J P_d$; and the
associahedra are the range quotients of these: $\J P_r.$  The cubes
show up as the result of taking both quotients simultaneously.

\begin{figure}[hbt!]
                  \centering
                  \includegraphics[width=\textwidth]{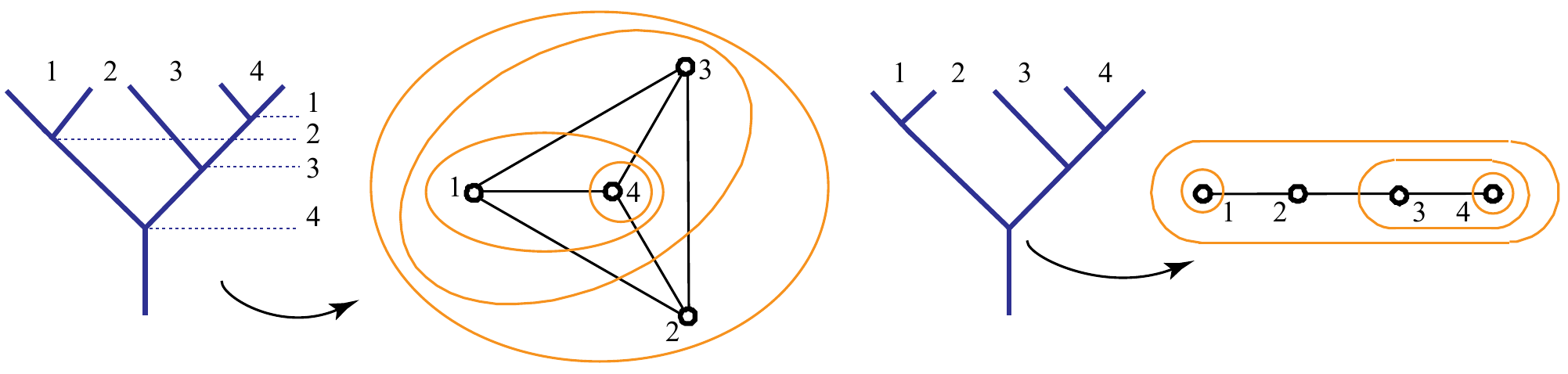}
     \caption{The permutation $\sigma = (2431) \in S_4$ pictured as an ordered tree and
                   as a tubing of the complete graph; An unordered binary tree, and its corresponding tubing. Figure from \cite{ForSpr:2010}.}\label{perm1}
  \end{figure}

  \begin{figure}[hbt!]
                  \centering
                  \includegraphics[width=\textwidth]{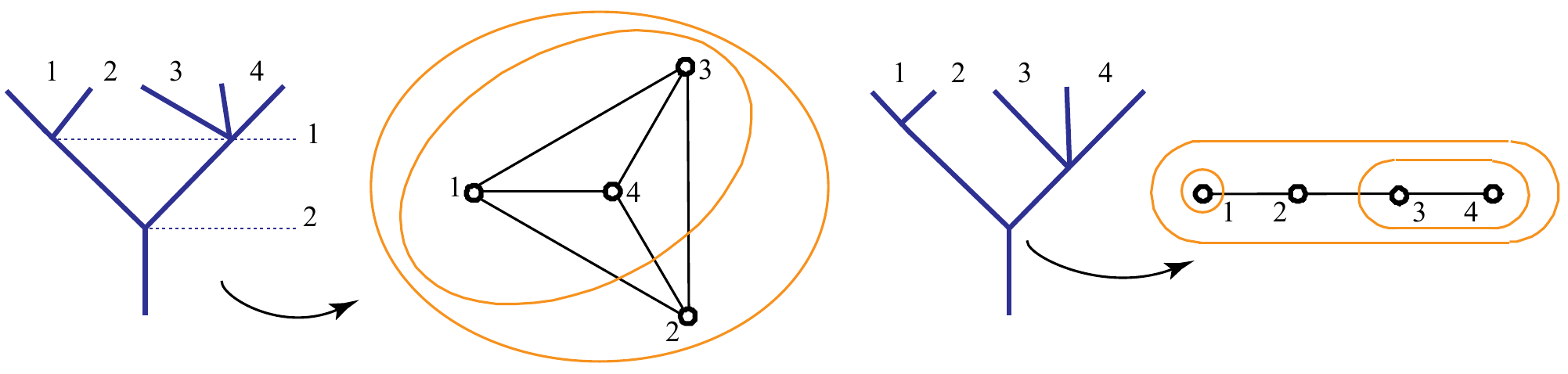}
     \caption{The ordered partition $(\{1,2,4\},\{3\})$ pictured as a leveled tree and
                   as a tubing of the complete graph; the underlying tree, and its corresponding tubing. Figure from \cite{ForSpr:2010}.}\label{perm2}
  \end{figure}

\subsection{Permutohedra}

First we prove that the poset of painted trees made by grafting a
weakly ordered forest to a weakly ordered base tree  is the face
poset of a polytope. It turns out that for painted trees with $n$
leaves this polytope is the permutohedron $\mathcal{P}_n.$ It is
well known (see \cite{LR}) that the permutohedron has faces indexed
by the weak orders, which in turn may be represented by weakly
ordered trees. The face poset is the partial ordering of these trees
by refinements.


\begin{theorem}There is an isomorphism $\varphi$ from the poset of $(n+1)$-leaved weakly ordered
trees to the painted growth preorder of $n$-leaved weakly ordered
forests grafted to weakly ordered trees.

\end{theorem}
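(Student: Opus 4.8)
The plan is to identify both posets with the face poset of the permutohedron $\mathcal{P}_n$, realized as the refinement poset of ordered set partitions (weak orders) of $[n]$, and to exhibit $\varphi$ as the comparison of these two encodings. On the source side this is the classical dictionary recalled just above the statement: a weakly ordered tree with $n+1$ leaves is determined by the ordered partition of its $n$ gaps $\{1,\dots,n\}$ induced by the weak order on nodes, and refinement of trees corresponds to refinement of ordered partitions, where a covering relation is the splitting of one block into two consecutive blocks by growing an internal edge. So it suffices to encode the target painted trees by the same kind of data and to check that the two encodings agree under $\varphi$.

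First I would describe $\varphi$ explicitly. Given a weakly ordered tree $T$ with $n+1$ leaves, I delete its leftmost leaf; the node $v$ formerly incident to that leaf, together with its level in the weak order, is declared the paint line (if $v$ had only two inputs it becomes bivalent, which is exactly a color change along an edge). Every node strictly below the level of $v$ becomes painted, and these form the weakly ordered base tree; every node strictly above becomes unpainted, and these form the weakly ordered forest grafted on; the nodes tied with $v$ become half-painted. The result $\varphi(T)$ is a painted tree with $n$ leaves of the required type. Equivalently, in the language of gaps, $\varphi$ retains the leftmost gap as a distinguished marker recording the paint line and treats the remaining $n-1$ gaps as the genuine gaps of $\varphi(T)$, with a gap lying below, even with, or above the marker according as its node is painted, half-painted, or unpainted.

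Next I would produce the inverse by reinserting the marker leaf and reading off the paint line from the distinguished block, and observe that this is a bijection: a painted tree of this type is precisely an ordered partition of an $n$-element set, namely the $n-1$ genuine gaps together with the paint-line marker. Hence both sides are counted by the ordered Bell number $a_n$ (equal to the number of faces of $\mathcal{P}_n$), and $\varphi$ is a bijection compatible with the two identifications with ordered partitions. A key simplification here is that the growth preorder for this particular type invokes only the identity forgetful map, as noted after the definition of the painted growth preorder; so growth is nothing but refinement of the painted weak order on gaps, and a covering relation on either side is again the splitting of a single block into two consecutive blocks.

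The main obstacle, and where I would concentrate the argument, is matching the covering relations at the paint line. Splitting a block lying entirely above or entirely below the marker corresponds to growing a purely unpainted or a purely painted internal edge, which is routine. The delicate case is splitting the marker's block, which on the painted side is the resolution of half-painted nodes, governed by the special rules that all half-painted nodes remain at one common level, that painted nodes never surpass them, and that when painted edges are grown from a half-painted node they must all appear so that the paint line can be drawn horizontally. I would verify that these constraints are exactly the constraints on splitting the marker's block into a part below the marker (newly painted), the marker itself, and a part above (newly unpainted), so that each admissible half-painted resolution is a single block split and conversely. Establishing this equivalence of covering relations shows that $\varphi$ and $\varphi^{-1}$ are both order preserving, hence that $\varphi$ is the desired poset isomorphism; as a bonus it confirms that this painted growth preorder is genuinely a poset, namely the face poset of $\mathcal{P}_n$.
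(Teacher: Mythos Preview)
Your proposal is correct and follows essentially the same approach as the paper: the bijection you describe---delete the leftmost leaf, let the level of its incident node become the paint line, with nodes above/tied/below becoming unpainted/half-painted/painted---is exactly the paper's map, and your case analysis for order preservation (refinements not touching the marker's block versus those that split it) is the paper's two-case check. Your additional framing through ordered partitions of $[n]$, identifying gap~$1$ with the paint-line marker so that $\varphi$ becomes the identity on ordered partitions, makes the argument slightly more conceptual than the paper's pictorial treatment, but the content is the same.
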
\label{perma}
\begin{proof}
The isomorphism and its inverse are described as switching between
the paint line and an extra branch. Given a weakly ordered tree $t$,
we find $\varphi(t)$ by adding a paint line at the level of
left-most node of $t$, and then deleting the left-most branch of
$t$. Finally the remaining nodes are ordered, above and below the
paint line, according to their original vertical order in $t.$ The
inverse is straightforward. Here is a picture of the process:

\includegraphics[width=\textwidth]{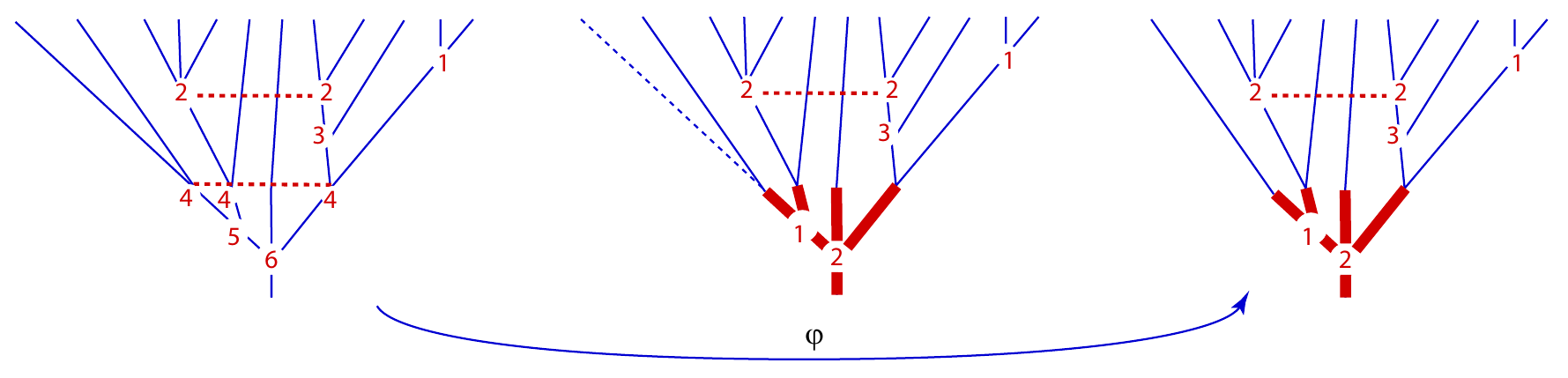}

Next we argue that the isomorphism just described respects the poset
structures. If $a\prec b$ for two weakly ordered trees, we have that
the weak ordering of the nodes of $a$ is a refinement of the weak
ordering for $b.$ We can visualize this refinement as the growing of
some internal edges of $a$ to break ties between nodes that were at
the same level.  If the refinement involves breaking a tie that does
not include the left-most node (see level 2 in the above picture),
then the same growing produces the same relation between the painted
tree images $\varphi(a)$ and $\varphi(b).$ If the growing does break
a tie involving the left-most node (see level 4 in the above
picture), then the image of $b$ may differ from that of $a$ only in
that the set of nodes of $\varphi(a)$ which coincide with the paint
line will be a subset of those in $\varphi(b)$. This can be seen as
growing edges at some half-painted nodes. Here is an example of the
latter case, with trees related to those in the above pictured
example:

\includegraphics[width=4.5in]{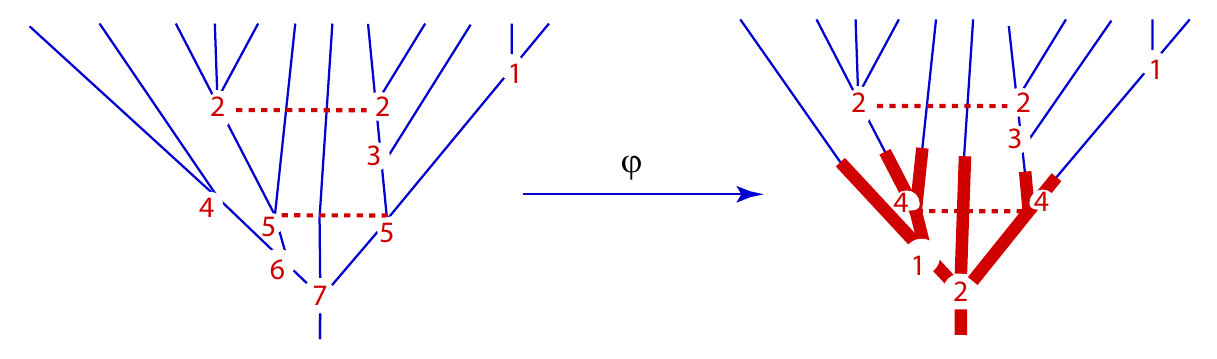}
\end{proof}

This theorem immediately implies that the poset of $n$-leaved weakly
ordered forests grafted to weakly ordered trees is isomorphic to the
face poset of the $n$-dimensional permutohedron. That is because the
poset of $(n+1)$-leaved weakly ordered trees is well known to
represent the face poset of the permutohedron (via seeing each tree
as a weak order of $[n]$, that is, an ordered partition.)

A corollary, from \cite{dev-forc}, is that the poset of $n$-leaved
weakly ordered forests grafted to weakly ordered trees is isomorphic
to the face poset of the $n$-dimensional graph multiplihedron of the
complete graph.

\subsection{Stellohedra}

Now we prove that the poset of painted trees made by grafting a
forest of corollas to a weakly ordered base tree is the face poset
of a polytope. It turns out that for painted trees with $n+1$ leaves
this polytope is the graph-associahedron $\KG$ where $G$ is the star
graph $St_{n}.$ ~

Recall that the star graph $St_{n}$ is defined as follows: we use
the set $\{0,1,2,\dots,n\}$ as the set of nodes. Edges are
$\{0,i\}$ for $i=1,\dots,n.$

\begin{theorem}\label{stella1}
The poset of tubings on the star graph $St_{n}$ is isomorphic to the
poset of $n$-leaved forests of corollas grafted to weakly ordered
trees.
\end{theorem}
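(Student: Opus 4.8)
The plan is to exhibit an explicit order isomorphism $\Phi$ from the face poset of $\K St_n$ (the tubings of $St_n$ under the reverse-inclusion order $U\prec U'$ iff $U'\subseteq U$) to the painted growth preorder on $(n+1)$-leaved forests of corollas grafted to weakly ordered trees, and then to check that $\Phi$ and its inverse carry covering relations to covering relations. The first step is to read off the anatomy of tubes. Since $\mathrm{Nod}(St_n)=[n]_0$ with every edge incident to the center $0$, a connected induced subgraph is either a leaf singleton $\{i\}$ with $i\in[n]$, or a \emph{center-tube} $\{0\}\cup S$ for some $S\subseteq[n]$ (the universal tube being $S=[n]$). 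Two center-tubes always share $0$, so they can never be far apart and hence are compatible only when nested; thus the center-tubes of a tubing $U$ form a chain $\{0\}\cup S_1\subsetneq\cdots\subsetneq\{0\}\cup S_k=[n]_0$. A leaf singleton is far apart from every other leaf singleton and is compatible with $\{0\}\cup S$ exactly when $i\in S$, so the singletons of $U$ form a set $T\subseteq[n]$ subject only to $T\subseteq S_1$, the innermost center-tube. Hence a tubing of $St_n$ is precisely a pair consisting of a chain $S_1\subsetneq\cdots\subsetneq S_k=[n]$ (with $S_1$ possibly empty) together with a set $T\subseteq S_1$.

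Next I would define $\Phi$. Identify the $n$ gaps of an $(n+1)$-leaved painted tree with the leaves $[n]$ of $St_n$, viewing the center $0$ as the suspension node sitting at the paint line. Send the chain to the ordered set partition with blocks $S_1,\,S_2\setminus S_1,\,\ldots,\,S_k\setminus S_{k-1}$ read outward from the paint line; by the standard correspondence between weak orders and weakly ordered trees this is exactly the weakly ordered (painted) base tree. The set $T\subseteq S_1$ then records which of the innermost gaps are lifted above the paint line into (unpainted) corollas, and the constraint $T\subseteq S_1$ says exactly that only the leaves available at the top painted level may be so separated. Conversely, from such a painted tree one recovers the chain from the painted vertical weak order of its gaps (the center-tubes being the painted down-sets) and recovers $T$ from the corolla branchings; I would verify that the half-painted-node conventions of Section~\ref{def}, namely that all edges at a half-painted node are forced and that the painted nodes form an order ideal, are precisely the two conditions ``$T\subseteq S_1$'' and ``center-tubes nested,'' so that the two assignments are mutually inverse. (A count check confirms this: for $n=2$ the pairs $(\text{chain},T)$ number $11$, matching the pentagon $\K St_2$.) A structural sanity check comes from Theorem~\ref{t:facet}: the facet at tube $\{0\}$ is $\mathrm{pt}\times\K St_n^{*}(\{0\})=\K K_n$, the permutohedron, which matches the painted trees with trivial corollas, i.e.\ the weakly ordered trees of the permutohedron theorem; and the facet at a leaf singleton $\{i\}$ is $\K St_{n-1}$, matching deletion of one corolla-leaf.

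For order preservation I would match covering relations. Covers in the tubing poset are of two kinds: adjoining a center-tube that refines the chain, and adjoining a leaf singleton inside the innermost center-tube. Under $\Phi$ the first is the growth of a \emph{painted} internal edge, a strict refinement of the weak order below the paint line, which is a painted growth move needing only the identity forgetful map; the second is the growth of an \emph{unpainted} edge that pulls a single top-level gap into its own corolla, which is a painted growth move followed by $\kappa$, exactly as in the stellohedron covering relations (c) and (e) displayed earlier. Conversely, every painted growth move of a forest of corollas over a weakly ordered tree either refines the painted weak order or separates top gaps into corollas, and I would argue these exhaust the possibilities (corolla structure admits no internal-edge growth beyond what $\kappa$ collapses), so both $\Phi$ and $\Phi^{-1}$ are order preserving.

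The technical heart is this last matching together with the compatibility bookkeeping: one must show that the single constraint $T\subseteq S_1$ on tubings corresponds \emph{exactly} to the legal painted-tree configurations, in particular to the rule that all edges at a half-painted node are formed simultaneously, and that $\kappa$ accounts precisely for the collapse of unpainted binary growth into corollas and introduces no spurious identifications. Fixing the orientation of the paint line relative to the nesting, so that ``innermost center-tube'' equals ``top painted level,'' and confirming that no extra relations are created by $\kappa$, is where the argument must be most careful.
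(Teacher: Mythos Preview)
Your proposal is correct and follows essentially the same strategy as the paper's proof. Both identify the smallest center-tube $t_0=\{0\}\cup S_1$ as the object playing the role of the paint line, read the nested chain of center-tubes outside $t_0$ (equivalently, a tubing on the reconnected complement $K_{n-|S_1|}$) as the painted weakly ordered base, and interpret the singleton tubes inside $t_0$ as the unpainted corolla nodes; the covering-relation check is parallel. Your presentation is slightly more self-contained (you classify tubes directly as center-tubes versus leaf singletons rather than invoking the simplex and permutohedron as graph associahedra of the edgeless and complete graphs), and your two cover types collapse the paper's three, since the paper distinguishes ``add a new $t_0$'' from ``add a tube properly containing $t_0$''. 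One small wording slip: adding a new innermost center-tube is not a refinement strictly \emph{below} the paint line but rather grows painted edges \emph{from} half-painted nodes (the paint line moves), which is the paper's middle case; this does not affect correctness.
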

\begin{proof}
We first note that any tubing $T$ of the star graph includes a
unique smallest tube $t_0$ which contains node 0.  All other tubes
of $T$ are either contained in $t_0$ or contain $t_0,$ since the
node 0 is adjacent to all other nodes. The tubes contained in $t_0$
form a tubing of an edgeless graph. The tubes containing $t_0$ form
a tubing on the reconnected
 complement of $t_0$, which is the complete
graph on the nodes not in $t_0.$ Here the key idea is that the tube
$t_0$ is analogous to the half-painted nodes. See
Figure~\ref{fig:stello_bij_big}.

Now we use two facts shown in \cite{dev-carr}: that the
permutohedron is combinatorially equivalent to the
graph-associahedron of the complete graph, and that the simplex is
combinatorially equivalent to the graph-associahedron of the
edgeless graph, which in turn is equivalent to the Boolean lattice
of subsets of its nodes. Recall that the permutohedron is also
indexed by the weakly ordered trees, leading to an isomorphism
between tubings and trees as seen in Figures~\ref{perm1}
and~\ref{perm2}.
\begin{figure}[ht!]\centering
                  \includegraphics[width=\textwidth]{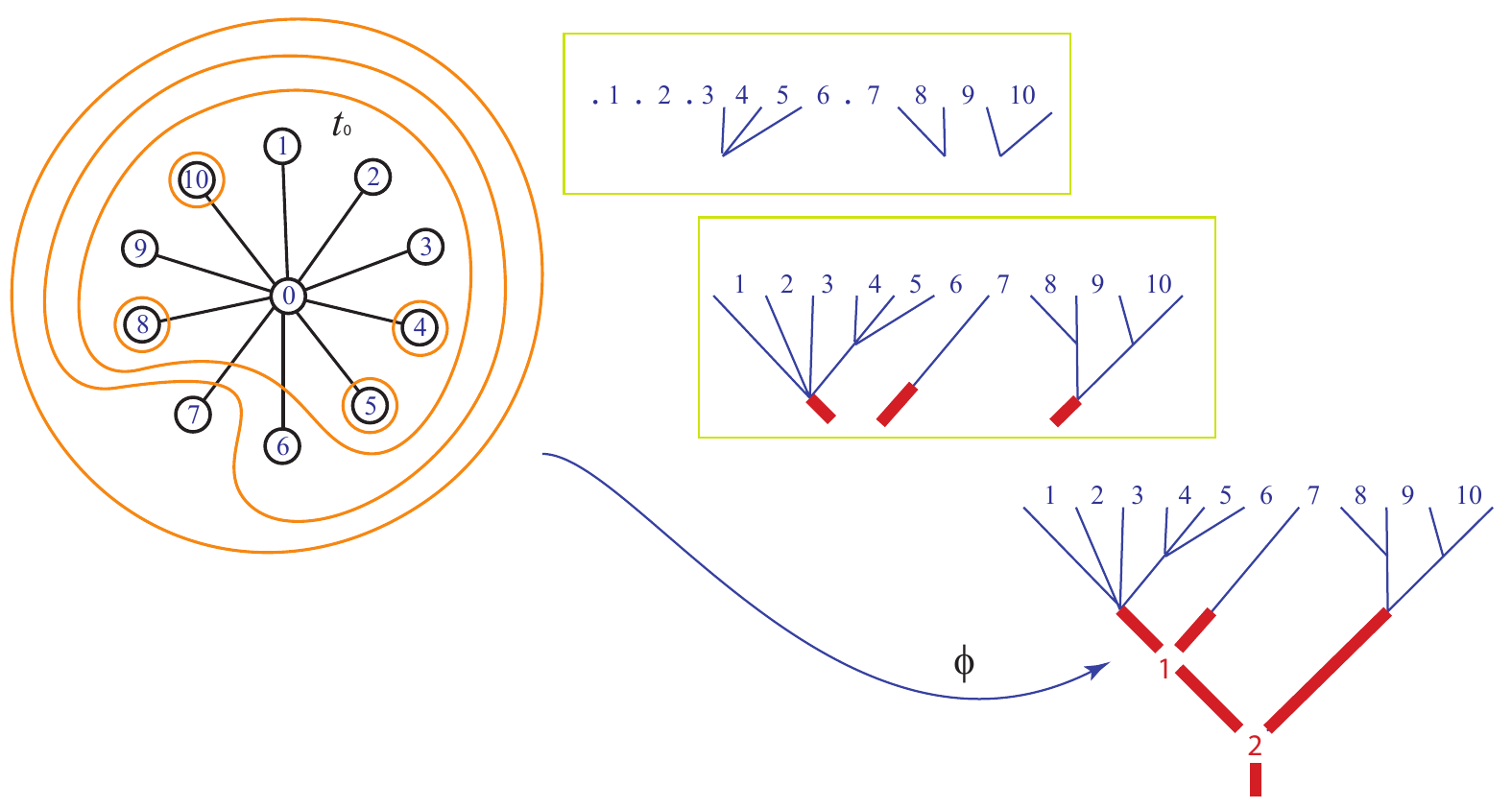}
\caption{A tubing $T$ on the star graph and its bijective image in
the corollas over weakly ordered trees. The three steps are shown
for constructing $\phi(T).$}\label{fig:stello_bij_big}
\end{figure}

Thus the bijection $\phi$ we want takes a tubing $T$ on the star
graph $S = S_n$ to a painted tree. This bijection is constructed
from the bijection $\alpha$ from tubings on an edgeless graph to
subsets of $n$ gaps between leaves (corresponding to nodes in a
unpainted forest of corollas); together with the bijection $\beta$
from tubings on a complete graph with $j$ vertices to weakly ordered
trees with $j+1$ leaves.

The construction of $\phi$ proceeds as follows.
 First the nodes $1,\dots,n$ of the star graph
correspond to the gaps (between leaves) $1,\dots,n$ of the output
tree.
 The tubing of the subgraph inside of $t_0$ maps via $\alpha$ to a subset of $[n],$ and that subset is
 precisely the
subset of the gaps which correspond to unpainted nodes (of corollas)
in our output tree. Second, nodes that are inside $t_0$ but not
inside any smaller tube determine the gaps that coincide with the
paint line, half-painted nodes on our output tree.
 Finally the
tubing outside of $t_0$  maps via $\beta$ to the painted  weakly
ordered tree. The inverse of $\phi$ is the straightforward reversal
of these steps. An example is seen in
Figure~\ref{fig:stello_bij_big}.

Checking that this bijection preserves the ordering is
straightforward. Covering relations in the stellohedron are face
inclusions, which each correspond to adding one tube to a tubing.
The addition of a singleton tube  inside of $t_0$ corresponds under
our bijection to growing an unpainted edge at a half-painted node
(and then applying $\kappa.$)

The addition of a tube just inside of $t_0$ that contains all the
singleton tubes, so in effect creating a new $t_0$, corresponds to
growing some painted edges from half-painted nodes.

The addition of a tube outside of $t_0$ corresponds to growing a
painted edge at a painted node. The three possibilities are
illustrated here: the first has a singleton tube added (around
vertex 9) compared with the original tubing in
Figure~\ref{fig:stello_bij_big}.

\includegraphics[width=\textwidth]{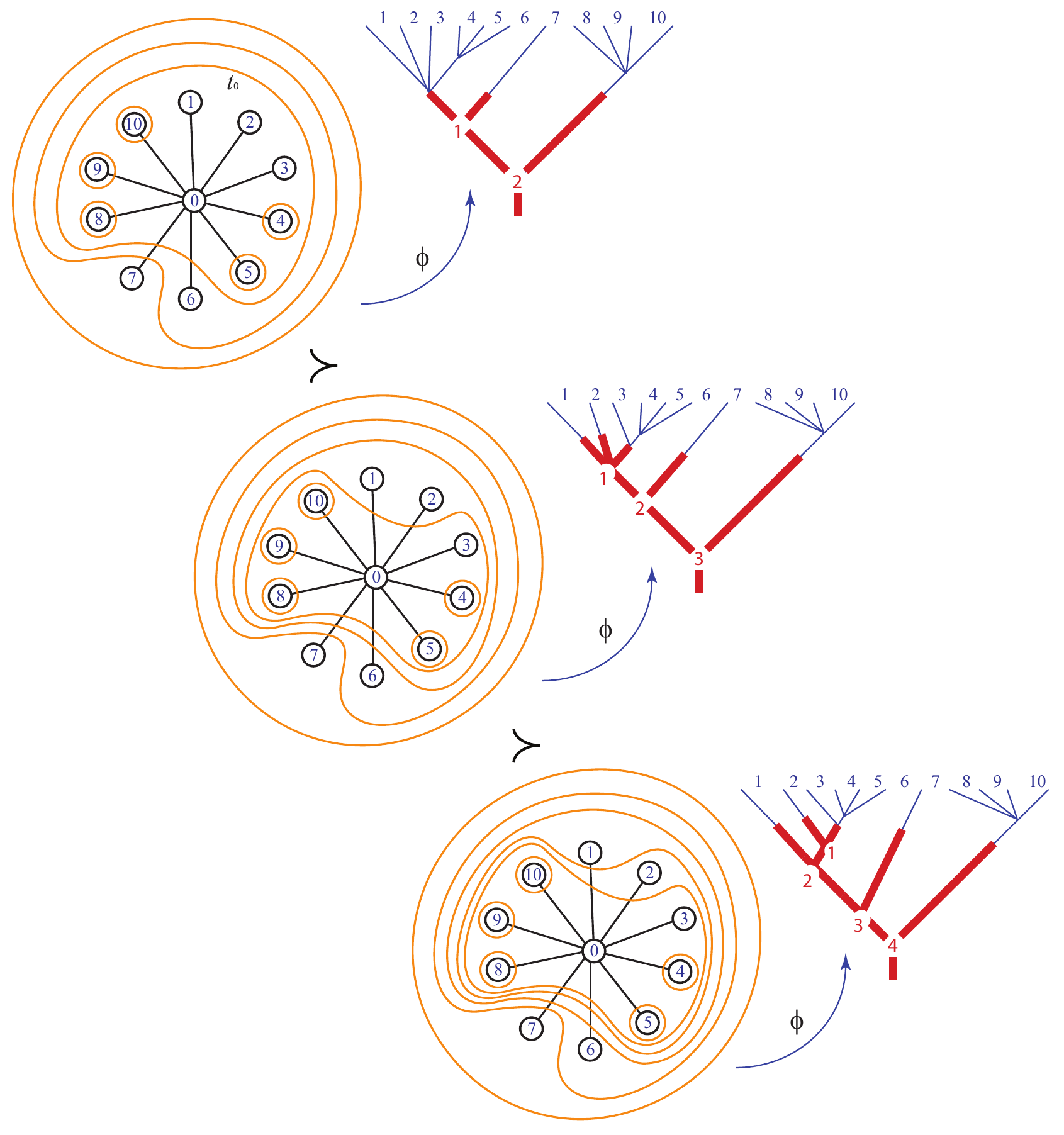}
\end{proof}

The isomorphism of vertices of the polytopes in 3 dimensions is shown
pictorially in Figure~\ref{fig:biject1}.

\begin{figure}[htb!]\centering
                  \includegraphics[width=\textwidth]{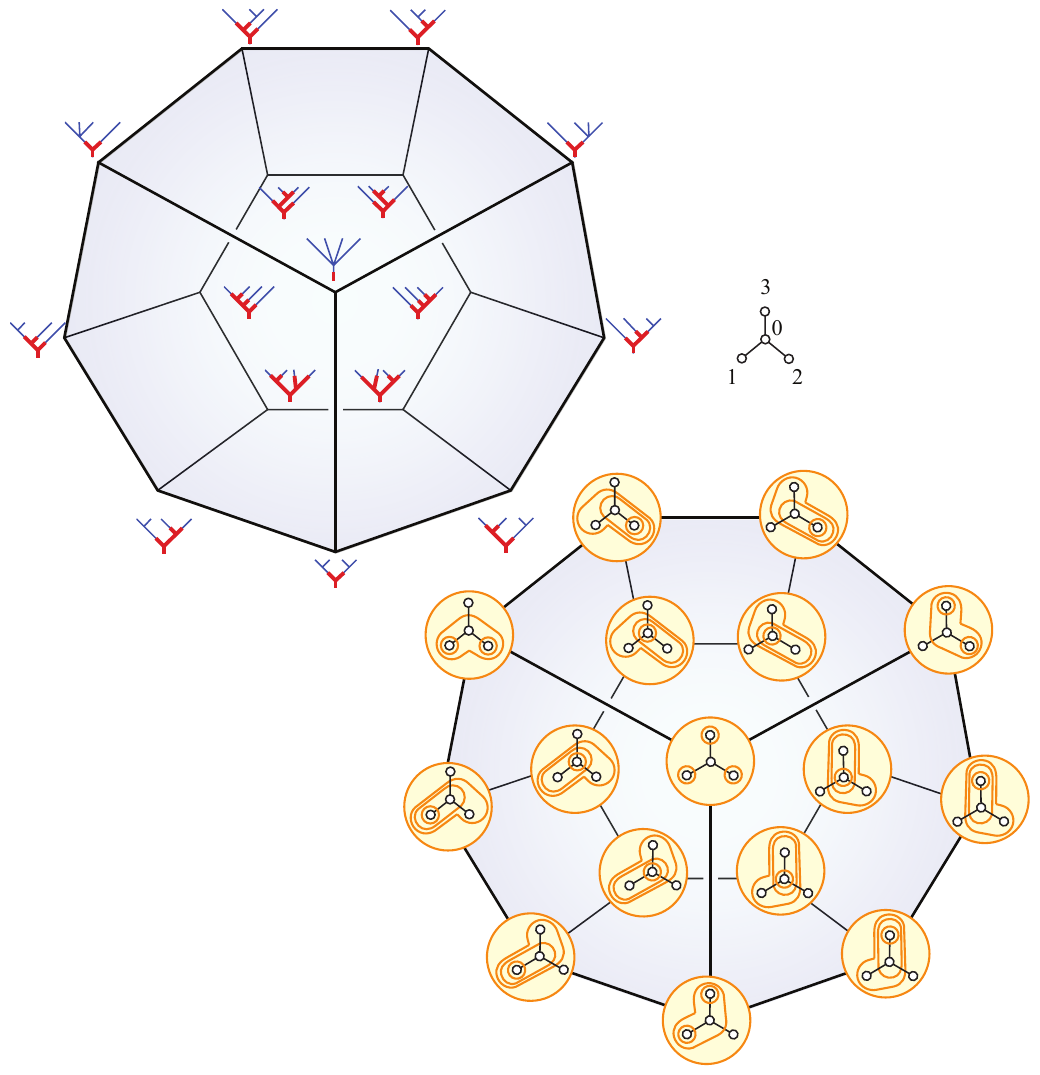}
\caption{Two pictures of the stellohedron, via the bijection in
Theorem~\ref{stella1}.}\label{fig:biject1}
\end{figure}

Next we show that the stellohedra can also be seen as the domain and
range quotients $\JG_d$ and $\JG_r$ of the multiplihedron $\JG$
where $G$ is the complete graph.

\begin{theorem}\label{lift}
The graph-composihedron for a complete graph $\K_n$ is
combinatorially equivalent to the stellohedron for the star-graph
$St_{n+1}.$

\end{theorem}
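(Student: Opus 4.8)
The plan is to prove the equivalence by reducing both sides to posets of painted trees already identified in this paper, so that only one genuinely new dictionary entry has to be checked. Recall from the corollary to Theorem~\ref{perma} (via \cite{dev-forc}) that the face poset of the graph multiplihedron $\J\K_n$ of the complete graph is the poset of weakly ordered forests grafted to weakly ordered trees. The graph composihedron $\J\K_{n,d}$ is by definition the domain quotient of $\J\K_n$: the poset of marked tubings modulo the relation that inserts a thin tube inside another thin tube (Figure~\ref{f:poset_composi}). First I would translate this quotient into the tree model. Under the standard dictionary between marked tubings and painted trees (\cite{multi},\cite{dev-forc}), thin tubes record the branching and weak order of the unpainted top forest while thick tubes record those of the painted base tree; collapsing nested thin tubes is therefore exactly the forgetful map $\frac{\kappa}{1}$, which replaces each unpainted tree by a corolla while leaving the painted base untouched.

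Granting that translation, the maximum representatives of the domain-equivalence classes are precisely the forests of corollas grafted to weakly ordered trees, and Theorem~\ref{stella1} identifies this poset with the poset of tubings on the star graph, i.e.\ with the face poset of the stellohedron, giving $\J\K_{n,d}\cong\K St_{n+1}$. A second, more geometric route --- natural given the suspension machinery just introduced --- is to build the isomorphism directly on tubings, mirroring the map $\phi$ in the proof of Theorem~\ref{stella1}. Because $\K_n$ is complete, any two compatible tubes are nested, so a tubing is a flag of subsets; the marking-compatibility rule (a tube strictly inside a non-thick tube must be thin) then forces each marked tubing, read from the smallest tube outward, to consist of a block of thin tubes, at most one broken tube, and a block of thick tubes. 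One sends the thick block to the tubes lying outside the minimal tube $t_0\ni 0$ of the star graph (whose reconnected complement is again complete, supplying the weakly ordered base tree), and the thin-and-broken data to $t_0$ together with the singleton tubes inside it (an edgeless, hence simplicial, tubing supplying the corolla forest and the paint line). The domain quotient --- no thin tube inside a thin tube --- is exactly what forces the inside-$t_0$ data to be an edgeless tubing rather than a full flag, which is what makes the target the star graph rather than $\J\K_n$ itself.

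The crux, and the one new verification, is the dictionary claim of the first paragraph: that the composihedron's generating relation ``add a thin tube inside a thin tube'' coincides on the nose with the fibers of $\frac{\kappa}{1}$, and that the resulting collapse is an isomorphism of posets. Concretely I would run each of the four moves of Definition~\ref{d:poset}, as they survive into Figure~\ref{f:poset_composi}, through the bijection and check that it becomes a single edge-growth (or a $\kappa$-forgetting) among forests of corollas over weakly ordered trees, and conversely that every covering relation of the latter poset arises this way. The rest is bookkeeping, including matching the subscript on the star graph to the node count of $\K_n$ dictated by the suspension correspondence, under which the reconnected complement of the apex $0$ of the star graph is exactly $\K_n$.
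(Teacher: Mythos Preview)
Your proposal is sound, and your second route is essentially the paper's own argument: the paper constructs a direct bijection $\phi'$ from marked tubings on $\K_n$ (representatives with no nested thin tubes) to forests of corollas over weakly ordered trees, then invokes Theorem~\ref{stella1} to reach the star graph. The only difference is that you phrase the map as going straight to star-graph tubings, whereas the paper inserts the intermediate painted-tree step; since Theorem~\ref{stella1} is already in hand these are the same bijection viewed from two ends. The paper's verification of order-preservation runs through exactly the relations of Figure~\ref{f:poset_composi} that you list, matching (a) and (d) to growing unpainted edges at half-painted nodes, (b) and (f) to growing painted edges at half-painted nodes, and (e) to growing a painted edge at a painted node.

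Your first route, factoring through the graph multiplihedron $\J\K_n$ via the corollary to Theorem~\ref{perma}, is a genuine alternative the paper does not take. It has the virtue of making the domain-quotient structure conceptually transparent, but the ``dictionary claim'' you flag is real additional work: the paper never spells out the thin/thick/broken correspondence for $\J\K_n$ at the level of weakly ordered forests over weakly ordered trees, so you would have to establish that before you can read off the quotient. One small correction there: the forgetful map in play is not $\frac{\kappa}{1}$ alone but the composite $\frac{\kappa\tau\beta}{1}$, since passing from a weakly ordered forest to a forest of corollas requires first discarding the global weak order on the forest ($\beta$), then the individual orders ($\tau$), then the branching ($\kappa$). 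Once that dictionary is fixed, the remaining check collapses to the same Figure~\ref{f:poset_composi} verification, so the two routes are comparable in effort; the paper's choice simply avoids loading the multiplihedron dictionary when a direct map suffices.
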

~
\begin{proof}
We can most easily see the isomorphism by using the stellohedra just
found in Theorem~\ref{stella1},   that is, by showing an isomorphism
to painted trees.


 We show a bijection $\phi'$ from the graph-composihedron of the complete graph to the set $\widetilde{C/D}$ of forests
of corollas grafted to weakly ordered trees.  The nodes $1,\dots,n$
of the complete graph correspond to the gaps (between leaves)
$1,\dots,n$ of the tree.
 Here the key idea is that now a broken tube $t_0$ plays the
same role as the half-painted nodes in the corresponding tree, and a
single thin tube the role of the unpainted nodes. The steps in the
construction of the bijection $\phi'$ are analogous to those in the
proof of Theorem~\ref{stella1}, as follows:

The bijection $\phi'$ takes as input a marked tubing on the complete
graph with no thin tubes inside another thin tube. It outputs a
painted tree as follows: if there is a single thin tube
$t=\{v_1,v_2,\dots,v_k\}$ then the like-numbered gaps of the output
will correspond to nodes of  unpainted corollas. Nodes that are
inside a broken tube but not inside any thin tube of the input
correspond to the gaps of the output that correspond to half-painted
nodes. Any nodes outside of all the thin or broken tubes in the
input correspond to nodes of the weakly ordered base tree in the
output, and this mapping is via the previously mentioned bijection
between weak orders and tubings on the complete graph. Note that the
reconnected complement of the largest thin or broken tube is a
complete graph. The inverse of $\phi'$ is the straigtforward
reversal of these steps. An example of the bijection $\phi'$ is seen
in Figure~\ref{fig:stello_bij_composi}.

We check that this bijection $\phi'$ preserves the ordering. Note
that the relations are simpler than in general for marked tubes
since the tubings must all be completely nested, and since thin
tubes inside of thin tubes are ignored (via the equivalence). Thus
the relations in the  Figure~\ref{f:poset_composi}(c)
and~\ref{f:poset_composi}(g) need not be checked. The relations in
Figure~\ref{f:poset_composi}(a) and (d) correspond to growing
unpainted edges from half-painted nodes. The relations in
Figure~\ref{f:poset_composi}(b) and (f) correspond to growing
painted edges from half-painted nodes.  The relation in
Figure~\ref{f:poset_composi}(e) corresponds to growing  a painted
edge from a painted node. Examples of the preservation of ordering
via $\phi'$ re seen in Figure~\ref{fig:stello_bij_composi_ord}. 3-dimensional
examples are seen in Figure~\ref{fig: biject2}. See
Figure~\ref{chains} for some isomorphic chains.
\end{proof}

\begin{figure}[ht!]\centering
                  \includegraphics[width=4.75in]{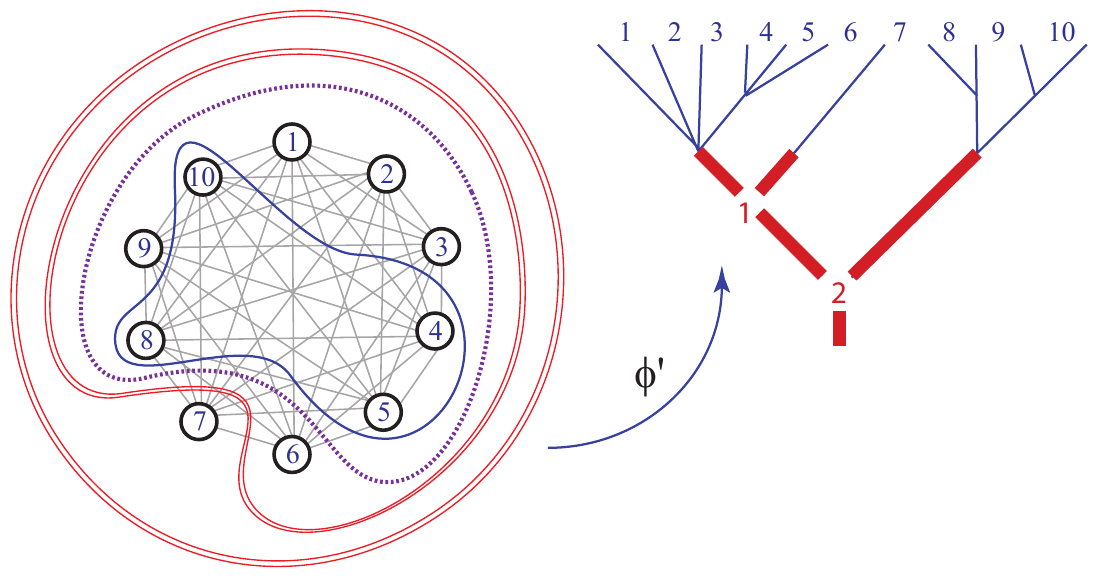}
\caption{A marked tubing on the complete graph, representing an
element of the complete-graph-composihedron (no structure is shown
inside the thin tube) and its bijective image in the forest of
corollas over a weakly ordered tree.}\label{fig:stello_bij_composi}
\end{figure}

\begin{figure}[ht!]\centering
                  \includegraphics[width=\textwidth]{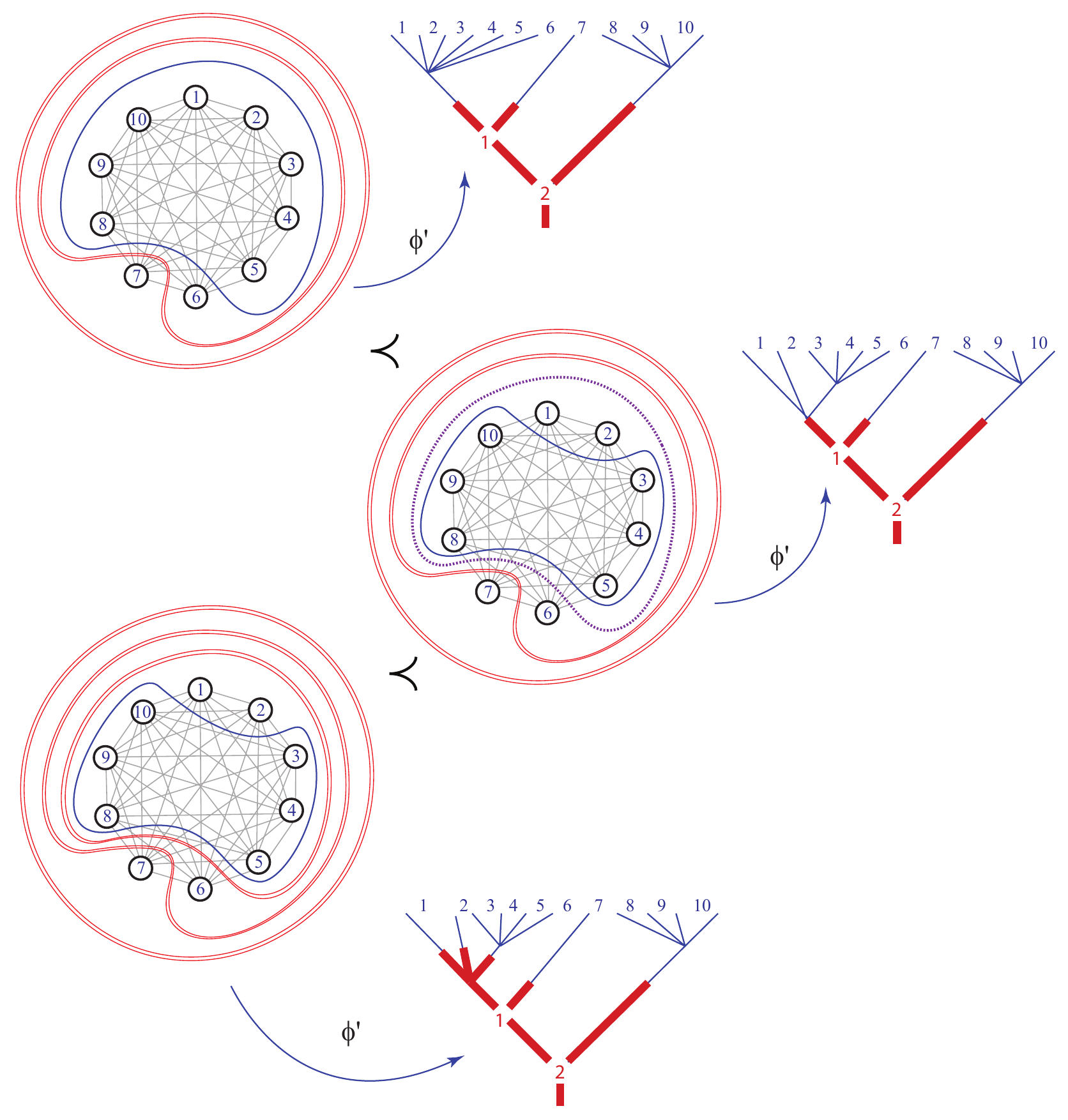}
\caption{The first (upper) example of $\phi'$ has source and target
related to Figure~\ref{fig:stello_bij_composi} by a adding a tube as
in Figure~\ref{f:poset_composi}(d) and growing an unpainted edge
from a half-painted node, and then forgetting structure in both
pictures. The other two relations are from
Figure~\ref{f:poset_composi}(a) and (b) (right to left).
}\label{fig:stello_bij_composi_ord}
\end{figure}

Moreover, we will show that the poset of painted trees made by
grafting a weakly ordered forest to a base corolla is the face poset
of a polytope. It turns out that for painted trees with $n+1$ leaves
this polytope is again the graph-associahedron $\KG$ where $G$ is
the star graph $St_{n}.$

 First, however, we show a bijection from the range-quotients of the
  complete graph multiplihedron (the complete graph-cubeahedron) to
the weakly ordered forests grafted to corollas.
\begin{theorem}\label{stella2}
The poset of $n+1$-leaved weakly ordered forests grafted to corollas
is combinatorially equivalent to the graph-cubeahedron for a
complete graph $\K_n.$
\end{theorem}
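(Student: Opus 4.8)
The plan is to construct an explicit bijection $\phi''$ from the range-equivalence classes of marked tubings on the complete graph $\K_n$ (represented, as in the quotient $\JGr$, by those marked tubings with no thick tube nested inside another thick tube) to the $(n+1)$-leaved weakly ordered forests grafted to corollas, and then to verify that $\phi''$ carries the covering relations of $\JGr$ to those of the painted growth preorder. This mirrors the arguments for Theorem~\ref{stella1} and Theorem~\ref{lift}, with the roles of thick and thin tubes---equivalently, the roles of corolla and weakly ordered structure---exchanged between the base tree and the grafted forest. The essential simplification, exactly as in the proof of Theorem~\ref{lift}, is that on the complete graph no two tubes are far apart, so every tubing is a totally nested chain; together with the mark-compatibility rule (a tube inside a non-thick tube must be thin) and the range condition, a representative is then a chain whose innermost tubes are thin, carrying at most one broken and at most one non-universal thick tube lying outside them.

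To define $\phi''$ I identify the nodes $1,\dots,n$ of $\K_n$ with the gaps $1,\dots,n$ between the leaves of the output tree. The nested chain of thin tubes is decoded, via the bijection used in Theorem~\ref{stella1} between tubings of a complete graph and weakly ordered trees, into the single weak order carried by all the nodes of the grafted forest: the gaps inside the outermost thin tube become the unpainted nodes, and their weak order is read off the chain. The gaps lying outside every thin tube but inside the outermost thick (or, in its absence, the universal) tube become the tied painted gaps of the single corolla node at the base; the subset of these boundary gaps is exactly what records how the leaves split into the components of the forest. A broken tube, when present, marks the paint line and records a half-painted node, precisely as in Theorem~\ref{lift}. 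The inverse reverses these three readings, so $\phi''$ is a bijection, and this is the range-quotient counterpart of the design-tubing description of $\C\K_n$ from \cite{devcube}.

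It remains to check that $\phi''$ is an isomorphism of posets. A covering relation in $\JGr$ is the addition of a single tube, read in the range quotient of the moves of Figure~\ref{f:poset}: the thick-in-thick move~(e) becomes reflexive and is discarded, while the remaining moves may appear up to equivalence. I would match these moves to the covering relations of the painted growth preorder case by case, as was done for $\phi'$: adding a thin tube inside a thin or broken tube (moves (c),(d)) refines the weak order of the forest and corresponds to growing an unpainted edge; resolving a broken tube to thin or thick (moves (a),(b)) and adding broken tubes (moves (f),(g)) correspond to growing unpainted or painted edges at a half-painted node, each followed where necessary by the forgetful map $\frac{1}{\kappa}$ that collapses the painted part back to a corolla. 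Since each elementary move corresponds to exactly one such elementary refinement and conversely, both $\phi''$ and its inverse are order preserving, so the face poset of $\C\K_n=\JGr$ is the painted growth preorder on $(n+1)$-leaved weakly ordered forests over corollas.

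The step I expect to be the main obstacle is the bookkeeping around the single base corolla. In contrast to Theorem~\ref{stella1}, where the painted base is a full weakly ordered tree, here the entire painted region collapses to one corolla node, so several painted gaps must become simultaneously tied and the map $\frac{1}{\kappa}$ must be applied after any edge growth that touches the paint line. The delicate point is to verify that adding or resolving a tube commutes with this collapse---that passing to the range quotient by the thick-in-thick relation and applying $\kappa$ to the base neither identifies nor separates painted trees incorrectly---and I would confirm this by checking the covering relations one at a time against the range-quotient version of Figure~\ref{f:poset}, exactly as in the verification carried out for $\phi'$ in the proof of Theorem~\ref{lift}.
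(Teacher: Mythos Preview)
Your proposal is correct and follows essentially the same approach as the paper: the paper's proof explicitly says it ``follows the pattern of the previous one'' (i.e., Theorem~\ref{lift}) and gives the identical three-zone decomposition---thin chain for the weakly ordered forest, the broken tube $t_0$ for the half-painted nodes, and nodes outside the broken tube for the painted corolla---leaving the covering-relation check to the reader just as you outline. One small imprecision: in the range-quotient representative on $\K_n$ there is in fact never a non-universal thick tube (if the universal tube is not thick, everything inside it must be thin; if it is thick, no other thick tube is allowed), so your phrase ``at most one non-universal thick tube'' should simply read ``the universal tube, when thick.''
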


\begin{proof}
This proof follows the pattern of the previous one, so we leave most
of it to the reader. Note that any nodes outside of the broken $t_0$
tube in the input correspond to the painted corolla base tree of the
ouput. The tubing inside a largest thin tube (which contains a
clique) in the input corresponds to the gaps (between leaves) that
end in nodes of the unpainted weakly ordered forest of the output.
Nodes that are inside a broken tube but not inside any thin tube
determine the gaps that coincide with the paint line. An example of
the bijection is seen in Figure~\ref{fig:stello_bij_cuba}.

The fact that this bijection preserves the ordering is seen just as
in the proof of Theorem~\ref{lift}. 3-dimensional examples are seen in
Figure~\ref{fig: biject4}. See Figure~\ref{chains} for some
isomorphic chains. \end{proof}

\begin{figure}[ht!]\centering
                  \includegraphics[width=4.75in]{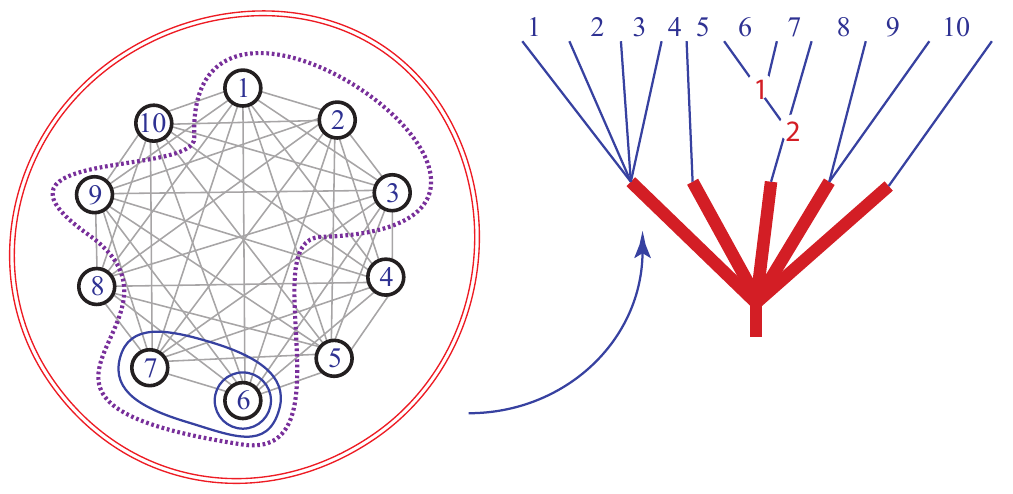}
\caption{A marked tubing on the complete graph, representing an
element of the complete-graph-cubahedron (no structure is shown
outside the broken tube) and its bijective image in the weakly
ordered forests over corollas.}\label{fig:stello_bij_cuba}
\end{figure}

\begin{figure}[h]\centering
                  \includegraphics[width=\textwidth]{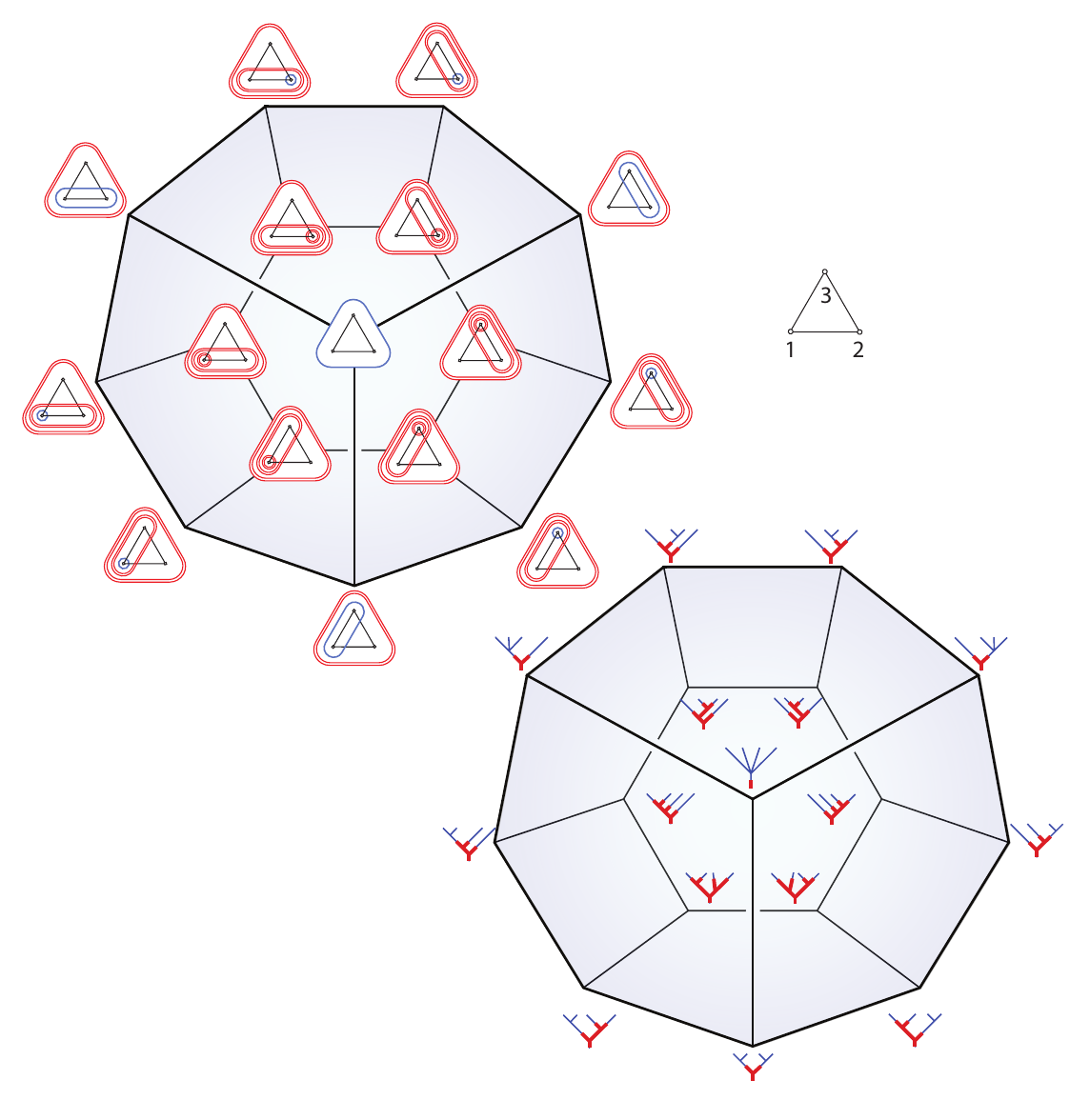}
\caption{Another stellahedra bijection via Theorem~\ref{lift}: when
$K_n$ is the complete graph then $\J K_{n_d}$ (the complete graph
composihedron) is the stellohedron.}\label{fig: biject2}
\end{figure}


\begin{figure}[h]\centering
                  \includegraphics[width=\textwidth]{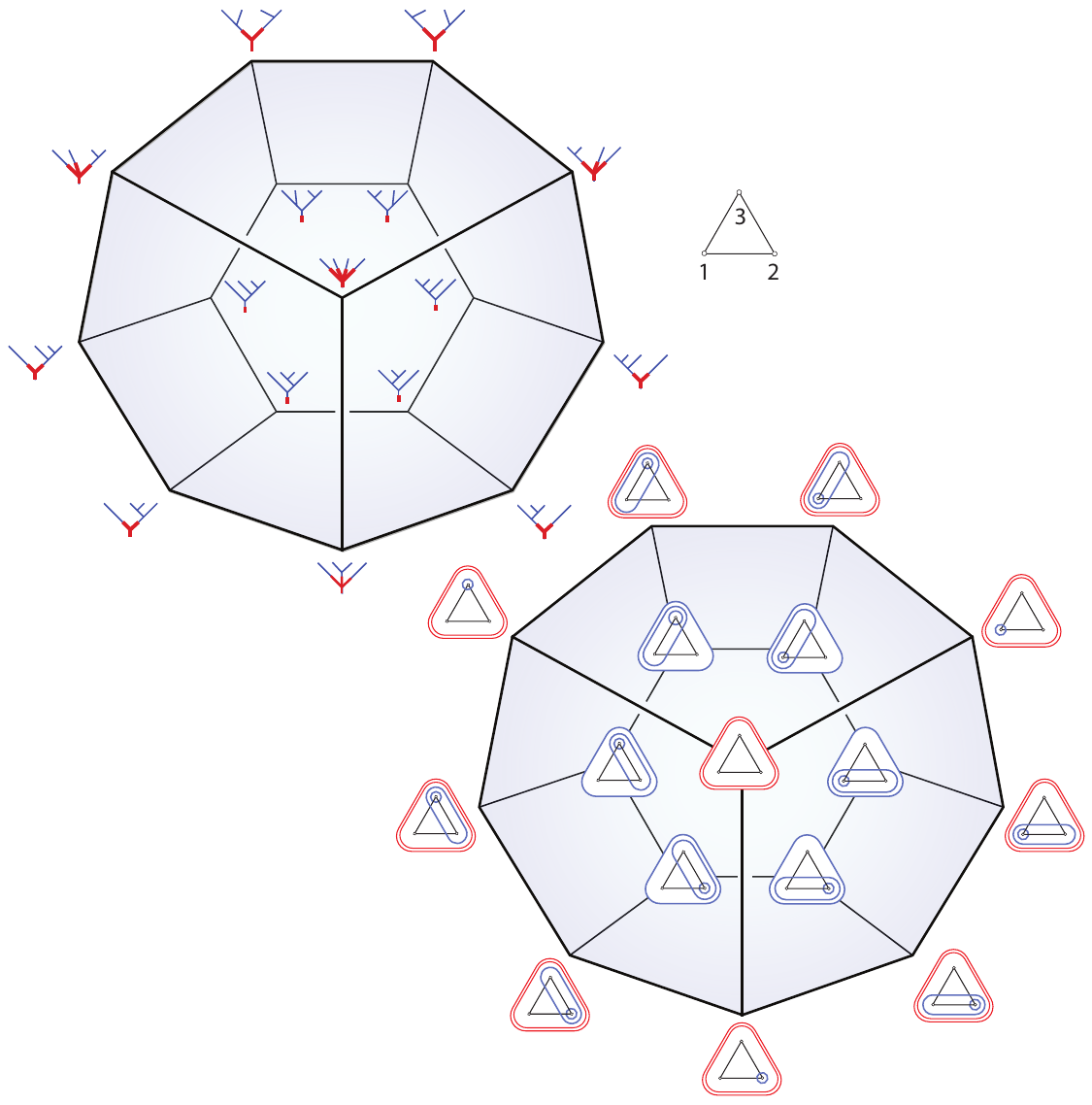}
\caption{Another stellahedra bijection via Theorem~\ref{stella3}:
the composition of ordered forests with corollas, seen in bijection
with the complete graph cubeahedron.}\label{fig: biject4}
\end{figure}

We now can finish with the following:

\begin{theorem}\label{stella3} The weakly ordered forests grafted to corollas are
isomorphic to the stellohedra. \end{theorem}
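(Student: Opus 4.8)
The plan is to reduce the statement to the two isomorphisms already established, leaving only a symmetry of the complete graph to verify. By Theorem~\ref{stella2} the poset of weakly ordered forests grafted to corollas is isomorphic to the cubeahedron $\JGr$ of the complete graph $G=\K_n$, and by Theorem~\ref{lift} the composihedron $\JGd$ of the same complete graph is a stellohedron. Hence it is enough to exhibit a poset isomorphism $\JGd\cong\JGr$ for $G=\K_n$; doing so also re-derives, by way of painted trees, the result of \cite{pilaud2} that the stellohedron is the complete-graph cubeahedron, and there is no circularity since Theorem~\ref{lift} does not use that result.

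First I would recall, as in the proof of Theorem~\ref{stella2}, that every tubing of $\K_n$ is completely nested: in a complete graph no two nonempty tubes are far apart, so the tubes of any tubing form a chain, and every nonempty node subset is a tube. The compatibility rule (a tube lying inside a non-thick tube must be thin) then forces the markings along such a chain to occur, from the universal tube inward, as a block of thick tubes, followed by at most one broken tube, followed by a block of thin tubes. The map I would use is the involution $\Phi\colon U\mapsto U^{*}$ that replaces each non-universal tube $t$ by its complement $[n]\setminus t$ and swaps its marking thin\,$\leftrightarrow$\,thick, fixing broken tubes and the universal tube. This is nothing but the central (complementation) symmetry of the permutohedron---the complete-graph associahedron---decorated with the thin--thick swap.

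Complementation reverses the nesting while the marking swap restores the layered form, so $U^{*}$ is again a valid marked tubing; and because $\Phi$ interchanges the thin and the thick tubes it carries a tubing with no thin tube inside a thin tube to one with no thick tube inside a thick tube, i.e. it sends composihedron representatives to cubeahedron representatives and back. For order preservation, note that $\Phi$ preserves the number of tubes, so it suffices to send each generating move of Definition~\ref{d:poset} to an allowed one: resolving a broken tube to thin becomes resolving it to thick, interchanging the moves of Figure~\ref{f:poset}(a) and~(b); and an insertion of a tube, which complementation turns into an insertion ``from the outside,'' is re-expressed as an allowed inside-insertion of the complemented tube carrying the swapped marking, interchanging ``add thin inside thin'' (Figure~\ref{f:poset}(c)) with ``add thick inside thick'' (Figure~\ref{f:poset}(e)). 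As these are precisely the moves quotiented out in the composihedron and in the cubeahedron, $\Phi$ descends to the quotients and yields the desired isomorphism.

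The step I expect to be the main obstacle is this final order-preservation check. Since complementation exchanges ``inside'' and ``outside,'' each insertion move must be rephrased carefully in terms of the tube it becomes nested within after complementation, and one must track the marking of the universal tube, which is not complemented; getting these conventions right is what makes the interchange of the two distinguished moves of Figure~\ref{f:poset}(c) and~\ref{f:poset}(e) exact in every edge case, and hence makes $\Phi$ a genuine poset isomorphism. Granting this, composing $\JGr\cong\JGd\cong$ stellohedron with Theorem~\ref{stella2} gives that the weakly ordered forests grafted to corollas are isomorphic to the stellohedra.
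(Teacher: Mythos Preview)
Your overall strategy---deduce $\JGr(K_n)\cong\JGd(K_n)$ directly and then compose with Theorem~\ref{stella2} and Theorem~\ref{lift}---is a legitimate alternative to the paper's route, which simply cites Theorem~62 of \cite{pilaud2} (the complete-graph cubeahedron is the stellohedron) and composes with Theorem~\ref{stella2}.  So the two arguments differ at exactly one point: the paper imports the cubeahedron--stellohedron equivalence from \cite{pilaud2}, whereas you try to manufacture it internally via Theorem~\ref{lift} plus an involution $\JGd\cong\JGr$.

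However, the involution $\Phi$ you write down does not do the job.  Take $K_2$ and the $\JGd$-representative $\{\,\{1,2\}^{\text{thick}},\ \{1\}^{\text{thin}}\,\}$.  Your rule (complement each non-universal tube, swap thin$\leftrightarrow$thick, leave the universal tube untouched) sends this to $\{\,\{1,2\}^{\text{thick}},\ \{2\}^{\text{thick}}\,\}$, which has a thick tube inside a thick tube and hence is not a $\JGr$-representative; passing to the $\JGr$-quotient collapses it with the image of $\{\,\{1,2\}^{\text{thick}}\,\}$, so $\Phi$ is not injective on faces.  The same failure occurs for any chain whose universal tube is thick and whose innermost tube is thin.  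The underlying reason is that complementation reverses the chain of \emph{non-universal} tubes but leaves the universal tube sitting outside, so simply swapping the markings tube-by-tube misaligns the thick/broken/thin pattern by one step.  A map that works must also shift the markings: the universal tube should receive the (swapped) marking of the original innermost tube, and in general the $i$th tube from the outside after complementation should carry the swapped marking of the $i$th tube from the inside before.  You correctly flag the universal tube as the delicate point, but the particular $\Phi$ you specify is the wrong fix; as stated, the proof has a genuine gap at this step.
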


\begin{proof}
  By Theorem 62 of \cite{pilaud2}, the graph-cubeahedron for a
complete graph $\K_n$ is combinatorially equivalent to the
stellohedron for the star-graph $St_{n+1}.$ Here is a brief
description of the poset isomorphism described in that paper: if the
star graph $St_{n+1}$ has node 0 as its center, and the nodes of the
complete graph are $1,\dots,n$, then a square tube on the complete
graph is mapped to itself, as a round tube; and round tubes on the
complete graph are mapped to their complement plus the node 0 on the
star graph.  We demonstrate this isomorphism in Figure~\ref{chains}.
Thus the theorem is shown, by composition with the isomorphism in
our Theorem~\ref{stella2}.
\end{proof}

\begin{figure}[h]\centering
                  \includegraphics[width=\textwidth]{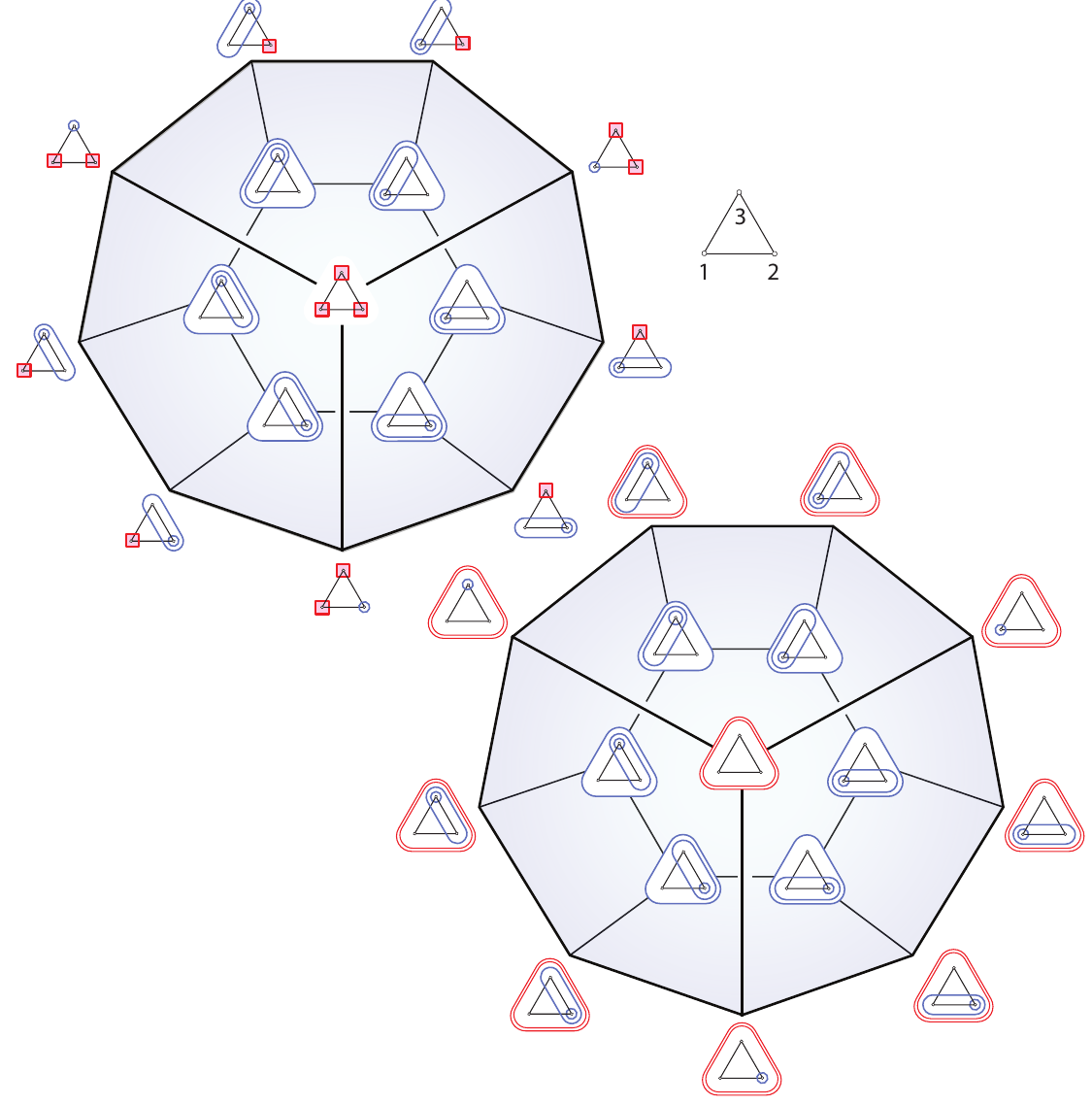}
\caption{Another stellahedra bijection: the complete graph
cubeahedron indexed by design tubings.}\label{fig: biject5}
\end{figure}

\begin{figure}[h]\centering
                  \includegraphics[width=5in]{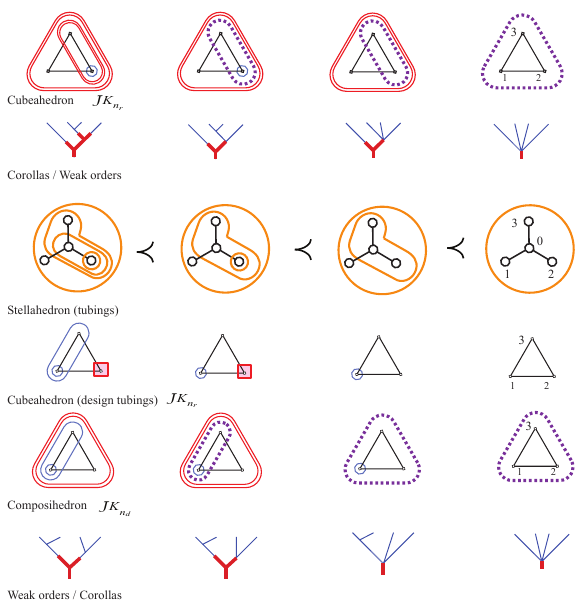}
\caption{Here we bring together six isomorphic flags from the 3-dimensional
stellohedra shown above.}\label{chains}
\end{figure}

\subsection{Pterahedra}
\medskip

The aim of this subsection is to prove that the poset of painted
trees made by grafting a forest of plane rooted trees to a weakly
ordered base tree is the face poset of a polytope. It turns out that
for painted trees with $n+1$ leaves this polytope is the
graph-associahedron $\K F_{1,n}$ where the fan graph $F_{1,n}$ is
the suspension of the path graph $P_n$.

More precisely, the fan graph $F_{1,n}$ is defined as follows: the
set of nodes of $F_{1,n}$ is $[n]_0$, while an edge of the graph is
given either by the pair $\{i,i+1\}$, for some $i= 1,\dots,n-1$, or
by a pair $\{0,i\}$, for some $i=1,\dots,n$.

\begin{theorem}\label{ptera}
The poset of tubings on the fan graph $F_{1,n}$ is isomorphic to the
poset of $n$-leaved forests of plane trees grafted to weakly ordered
trees.
\end{theorem}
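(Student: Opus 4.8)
The plan is to follow the same template that succeeded for Theorem~\ref{stella1}, exhibiting an explicit bijection $\psi$ between tubings on the fan graph $F_{1,n}$ and forests of plane trees grafted to weakly ordered trees, and then checking that $\psi$ respects covering relations in both directions. The central structural observation, paralleling the star-graph case, is that any tubing $T$ of $F_{1,n}$ contains a unique smallest tube $t_0$ containing the apex node $0$ (since $0$ is adjacent to every other node, any tube containing $0$ is automatically connected, and the tubes containing $0$ form a nested chain). As before, every tube of $T$ is either contained in $t_0$ or contains $t_0$. The key difference from the star graph is that the induced subgraph on the leaves $\{1,\dots,n\}$ is now the \emph{path} $P_n$ rather than the edgeless graph $C_n$. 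Consequently the tubes sitting inside $t_0$ form a tubing of (an induced subgraph of) the path, which corresponds not to a mere subset of gaps but to a \emph{forest of plane rooted trees}, via the classical identification of path-graph tubings with the faces of the associahedron.

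First I would set up the three-part construction of $\psi(T)$ exactly analogously to the proof of Theorem~\ref{stella1}: the nodes $1,\dots,n$ of $F_{1,n}$ correspond to the gaps $1,\dots,n$ between the leaves of the output tree. The tubing of the induced subgraph \emph{inside} $t_0$ (a tubing of a sub-path of $P_n$) maps, via the associahedron-to-plane-tree bijection recalled earlier in the excerpt, to the unpainted forest of plane rooted trees sitting above the paint line; the nodes inside $t_0$ but in no smaller tube determine the gaps coinciding with the paint line (the half-painted nodes); and the tubing \emph{outside} $t_0$, living on the reconnected complement of $t_0$, which is a complete graph, maps via the complete-graph-to-weakly-ordered-tree bijection $\beta$ to the painted base tree. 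The inverse $\psi^{-1}$ is the straightforward reversal. I would supply a worked example figure mirroring Figure~\ref{fig:stello_bij_big}.

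Then I would verify that $\psi$ is a poset isomorphism by matching covering relations, i.e.\ single tube additions, with the two growth-and-forget moves defining the painted growth preorder. There are now three (really four) cases to track, generalizing the star-graph argument: adding a tube strictly inside $t_0$ (which refines the path tubing) corresponds to growing unpainted internal edges in the plane-tree forest, exactly as in the associahedron; adding a tube that envelops all current inner tubes, creating a new $t_0$, corresponds to growing painted edges from half-painted nodes; and adding a tube outside $t_0$ corresponds to growing a painted edge at a painted node, governed by the complete-graph $=$ permutohedron covering relations through $\beta$. One must also confirm that the forgetful map invoked in move (b) is the correct one, namely $\frac{\tau}{1}$ applied so that the result lands back among forests of plane trees over weakly ordered trees.

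The main obstacle, and the one genuinely new ingredient relative to Theorem~\ref{stella1}, is the inner layer: replacing the edgeless graph $C_n$ by the path $P_n$ means the "inside $t_0$" data is no longer a flat subset of gaps but a genuinely nested tubing, so I must be careful that the associahedron bijection for $P_n$ is compatible with the half-painted interface, namely that tubes inside $t_0$ only ever correspond to \emph{internal} (unpainted) branching among the gaps selected by $t_0$, never across the paint line, and that the adjacency structure of the path is exactly what records the plane-tree nesting. Verifying this compatibility—that far-apart versus nested relations among inner path tubes translate correctly into sibling versus ancestor relations of plane subtrees, with the half-painted nodes serving as the shared roots—is where the bulk of the care is required; the outer (complete-graph) layer and the interface conventions then carry over essentially verbatim from the stellohedron proof.
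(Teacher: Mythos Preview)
Your proposal is correct and follows essentially the same approach as the paper's own proof: isolate the unique smallest tube $t_0$ containing the apex $0$, use the path-graph/associahedron bijection on the inside and the complete-graph/permutohedron bijection on the reconnected complement outside, and then match tube additions to edge growths case by case. The paper separates your ``strictly inside $t_0$'' case into two subcases (a tube just inside $t_0$ not containing $0$, which grows an unpainted edge from a half-painted node, versus a tube further inside, which grows an edge in the unpainted forest), which is presumably the ``really four'' you allude to; otherwise the arguments coincide.
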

\begin{proof}

Recall that any tubing $T$ of the fan graph includes a unique
smallest tube $t_0$ which contains the node $0$.  As the node $0$ is
adjacent to all other nodes, the other tubes of $T$ are either
contained in $t_0$ or contain $t_0$. The tubes contained in $t_0$
form a tubing of a graph which is a (possibly) disconnected set of
line graphs. The tubes containing $t_0$ form a tubing on the
reconnected
 complement of $t_0$, which is the complete
graph on the nodes which do not belong to $t_0$.

There exists a canonical bijection between the poset of weakly
ordered trees with $n+1$ leaves and the poset ${\mbox {Tub}(K_n)}$
of tubings on the complete graph: pictured in Figures~\ref{perm1}
and~\ref{perm2}. The restriction of this map to the set of plane
trees, gives a bijection between this poset and the poset ${\mbox
{Tub}(P_n)}$ of tubings on the path graph.

Thus the bijection from the poset ${\mbox {Tub}(F_{1,n})}$ of
tubings on the fan graph to our set of painted trees is obtained
from the bijection between ${\mbox {Tub}(K_m)}$ and the set of
weakly ordered trees, together with the bijections between the set
${\mbox {Tub}(P_m)}$ of tubings on the path graph  and the set of
plane rooted trees with $m+1$ leaves, for $m\geq 1$. The tube $t_0$
plays the same role as the paint line in the corresponding tree. The
nodes $1,\dots,n$ of the fan graph $F_{1,n}$ correspond to the gaps
(between leaves) $1,\dots,n$ of the plane rooted tree. For any
tubing $T\in {\mbox {Tub}(F_{1,n})}$ tubing outside of $t_0$ maps to
the painted weakly ordered tree, the tubings inside $t_0$ map to the
unpainted trees, and nodes that are inside $t_0$ but not inside any
smaller tube determine the gaps that coincide with the paint line.
Examples are seen in Figure~\ref{ptera_bij_big}.

\begin{figure}[hbt!]
                  \centering
                  \includegraphics[width=\textwidth]{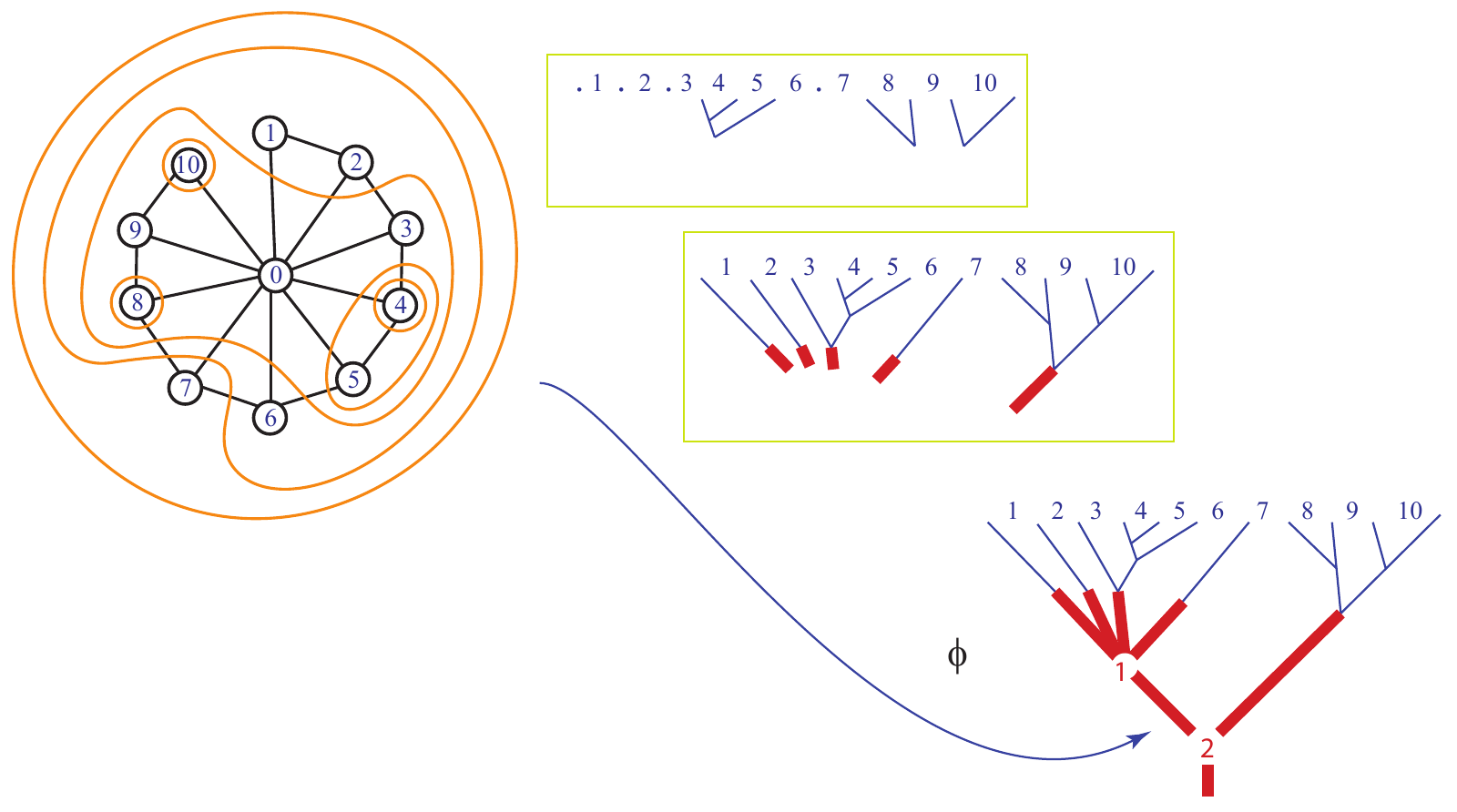}
     \caption{Example of the bijection in Theorem~\ref{ptera}. The steps are shown left to right.}\label{ptera_bij_big}
  \end{figure}

 The fact that this bijection preserves the ordering
follows easily from the definitions. Just note that adding a tube to
a tubing of the fan graph corresponds to growing an internal edge in
the tree. Adding a tube far outside of $t_0$ corresponds to growing
an edge in the painted base. Adding a tube containing node 0 just
inside $t_0$ (so that it becomes the new $t_0$) corresponds to
growing painted edge(s) from a half-painted node. Adding a tube just
inside $t_0$ that does not contain node 0 corresponds to growing
unpainted edge(s) from a half-painted node. Adding a tube further
inside of $t_0$ (that does not contain node 0) corresponds to
growing an edge in the unpainted forest.
\end{proof}
The isomorphism in 3 dimensions is shown pictorially in
Figure~\ref{plethy_flip}.

\begin{figure}[h]\centering
                  \includegraphics[width=\textwidth]{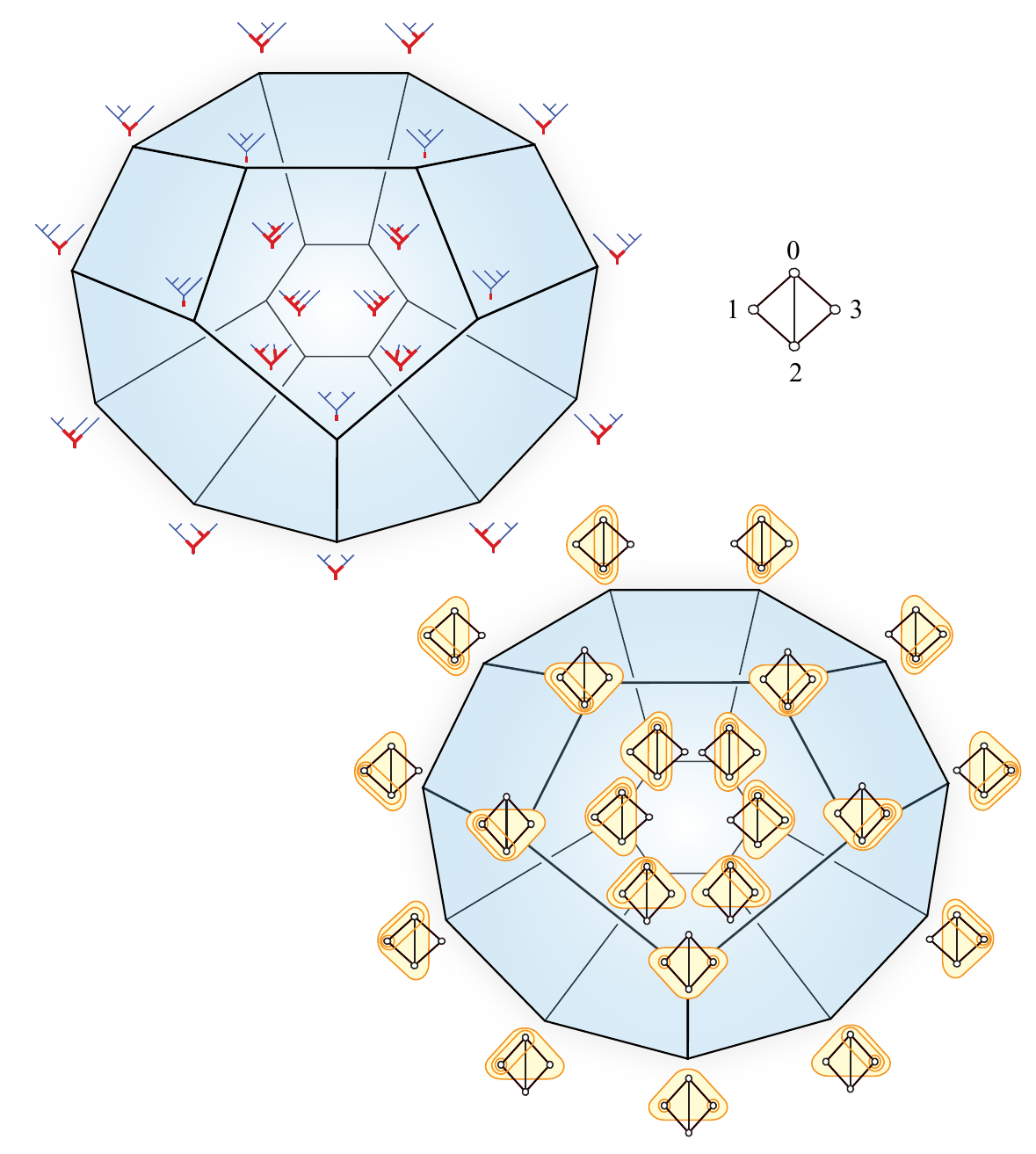}
\caption{Two pictures of the pterahedron, via the bijection from
Theorem~\ref{ptera}.}\label{plethy_flip}
\end{figure}

\subsection{Enumeration}
As well as uncovering the equivalence between the pterahedra and the
fan-graph associahedra, we found some new counting formulas for the
vertices and facets of the pterahedra.

First the vertices of the pterahedra, which are forests of binary
trees grafted to an ordered tree. If there are $k$ nodes in the
ordered, painted portion of a tree, then there are:
\begin{itemize}
\item $k!$ ways to make the ordered portion of this tree with $k$ nodes,
\item $k+1$ leaves of the ordered portion of this tree, and
\item $n-k$ remaining nodes to be distributed among the $k+1$ binary trees that will go on the leveled/painted leaves.
\end{itemize}

\noindent Thus the number of vertices of the pterahedron, labeled by
trees with $n$ nodes, is:

\begin{large}
\begin{center}
$v(n) = \sum\limits_{k=0}^n [~k! \sum\limits_{\substack{\gamma_0 +
\dots + \gamma_k\\= n-k}} ~(\prod\limits_{i=0}^{k}C_{\gamma_i})~]$.
\end{center}
\end{large}
where $C_{j}$ is the $j$th Catalan number. As an example, the number
of trees with $n=4$ is:
\begin{align*}
& 0!~[C_4]\\
&+ 1!~[C_3C_0 + C_2C_1 + C_1C_2 + C_0C_3]\\
&+ 2!~[C_2C_0C_0 + C_1C_1C_0 + C_1C_0C_1 + C_0C_2C_0 + C_0C_1C_1 + C_0C_0C_2]\\
&+ 3!~[C_1C_0C_0C_0 + C_0C_1C_0C_0 + C_0C_0C_1C_0 + C_0C_0C_0C_1]\\
&+ 4!~[C_0C_0C_0C_0C_0]\\
&= 1(14) + 1(14) + 2(9) + 6(4) + 24(1)\\
&= 14 + 14 + 18 + 24 + 24\\
&= 94
\end{align*}

We have computed the cardinalities for $n = 0$ to 9 and they are
shown in Table~\ref{ToY_set_sizes}.

\begin{table}[h]
\caption{The number $v(n)$ vertices of the pterahedra, $n=0$ to
9}\label{ToY_set_sizes}
\begin{center}
\begin{tabular}{|c|c||c|c|}
\hline
~~$n$~~ & ~~$v(n)$~~ & ~~$n$~~ & ~~$v(n)$~~\\
\hline \hline
0 & 1 & 5 & 464 \\
\hline
1 & 2 & 6 & 2652 \\
\hline
2 & 6 & 7 & 17,562 \\
\hline
3 & 22 & 8 & 133,934 \\
\hline
4 & 94 & 9 & 1,162,504 \\
\hline
\end{tabular}
\end{center}
\end{table}
There does not seem to be an entry for this sequence in the OEIS.
\FloatBarrier

Examination of the computations of $v(n)$ leads to an interesting
discovery. If we strip off the factorial factors in $v(n)$
 and build a triangle of just the sums of $C_{\gamma_i}$ products, it appears we are building the Catalan triangle. \\

\begin{center}
\setlength{\tabcolsep}{10pt}
\begin{tabular}{c c c c c c c c c c}
1 \\
1 & 1 \\
2 & 2 & 1 \\
5 & 5 & 3 & 1 \\
14 & 14 & 9 & 4 & 1 \\
42 & 42 & 28 & 14 & 5 & 1 \\
132 & 132 & 90 & 48 & 20 & 6 & 1 \\
429 & 429 & 297 & 165 & 75 & 27 & 7 & 1 \\
1430 & 1430 & 1001 & 572 & 275 & 110 & 35 & 8 & 1 \\
4862 & 4862 & 3432 & 2002 & 1001 & 429 & 154 & 44 & 9 & 1 \\ \\
\end{tabular}
\end{center}

For example,
\begin{center}
$v(4)
=94=0!(\textbf{14})+1!(\textbf{14})+2!(\textbf{9})+3!(\textbf{4})+4!(\textbf{1})$
\end{center}
leads to the values of the $n=4$ row in the triangle above. In fact,
Zoque \cite{Zoque} states that the entries of the Catalan triangle,
often called {\it ballot numbers}, count ``the number of ordered
forests with $m$ binary trees and with total number of $\ell$
internal vertices'' where $m$ and $\ell$ are indices into the
triangle. These forests describe exactly the sets of binary trees we
are grafting onto the leaf edges of individual leveled trees, which
are counted by the sums of $C_{\gamma_i}$ products. Thus we know
that the ballot numbers are equivalent to the sums of $C_{\gamma_i}$
products and can be used in the calculation of $v(n)$ for all values
of $n$.

The formula for the entries of the Catalan triangle
 leads to a simpler formula for $v(n)$,
namely

\begin{large}
\begin{center}
$v(n) = \sum\limits_{k=0}^n k! \frac{(2n-k)!(k+1)}{(n-k)!(n+1)!}$,
\end{center}
\end{large}

Lastly, by considering the Catalan triangle as a matrix as in
\cite{Barry}, we can say that
 the sequence of cardinalities $v(n)$ for all $n$ is the Catalan transform of the factorials
 $(n-1)!$. This means that the ordinary generating function for
 $v(n)$ is: $$\sum_{k=1}^{\infty}(k-1)!\left(\frac{1-\sqrt{1-4x}}{2}\right)^k.$$

Also, it will be helpful to have a formula for the number of facets
for the fan graphs, $F_{m,n}$.
 Recall $F_{m,n}$ is defined to be the graph join of $\bar{K}_m$ the edgeless graph on $m$ nodes, and $P_n$
  the path graph on $n$ nodes. Thus, $\Fmn$ has $m+n$ vertices, $n$ of which comprise a subgraph isomorphic
   to the path graph on $m$ nodes, the other $n$ vertices connected to each of these $m$. Thus, $\Fmn$ has
   $m-1+mn = m(n+1)-1$ edges.

Now, counting tubes in this case is again a matter of counting
subsets of vertices whose induced subgraph is connected. The
structure of $\Fmn$ makes it useful to let $V_m$ denote those
vertices coming from the edgeless graph of $m$ nodes, and likewise
$V_n$ those from the path graph on $n$ nodes.

 It is clear that some tubes are simply tubes of the path graph $P_n$,  hence
  there are at least $\frac{n(n+1)}{2}-1$ tubes. We must not forget that $V_n$ is itself now a
    tube since it is a proper subset of nodes of $\Fmn$. These tubes include every subset of $V_n$ that is a valid tube of $\Fmn$.

 It is simple to see that the only subsets of $V_m$ that are valid tubes are precisely the singletons,
 since no pair of vertices in $V_m$ are connected by an edge. Thus $\Fmn$ has at least $\frac{n(n+1)}{2}+m$ tubes.

 The remaining possibility for tubes must include at least one node from $V_m$ as well as at
  least one node from $V_n$. This produces all (possibly improper) tubes, since any subset of
   $V=V_m \cup V_n$ satisfying this criterion is connected. It is straightforward to see that
    there are exactly $(2^m - 1)(2^n - 1)$ tubes arising in this fashion. Now, however, we must
    subtract 1 from the above since we have allowed ourselves to count $V$ as a tube, although it is not proper.

 Hence we count $$\frac{n(n+1)}{2} + (2^m - 1)(2^n - 1) + m - 1$$ as the number of tubes of $\Fmn$ and
 the number of facets of the corresponding graph associahedron.
For the pterahedra, where $m=1$, the formula becomes:
$$\frac{n(n+1)}{2} + 2^n - 1.$$ Interestingly, this is the same
number of facets as possessed by the multiplihedron $\J(n)$, as seen
in \cite{multi}, where we enumerate the facets by describing their
associated trees.

The number of vertices of the stellahedron is worked out in several
places, including \cite{post2}, where the formula is given:

\[
   v(n)\ =\ \sum_{k=0}^n k! {n \choose k}\ =\
   \sum_{k=0}^n n!/k!\,,
\]
which is sequence A000522 in the OEIS~\cite{Slo:oeis}. This is the
binomial transform of the factorials.

Now for the facets. We will use the following, possibly well-known
\begin{lemma}
The bipartite graph associahedron $\mathcal{K}K_{m,n}$ has
$2^{m+n}+(m+n)-(2^m+2^n)$ facets.
\end{lemma}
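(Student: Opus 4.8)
The plan is to count facets by counting tubes, exactly as in the preceding fan-graph computation. By Theorem~\ref{t:facet}, the facets of the graph associahedron $\mathcal{K}G$ are in bijection with the single non-universal tubes of $G$; equivalently, with the proper connected induced subgraphs of $G$. So it suffices to enumerate the tubes of $K_{m,n}$ and then discard the universal tube.

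Write $V_m$ and $V_n$ for the two color classes of $K_{m,n}$, of sizes $m$ and $n$; there are no edges within $V_m$ or within $V_n$, while every node of $V_m$ is adjacent to every node of $V_n$. A tube is a nonempty subset $S = S_m \cup S_n$ (with $S_m \subseteq V_m$ and $S_n \subseteq V_n$) whose induced subgraph is connected. I would split the count into cases according to how $S$ meets the two classes. First, if $S_m$ and $S_n$ are both nonempty, then the induced subgraph is the complete bipartite graph on $S_m \cup S_n$, which is connected; hence every such $S$ is a tube, giving $(2^m-1)(2^n-1)$ of them. Second, if $S$ lies entirely in one class, its induced subgraph is edgeless, so it is connected only when $S$ is a singleton; this contributes the $m+n$ singletons. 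There are no further tubes.

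Summing these and then removing the universal tube $V_m \cup V_n$ (the unique case $S_m = V_m$, $S_n = V_n$, already counted once in the first case) yields
\[
(2^m-1)(2^n-1) + (m+n) - 1 = 2^{m+n} + (m+n) - (2^m + 2^n),
\]
after expanding $(2^m-1)(2^n-1) = 2^{m+n} - 2^m - 2^n + 1$, which is the claimed formula.

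Since nothing here is genuinely delicate, the only point requiring care --- exactly as in the $F_{m,n}$ count above --- is twofold: recognizing that a multi-node subset of a single class fails to be connected (so such subsets do not contribute), and remembering to subtract the single universal tube, which would otherwise over-count the facets by one. I do not expect any serious obstacle beyond this bookkeeping.
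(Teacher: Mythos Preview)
Your proof is correct and takes essentially the same approach as the paper: both arguments hinge on the observation that a subset meeting both color classes induces a connected complete bipartite graph, while a subset contained in a single class is connected only when it is a singleton. The only cosmetic difference is that the paper counts all $2^{m+n}-2$ proper nonempty subsets and subtracts the $2^m+2^n-(m+n+2)$ disconnected ones, whereas you count the connected subsets directly by cases and then remove the universal tube; the arithmetic is the same.
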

To see this, we will count subsets of nodes which give valid tubes.
We will over-count and then correct. Let $K_{m,n}=(V_1\cup V_2,E)$
where $|V_1|=m$ and $|V_2|=n$. Note that the only subsets $S$ of
nodes which do not give valid tubes are $S$ such that $S\subseteq
V_1$ (or $V_2$) with $|S|>1$. These are simply edgeless graphs with
$|S|>1$ nodes, and do not constitute valid tubes. For the moment,
let $S\subseteq V_1$ where $|V_1|=m.$

Let $M=\#\{S\subseteq V_1: |S|>1\}$. Now
$$M=\sum_2^k \begin{pmatrix}m\\k\end{pmatrix}=2^m-(m+1)$$
and by the above, there are $2^n-(n+1)$ ``bad subsets'' that can be
chosen from $V_2$ for a total of $2^m+2^n-(m+n-2)$ bad subsets of
$V=V_1 \cup V_2$.

It follows that we may choose any of $2^{m+n}-2$ proper, nonempty
subsets of $V$, and subtract
 off the bad choices for the total number of tubes. Thus, $K_{m,n}$ has

\begin{eqnarray*}2^{m+n}-2-(2^m+2^n-(m+n-2))&=
2^{m+n}-2-(2^m+2^n)+m+n+2\\
&=2^{m+n}+(m+n)-(2^m+2^n)\end{eqnarray*} tubes.

Note that, for $K_{1,n-1}$, we see that star graphs on $n$ nodes
have
\begin{eqnarray*}
2^{n}+n-(2^{n-1}+2)&=2^n-2^{n-1}+n-2\\
&=2^{n-1}(2-1)+n-2\\
&=2^{n-1}+n-2
\end{eqnarray*}
tubes.

\section{Additional shuffle product on the Stellohedra}\label{s:alg}
\subsection{Preliminaries}

For $n\geq 1$, we denote by $\Sigma_n$ the group of permutations of
$n$ elements. For any set $U=\{ u_1,\dots ,u_n\}$ with $n$ elements,
an element $\sigma \in \Sigma_n$ acts naturally on the left on $U$
and induces a total order $u_{\sigma^{-1} (1)} < \dots < u_{\sigma
^{-1}(n)}$ on $U$.

For nonnegative integers $n$ and $m$, let ${\mbox {Sh}(n,m)}$ denote
the set of $(n,m)$-shuffles, that is the set of permutations
$\sigma$  in the symmetric group $\Sigma_{n+m}$ satisfying that:
$$\sigma (1) < \dots <\sigma (n)\qquad {\rm and}\qquad \sigma(n+1)< \dots <\sigma (n+m). $$
For $n=0$, we define ${\mbox {Sh}(0,m)}:= \{ 1_m\} =: {\mbox
{Sh}(m,0)}$, where $1_m$ is the identity of the group $\Sigma_m$.
More in general, for any composition $(n_1,\dots ,n_r)$ of $n$, we
denote by ${\mbox {Sh}(n_1,\dots ,n_r)}$ the subset of all
permutations $\sigma $ in $\Sigma_n$ such that $\sigma (n_1+\dots
+n_i+1)< \dots  < \sigma (n_1+\dots +n_{i+1})$, for $0\leq i\leq
r-1$.

The concatenation of permutations $\times : \Sigma_n\times
\Sigma_m\hookrightarrow \Sigma_{n+m}$ is the associative product
given by:
\[\sigma \times \tau (i):=\left\{\begin{array}{lr}\sigma (i),& \ {\rm for}\ 1\leq i\leq n,\\
\tau(i-n)+n,&\ {\rm for}\ n+1\leq i\leq n+m,\end{array}\right\} \]
for any pair of permutations $\sigma \in \Sigma_n$ and $\tau\in
\Sigma_m$.

The well-known associativity of the shuffle states that:
\begin{equation}\nonumber {\mbox {Sh}(n+m, r)}\cdot ({\mbox {Sh}(n,m)}\times 1_r) =  {\mbox {Sh}(n,m,r)} = {\mbox {Sh}(n,m+r)}\cdot (1_n\times {\mbox {Sh}(m, r)}),\end{equation}
where $\cdot $ denotes the product in the group $\Sigma_{p+q+r}$.
\bigskip

For $n\geq 1$, recall that the star graph $St_n$ is a simple
connected graph with set of nodes ${\mbox {Nod}(St_n)} =\{ 0,1,\dots
,n\}$ and whose edges are given by $\{0,i\}$ for $i=1,\dots, n.$

That is $St_n$ is the suspension of the graph with $n$ nodes and no
edges.
\medskip
\begin{notation} \label{tubingstello} For any maximal tubing $T$ of $St_n$ such that $T\neq \{ \{1\},\dots ,\{n\}\}$, there exists a unique integer $0\leq r\leq n$, a unique family of integers  $1\leq u_1<\dots < u_r\leq n$ and an order $\{u_{r+1},\dots, u_{n}\}$ on the set $\{1,\dots ,n\} \setminus \{ u_1,\dots , u_r\}$ such that
$$T=\{ \{u_1\},\dots ,\{u_r\}, t_0 ,
 t_0 \cup \{ u_{r+1}\}, \dots , t_0\cup \{u_{r+1},\dots ,u_{n-1}\}\},$$
 where $ t_0 :=\{0, u_1,\dots ,u_r\}$.

We denote such tubing $T$ by ${\mbox {Tub}_r(u_1,\dots ,u_{n})}$,
where $u_n$ is the unique vertex which does not belong to any tube
of $T$. We denote the tubing $\{ \{1\},\dots ,\{n\}\}= {\mbox
{Tub}_n(1,\dots ,n)}$ by ${\mbox {Tub}_n}$.\end{notation} \medskip

\bigskip

Recall this example of a tree splitting from Section~\ref{hopf}.

\includegraphics[width=\textwidth]{treesplit_examp.pdf}

Note that a $k$-fold splitting, which is given by a size $k$
multiset of the $n+1$ leaves, corresponds to a $(k,n)$ shuffle. The
corresponding shuffle is described as follows: $\sigma(i)$ for $i\in
1,\dots,k$ is equal to the sum of the numbers of leaves in the
resulting list of trees 1 through $i$. For instance in the above
example the shuffle is $(3,4,8,10,1,2,5,6,7,9,11).$

In the above Section~\ref{hopf} the product of two painted trees is
described as a sum over splits of the first tree, where after each
split the resulting list of trees is grafted to the leaves of the
second tree. Thus this product can be seen as a sum over shuffles.
In fact if we illustrate the products using the graph tubings, then
shuffles are actually more easily made visible than splittings.

Here we mainly want to show some examples of the products, since we
have already proven the structure. For that purpose we show single
terms in the product, each term relative to a shuffle.
Figures~\ref{f:tree_mult_with_stello}
and~\ref{f:tree_mult_with_ptera} show terms in two sample products,
relative to the given shuffle, and illustrating the splitting as
well. In Figures~\ref{f:tube_mult_with_stello}
and~\ref{f:tube_mult_with_ptera} we show the same sample product
terms, pictured using the tubings on the star graphs and fan graphs.

\begin{figure}[htb!]\centering
                  \includegraphics[width=\textwidth]{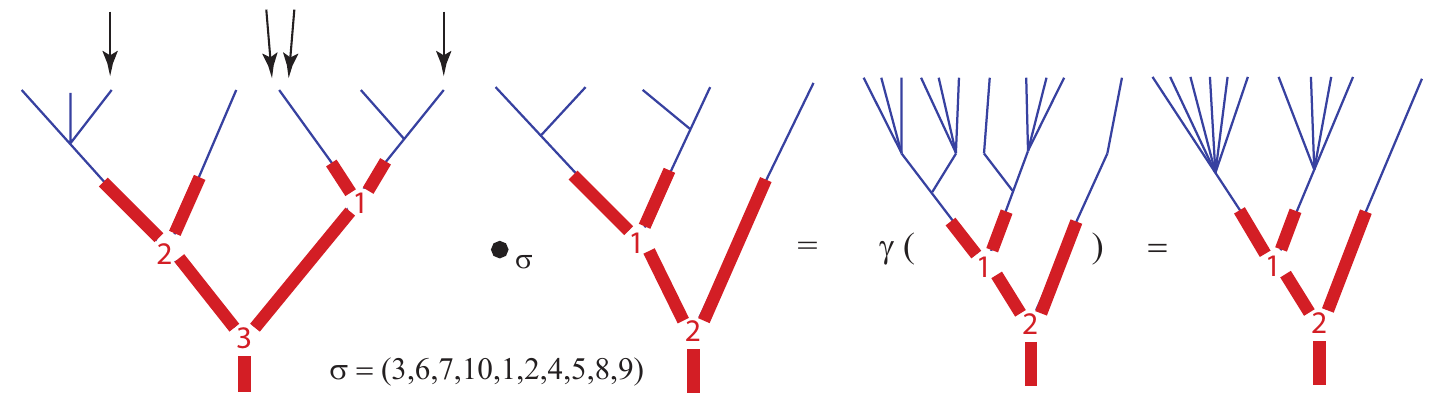}
\caption{One term in the product defined in Section~\ref{hopf},
relative to the given shuffle.}\label{f:tree_mult_with_stello}
\end{figure}

\begin{figure}[htb!]\centering
                  \includegraphics[width=\textwidth]{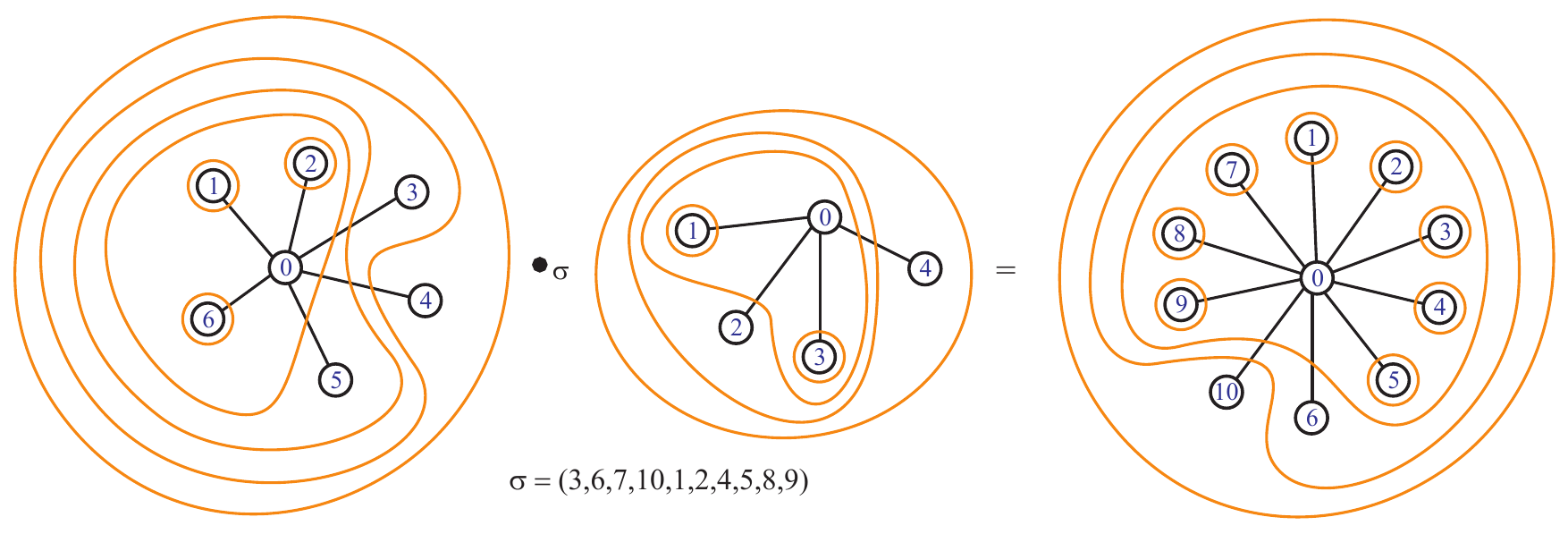}
\caption{The same product as pictured in
Figure~\ref{f:tree_mult_with_stello}, shown here using tubings on
the star graphs.}\label{f:tube_mult_with_stello}
\end{figure}

\begin{figure}[htb!]\centering
                  \includegraphics[width=\textwidth]{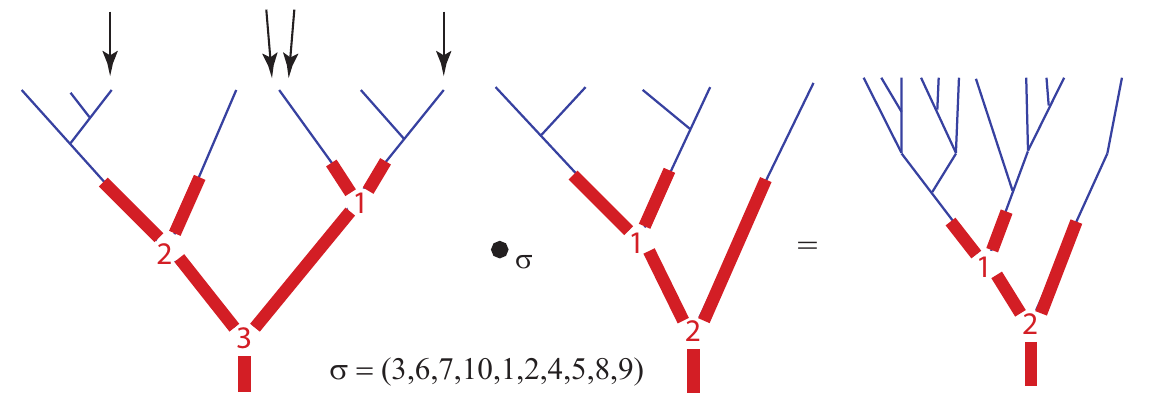}
\caption{One term in the product defined in Section~\ref{hopf},
relative to the given shuffle.}\label{f:tree_mult_with_ptera}
\end{figure}

\begin{figure}[htb!]\centering
                  \includegraphics[width=\textwidth]{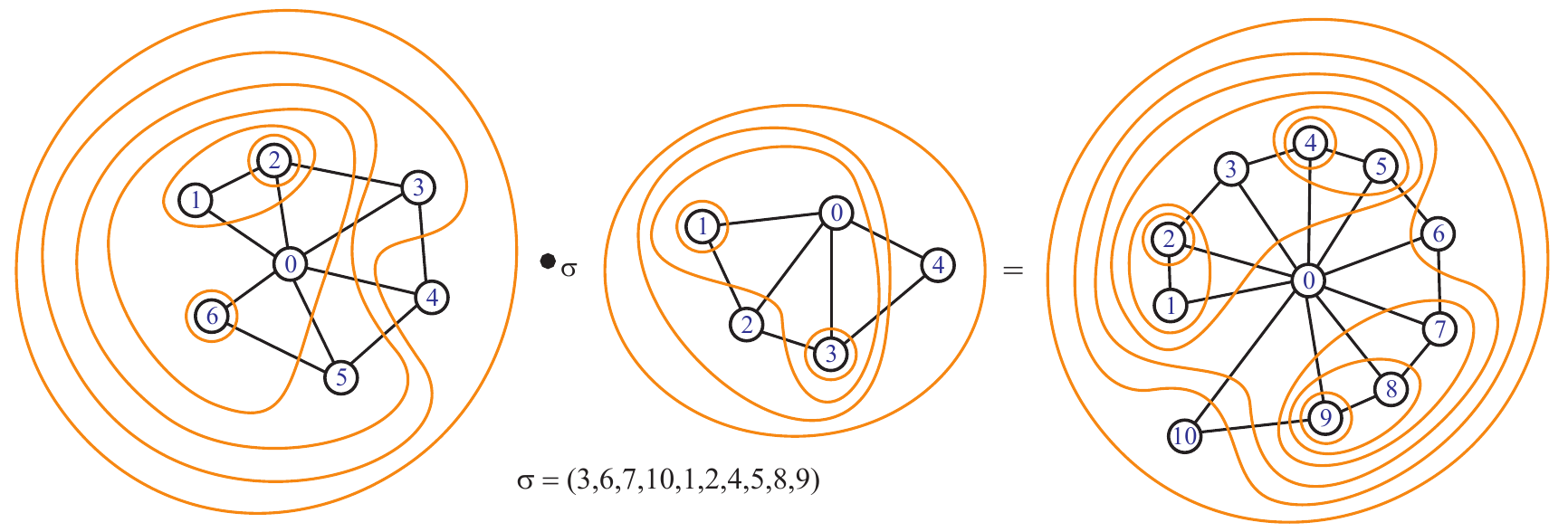}
\caption{The same product as pictured in
Figure~\ref{f:tree_mult_with_ptera}, shown here using tubings on the
star graphs.}\label{f:tube_mult_with_ptera}
\end{figure}

\section{Alternative product on the Stellohedra vertices}

For $n\geq 1$, we denote by $[n]$ the set $\{ 1,\dots ,n\}$. For any
set of natural numbers $U = \{ u_1,\dots ,u_k\}$ and any integer
$r\in {\mathbb Z}$, we denote by $U + r$ the set $\{ u_1+r,\dots ,
u_k+r\}$.
\medskip

Let $G$ be a simple finite graph with set of nodes ${\mbox {Nod}(G)}
= \{ j_1,\dots , j_r\}\subseteq {\mathbb N}$, we denote by $G+n$ the
graph $G$, with the set of nodes colored by ${\mbox {Nod}(G)} + n$,
obtained by replacing the node $j_i$ of $G$ by the node $j_i + n$,
for $1\leq i\leq r$.
\medskip

Let $G$ be a simple finite graph, whose set of nodes is $[n]$, we
identify a tube $t = \{ v_1,\dots , v_k\}$ of $G$ with the tube
$t(h) = \{ v_1+h,\dots ,v_k+h\}$ of $G+h$. For any tubing $T = \{
t_i\}$ of $G$, we denote by $T(h)$ the tubing $\{ t_i(h)\}$ of
$G+h$.
\bigskip
In the present section, we use the shuffle product, which defines an
associative structure on the vector space spanned by all the
vertices of permutohedra, in order to introduce associative products
(of degree ${-}1$) on the vector spaces spanned by the vertices of
stellohedra.
\bigskip

\begin{definition} \label{assstello} Let $T$ be a maximal tubing of $\St_n$ and ${ V}$ be a maximal tubing of $\St_m$ such that $T={\mbox {Tub}_r(u_1,\dots ,u_{n})}$ and $V = {\mbox {Tub}_s(v_1,\dots ,v_{m})}$. For any $(n-r, m-s)$- shuffle $\sigma\in S_{n+m{-}r{-}s}$ define the maximal tubing $T*_{\sigma }{ V}$ of $S_{n+m}$ as follows:
$$T*_{\sigma}{ V} := {\mbox {Tub}_{r+s}(u_1,\dots ,u_r,v_1{ +n},\dots ,v_s{ +n}, w_{\sigma ^{-1}(1)}, \dots ,w_{\sigma^{-1} (n+m{-}(r+s))})},$$
where:
$$w_i:=\left\{\begin{array}{lr}u_{r+i},\ {\rm for}\ 1\leq i\leq n-r,\\
v_{s+i+r{-}n}{ +n},\ {\rm for}\ n-r < i\leq
n+m-r-s.\end{array}\right\}$$ If $T = {\mbox {Tub}_n}$, then $\sigma
= 1_m$ and
$${\mbox {Tub}_n}*_{1_m}{ V} = {\mbox {Tub}_{n+s}(1,\dots ,n,v_1+n,\dots ,v_m+n)}.$$
In a similar way, we have that
$$T*_{1_n}{\mbox {Tub}_m} = {\mbox {Tub}_{r+m}(u_1,\dots ,u_r,n+1,\dots ,n+m,u_{r+1},\dots ,u_n)}.$$
\end{definition}
\medskip

In Figure~\ref{f:maria_mult_with_stello} we illustrate the following
example.
$${\mbox {Tub}_3(1,2,6,5,3,4)*_{\sigma} \mbox{Tub}_2(1,3,2,4)} = \mbox{Tub}_5(1,2,6,7,9,5,8,3,10,4)$$
...where $\sigma = (1,3,5,2,4).$ For comparison see a product with
the same operands in Figure~\ref{f:tube_mult_with_stello}.
\begin{figure}[h]\centering
                  \includegraphics[width=\textwidth]{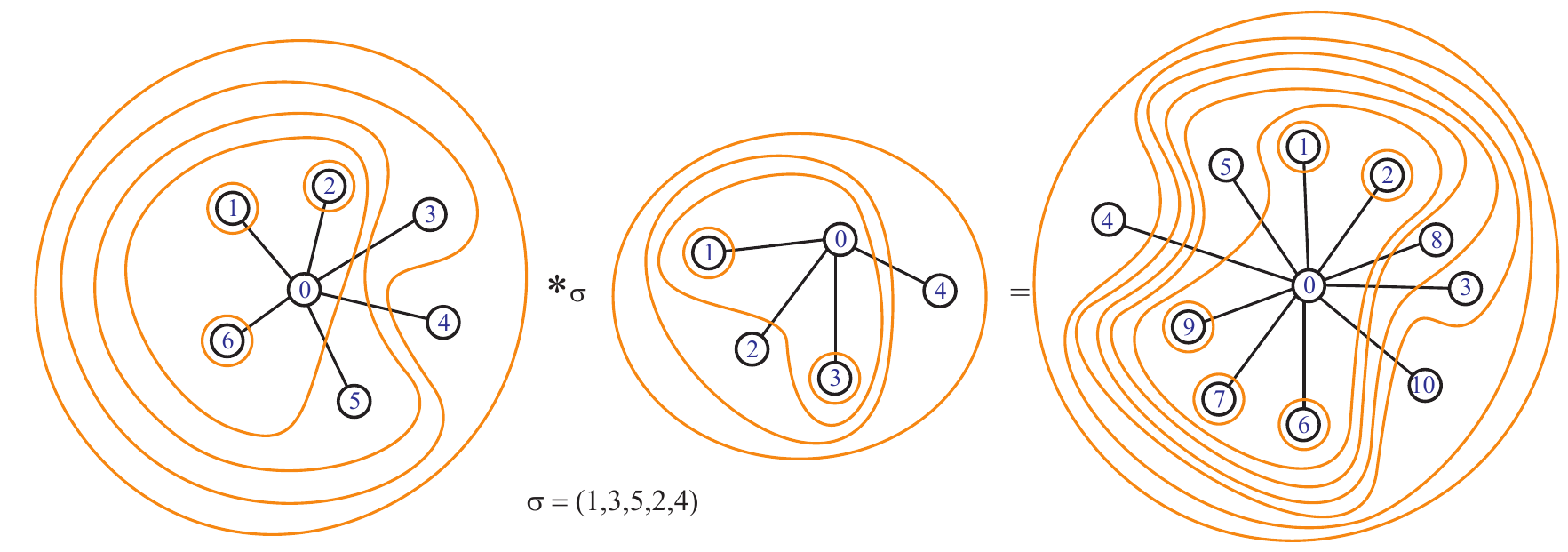}
\caption{Example of Definition~\ref{assstello}.
}\label{f:maria_mult_with_stello}
\end{figure}

Using Definition \ref{assstello},  we define a shuffle product on
the vector space $\KK[{\mathcal {MT}}(\St)]:= \bigoplus _{n\geq
1}\KK[{\mathcal {MT}}(\St_n)]$, where ${\mathcal {MT}}(\St_n)$
denotes the set of maximal tubings on $\St_n$, which correspond to
the vertices of stellohedra, as follows.

\begin{definition} \label{prodstello} Let $T\in {\mathcal {MT}}(\St_n)$ and ${ V}\in {\mathcal {MT}}(\St_m)$ be two maximal tubings. The product $T*{ V}\in {\mathcal {MT}}(\St_{n+m})$ is defined as follows:\begin{enumerate}
\item If $T={\mbox {Tub}_r(u_1,\dots ,u_{n})}\neq {\mbox {Tub}_n}$ and ${ V}= {\mbox {Tub}_s({ v}_1,\dots ,{ v}_{m})}\neq {\mbox {Tub}_m}$, then:
$$T*{ V} := \sum _{\sigma \in {\mbox {Sh}(n-r,m-s)}} T*_{\sigma }{ V},$$
where ${\mbox {Sh}(n-r,m-s)}$ denotes the set of
$(n{-}r,m{-}s)$-shuffles in $S_{n+m{-}(r+s)}$.
\item If $T=  {\mbox {Tub}_n}$ and ${ V} = {\mbox {Tub}_m}$, then:
$$T*{ V} = {\mbox {Tub}_{n+m}}.$$
\item If $T= {\mbox {Tub}_n}$ and ${ V}= {\mbox {Tub}_s({ v}_1,\dots ,{ v}_{m})}\neq {\mbox {Tub}_m}$, then:
$$T*{ V} := {\mbox {Tub}_{n+s}(1,\dots ,n, { v}_1+n,\dots ,{ v}_m+n)}.$$
\item If $T= {\mbox {Tub}_r(u_1,\dots ,u_{n})}\neq {\mbox {Tub}_n}$ and ${ V}= {\mbox {Tub}_m}$, then:
$$T*{ V} := {\mbox {Tub}_{r+m}(u_1,\dots ,u_r, n+1,\dots ,n+m, u_{r+1},\dots , u_{n})}.$$\end{enumerate}
\end{definition}
\medskip

\begin{proposition}\label{assofstell} The graded vector space $\KK[{\mathcal {MT}}(\St)]$, equipped with the product $*$ is an associative algebra.\end{proposition}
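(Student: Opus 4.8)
The plan is to recognize the product $*$ as a disguised shuffle product and to reduce the claim to the classical associativity of shuffles recalled in the preliminaries. The first step is to repackage the combinatorial data of a maximal tubing. By Notation~\ref{tubingstello}, a maximal tubing $T=\mbox{Tub}_r(u_1,\dots,u_n)$ of $\St_n$ is determined by two pieces of data: the set of singletons $S_T=\{u_1,\dots,u_r\}$, and the \emph{ordered tail} $\omega_T=(u_{r+1},\dots,u_n)$, a linear ordering of $[n]\setminus S_T$. Since $\omega_T$ recovers its underlying set, and hence $S_T$ as the complement, the tubing is equivalent to the single datum $\omega_T$, an arrangement (sequence of distinct letters) drawn from $[n]$, with $\mbox{Tub}_n$ corresponding to the empty arrangement. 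Inspecting Definition~\ref{assstello}, the term $T*_{\sigma}V$ has singleton set $S_T\cup(S_V+n)$ and ordered tail obtained by interleaving, via the $(n-r,m-s)$-shuffle $\sigma$, the concatenated word $(u_{r+1},\dots,u_n,\,v_{s+1}+n,\dots,v_m+n)$; that is, by shuffling $\omega_T$ with the shifted tail $\omega_V+n$ on disjoint alphabets, while the singletons are merely concatenated. The degenerate clauses (2)--(4) of Definition~\ref{prodstello} are precisely the instances where one tail is empty.

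Granting this reinterpretation, I would prove associativity by expanding both triple products and matching them term by term. Write $T=\mbox{Tub}_r(u_\bullet)$, $V=\mbox{Tub}_s(v_\bullet)$, $W=\mbox{Tub}_t(z_\bullet)$ on $\St_n,\St_m,\St_p$, and set $a=n-r$, $b=m-s$, $c=p-t$ for the three tail lengths. Expanding $(T*V)*W$ yields a double sum over $\sigma\in\mbox{Sh}(a,b)$ and $\rho\in\mbox{Sh}(a+b,c)$; each summand is the maximal tubing of $\St_{n+m+p}$ whose singleton set is $S_T\cup(S_V+n)\cup(S_W+(n+m))$ and whose ordered tail is the interleaving of the concatenated word $(\omega_T,\ \omega_V+n,\ \omega_W+(n+m))$ prescribed by the composite shuffle $\rho\cdot(\sigma\times 1_c)$. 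Symmetrically, $T*(V*W)$ is the double sum over $\tau\in\mbox{Sh}(b,c)$ and $\pi\in\mbox{Sh}(a,b+c)$, with the \emph{same} singleton set and the same concatenated word, now interleaved by $\pi\cdot(1_a\times\tau)$. The singleton bookkeeping matches because $z_i+m$ shifted again by $n$ equals $z_i+(n+m)$, so the two associations produce identical underlying words.

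Finally I would invoke the shuffle associativity identity stated in the preliminaries,
\[
\mbox{Sh}(a+b,c)\cdot(\mbox{Sh}(a,b)\times 1_c)=\mbox{Sh}(a,b,c)=\mbox{Sh}(a,b+c)\cdot(1_a\times\mbox{Sh}(b,c)),
\]
which says exactly that the composites $\rho\cdot(\sigma\times 1_c)$ and $\pi\cdot(1_a\times\tau)$ each range bijectively over the triple-shuffle set $\mbox{Sh}(a,b,c)$ as the inner and outer shuffles vary. Hence both $(T*V)*W$ and $T*(V*W)$ equal $\sum_{\mu\in\mbox{Sh}(a,b,c)}M_\mu$, where $M_\mu$ is the single maximal tubing with the common singleton set above and ordered tail the $\mu$-interleaving of $(\omega_T,\ \omega_V+n,\ \omega_W+(n+m))$; this establishes associativity.

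I expect the main obstacle to be organizational rather than conceptual: carefully verifying that the index substitution $w_{\sigma^{-1}(\cdot)}$ in Definition~\ref{assstello}, together with the successive shifts $+n$ and $+(n+m)$, really does realize the tail of a twice-iterated product as the composite-shuffle interleaving of the triple concatenation, and confirming that the clauses for $\mbox{Tub}_n$ are the $a=0$, $b=0$, or $c=0$ specializations of the uniform formula (using $\mbox{Sh}(0,k)=\{1_k\}$), so that no degenerate case escapes the shuffle-associativity reduction. Once the encoding $T\mapsto\omega_T$ is in place, the remaining verifications are routine.
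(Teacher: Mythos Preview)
Your proposal is correct and follows essentially the same route as the paper: both reduce associativity of $*$ to the shuffle associativity identity $\mbox{Sh}(a+b,c)\cdot(\mbox{Sh}(a,b)\times 1_c)=\mbox{Sh}(a,b,c)=\mbox{Sh}(a,b+c)\cdot(1_a\times\mbox{Sh}(b,c))$ stated in the preliminaries, by expanding both triple products as sums over triple shuffles of the concatenated, shifted tails. Your repackaging via the ordered tail $\omega_T$ and your explicit treatment of the degenerate clauses (2)--(4) as the zero-tail specializations are more detailed than the paper's terse argument, but the underlying idea is the same.
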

\medskip

\begin{proof}
Suppose that the elements $T={\mbox {Tub}_r(u_1,\dots ,u_{n})}$ in
${\mathcal {MT}}(\St_{n})$, $V= {\mbox {Tub}_s(v_1,\dots ,v_{m})}$
belongs to ${\mathcal {MT}}(\St_{m})$ and $W= {\mbox
{Tub}_z(w_1,\dots ,w_{p})}$ belongs to ${\mathcal {MT}}(\St_{p})$
are three maximal tubings, where eventually $T={\mbox {Tub}_n}$, $V
= {\mbox {Tub}_m}$ or $W = {\mbox {Tub}_p}$.

Applying the associativity of the shuffle, we get that:
$$
\begin{array}{lr}(T*V)*W=\\
&\hspace{-1.5in}\sum {\mbox {Tub}_{r+s+z}(u_1,\dots v_r, v_1{
+n},\dots ,w_1{ +n+m},\dots ,x_{\sigma^{-1} (1)},\dots
,x_{\sigma^{-1} (n+m+p{-}(r+s+z))})}\\~~~~~~~~~~~~~~~~~~ =
T*(V*W),\end{array}$$
  where we sum over all permutations $\sigma
\in {\mbox {Sh}(n{-}r,{m}-s,p{-}s)}$ and
$$x_i :=\left\{\begin{array}{lr} u_{r+i},&{\rm for}\ 1\leq i\leq n{-}r,\\
v_{s+i+r{-}n}{ +n},&{\rm for}\ n{-}r< i\leq n+m{-}r{-}s,\\
w_{z+i+r+s{-}n{-}m}{ +n+m},&{\rm for}\ n+m{-}r{-}s < i \leq
n+m+p{-}r{-}s{-}z,\end{array}\right\}$$ which ends the proof.
\end{proof}
\bigskip

\bigskip
\section{Questions}\label{quest}

 There are well-known
extensions of $\ssym$ and $\y Sym$ to Hopf algebras based on all of
the faces of the permutohedron and associahedron. These were first
described by Chapoton, in \cite{chap}, along with a Hopf algebra of
the faces of the hypercubes. We realize the first two Hopf algebras
using the graph tubings in \cite{ForSpr:2010}. They are denoted $\s
S\widetilde{ym}$ and $\y S\widetilde{ym}$ respectively, and so we
refer to Chapoton's algebra of the faces of the cube as $\c
S\widetilde{ym}$.

Immediately the question is raised: how might we relate the
coalgebra $\s S\widetilde{ym} \circ \c S\widetilde{ym}$ to our
algebra of stellohedra faces? How can we relate the Hopf algebra on
$\widetilde{C/C}$ for $C$ the corollas, thus an algebra on the faces
of the hypercube, to Chapoton's Hopf algebra $\c S\widetilde{ym}$?

Further questions arise as we look at the other polytopes in our set
of 12 sequences. (Of course, recall that 4 of them are only
conjecturally convex polytope sequences.) For instance, via our
bijection there is a Hopf algebra based on the weakly ordered
forests grafted to corollas--which we would like to characterize in
terms of known examples.

\section{Acknowledgements}
The authors thank a referee who made many good suggestions about an
early version. The third author is supported via Fondecyt Regular, Project 1171209.
The second author would like to thank the AMS and the
Mathematical Sciences Program of the National Security Agency for
supporting this research through grant H98230-14-0121.\footnote{This
manuscript is submitted for publication with the understanding that
the United States Government is authorized to reproduce and
distribute reprints.} The second author's specific position on the
NSA is published in \cite{freedom}. Suffice it to say here that the
second author appreciates NSA funding for open research and
education, but encourages reformers of the NSA who are working to
ensure that protections of civil liberties keep pace with
intelligence capabilities.

\bibliography{mybib}{}

\begin{thebibliography}{10}

\bibitem{AS-MR}
Marcelo Aguiar and Frank Sottile.
\newblock Structure of the {M}alvenuto-{R}eutenauer {H}opf algebra of
  permutations.
\newblock {\em Adv. Math.}, 191(2):225--275, 2005.

\bibitem{AS-LR}
Marcelo Aguiar and Frank Sottile.
\newblock Structure of the {L}oday-{R}onco {H}opf algebra of trees.
\newblock {\em J. Algebra}, 295(2):473--511, 2006.

\bibitem{ardila}
Federico Ardila and Jeffrey Doker.
\newblock Lifted generalized permutahedra and composition polynomials.
\newblock {\em Advances in Applied Mathematics}, 50:607--633, 2013.

\bibitem{Barry}
P.~Barry.
\newblock A {C}atalan transform and related transformations on integer
  sequences.
\newblock {\em Journal of Integer Sequences}, 8(2):3, 2005.

\bibitem{species}
F.~Bergeron, P.~Leroux, and G.~Labelle.
\newblock {\em Combinatorial Species and Tree-like Structures}.
\newblock Encyclopedia of Mathematics. Cambridge University Press, 1998.

\bibitem{BV1}
J.~M. Boardman and R.~M. Vogt.
\newblock {\em Homotopy invariant algebraic structures on topological spaces}.
\newblock Lecture Notes in Mathematics, Vol. 347. Springer-Verlag, Berlin,
  1973.

\bibitem{dev-carr}
Michael~P. Carr and Satyan~L. Devadoss.
\newblock Coxeter complexes and graph-associahedra.
\newblock {\em Topology Appl.}, 153(12):2155--2168, 2006.

\bibitem{chap}
Fr{\'e}d{\'e}ric Chapoton.
\newblock Big\`ebres diff\'erentielles gradu\'ees associ\'ees aux
  permuto\`edres, associa\`edres et hypercubes.
\newblock {\em Ann. Inst. Fourier (Grenoble)}, 50(4):1127--1153, 2000.

\bibitem{dev-forc}
Satyan Devadoss and Stefan Forcey.
\newblock Marked tubes and the graph multiplihedron.
\newblock {\em Algebr. Geom. Topol.}, 8(4):2081--2108, 2008.

\bibitem{dev-real}
Satyan~L. Devadoss.
\newblock A realization of graph associahedra.
\newblock {\em Discrete Math.}, 309(1):271--276, 2009.

\bibitem{devcube}
Satyan~L. Devadoss, T.~Heath, and C.~Vipismakul.
\newblock Deformations of bordered surfaces and convex polytopes.
\newblock {\em Notices of the AMS}, 58(1):530--541, 2011.

\bibitem{freedom}
S.~Forcey.
\newblock Dear {N}{S}{A}: Long-term security depends on freedom.
\newblock {\em Notices of the AMS}, 61(1), 2014.

\bibitem{multi}
Stefan Forcey.
\newblock Convex hull realizations of the multiplihedra.
\newblock {\em Topology Appl.}, 156(2):326--347, 2008.

\bibitem{FLS1}
Stefan Forcey, Aaron Lauve, and Frank Sottile.
\newblock New hopf structures on binary trees.
\newblock {\em DMTCS Proc. FPSAC 21}, pages 411--420, 2009.

\bibitem{FLS3}
Stefan Forcey, Aaron Lauve, and Frank Sottile.
\newblock Cofree compositions of coalgebras.
\newblock {\em Ann. Comb.}, 17(1):105--130, 2013.

\bibitem{forc-spring}
Stefan Forcey and Derriell Springfield.
\newblock Geometric combinatorial algebras: cyclohedron and simplex.
\newblock {\em preprint}.

\bibitem{ForSpr:2010}
Stefan Forcey and Derriell Springfield.
\newblock Geometric combinatorial algebras: cyclohedron and simplex.
\newblock {\em J. Alg. Combin.}, 32(4):597--627, 2010.

\bibitem{LR}
Jean-Louis Loday and Mar{\'{\i}}a~O. Ronco.
\newblock Hopf algebra of the planar binary trees.
\newblock {\em Adv. Math.}, 139(2):293--309, 1998.

\bibitem{LR2}
Jean-Louis Loday and Mar{\'{\i}}a~O. Ronco.
\newblock Order structure on the algebra of permutations and of planar binary
  trees.
\newblock {\em J. Alg. Combinatorics}, 15:253--270, 2002.

\bibitem{LR3}
Jean-Louis Loday and Mar{\'{\i}}a~O. Ronco.
\newblock Trialgebras and families of polytopes.
\newblock In {\em Homotopy theory: Relations with Algebraic Geometry, Group
  Cohomology and Algebraic {$K$}-Theory}, volume 346 of {\em Contemp. Math.},
  pages 369--398. Amer. Math. Soc., Providence, RI, 2004.

\bibitem{MR}
Clauda Malvenuto and Christophe Reutenauer.
\newblock Duality between quasi-symmetric functions and the {S}olomon descent
  algebra.
\newblock {\em J. Algebra}, 177(3):967--982, 1995.

\bibitem{pilaud2}
Thibault Manneville and Vincent Pilaud.
\newblock Compatibility fans for graphical nested complexes.
\newblock {\em Journal of Combinatorial Theory, Series A}, 150:36--107, 2017.

\bibitem{vincent2}
Vincent Pilaud.
\newblock Signed tree associahedra.
\newblock {\em DMTCS Proc. FPSAC 26}, pages 309--320, 2014.

\bibitem{post2}
Alex Postnikov, Victor Reiner, and Lauren Williams.
\newblock Faces of generalized permutohedra.
\newblock {\em Doc. Math.}, 13:207--273, 2008.

\bibitem{post}
Alexander Postnikov.
\newblock Permutohedra, associahedra, and beyond.
\newblock {\em Int. Math. Res. Not. IMRN}, (6):1026--1106, 2009.

\bibitem{Slo:oeis}
N.~J.~A. Sloane.
\newblock The on-line encyclopedia of integer sequences.
\newblock published electronically at {\tt www.oeis.org}.

\bibitem{tonks}
Andy Tonks.
\newblock Relating the associahedron and the permutohedron.
\newblock In {\em Operads: {P}roceedings of {R}enaissance {C}onferences
  ({H}artford, {CT}/{L}uminy, 1995)}, volume 202 of {\em Contemp. Math.}, pages
  33--36. Amer. Math. Soc., Providence, RI, 1997.

\bibitem{Zoque}
E.~Zoque.
\newblock Partitions, pairs of trees and {C}atalan numbers.
\newblock {\em arXiv preprint arXiv:1006.5706}, 2010.

\end{thebibliography}
\bibliographystyle{plain}

\end{document}